\numberwithin{equation}{section}
\newtheorem{theorem}{Theorem}[section]
\newtheorem{lemma}[theorem]{Lemma}
\newtheorem{example}[theorem]{Example}
\newtheorem{remark}[theorem]{Remark}
\newtheorem{corollary}[theorem]{Corollary}
\newcommand{\X}{\mathbb{R}^{n_1+n_2}}
\DeclarePairedDelimiter\floor{\lfloor}{\rfloor}
\begin{document}
\title[General Grushin Pseudo-multipliers]{On some operator-valued Fourier pseudo-multipliers associated to Grushin operators}

\author[S. Bagchi, R. Basak, R. Garg and A. Ghosh]
{Sayan Bagchi \and Riju Basak \and Rahul Garg \and Abhishek Ghosh}

\address[S. Bagchi]{Department of Mathematics and Statistics, Indian Institute of Science Education and Research Kolkata, Mohanpur--741246, West Bengal, India.}
\email{sayan.bagchi@iiserkol.ac.in}

\address[R. Basak]{Department of Mathematics, Indian Institute of Science Education and Research Bhopal, Bhopal--462066, Madhya Pradesh, India.}
\curraddr{Department of Mathematics, Indian Institute of Science Education and Research Mohali, S.A.S. Nagar- 140306, Punjab, India.}
\email{rijubasak52@gmail.com}

\address[R. Garg]{Department of Mathematics, Indian Institute of Science Education and Research Bhopal, Bhopal--462066, Madhya Pradesh, India.}
\email{rahulgarg@iiserb.ac.in}

\address[A. Ghosh]{Institute of Mathematics, Polish Academy of Sciences, Ul. \'Sniadeckich 8, 00-656 Warsaw, Poland.} 
\email{agosh@impan.pl}

\subjclass[2020]{58J40, 43A85, 42B25}

\keywords{Hermite operator, Grushin operator, spectral multipliers, pseudo-differential operators, maximal operator, sparse operators, weighted boundedness}

\begin{abstract}
This is a continuation of our work \cite{BBGG-1, BBGG-2} where we have initiated the study of sparse domination and quantitative weighted estimates for Grushin pseudo-multipliers. In this article, we further extend this analysis to study analogous estimates for a family of operator-valued Fourier pseudo-multipliers associated to Grushin operators $G = - \Delta_{x^{\prime}} - |x'|^2 \Delta_{x^{\prime \prime}}$ on $\mathbb{R}^{n_1+n_2}.$ 
\end{abstract}
\maketitle

\tableofcontents

\section{Introduction}
Multiplier theorems are always of deep interest in analysis due to their vast applicability in various problems pertaining to harmonic analysis and partial differential equations. In the present article we study general multiplier operators in the context of Grushin operators. Denoting points in $\X$ by $x=(x^{\prime}, x^{\prime\prime}) \in \mathbb{R}^{n_1} \times \mathbb{R}^{n_2}$, consider the Grushin operator
$$G = - \Delta_{x^{\prime}} - |x'|^2 \Delta_{x^{\prime \prime}}$$
on $\X$. Given a bounded function $m$ on $\mathbb{R}_{+}$, spectral multiplier $m(G)$ is the operator defined via the spectral integration associated to the spectral measure of $G$. Grushin operators were introduced in \cite{Grushin70}. The operator $G$ is degenerate elliptic along the $n_2$-dimensional plane $\{0\}\times \mathbb{R}^{n_2}$. It is studied in various contexts related to Dirichlet problems in weighted Sobolev spaces, free boundary problems in partial differential equations etc. In particular, when $n_2=1$, it is closely connected to the sub-Laplacian on the Heisenberg group (see, for example, \cite{Dziubanski-Jotsaroop-Hardy-BMO-Grushin}). In fact, it is same as the sub-Laplacian acting on cylindrically symmetric functions. Denoting by $d$ the Grushin control distance (see \eqref{def:grushin-control-metric}), we write $B(x, r) := \{y \in \mathbb{R}^{n_1+n_2} : d(x, y)< r\}$ for the ball of radius r, centered at $x$, and $|B(x, r)|$ stands for the (Lebesgue) measure of the ball $B(x, r)$. It is known that $\left|B(x,r)\right| \sim r^{n_1 + n_2} \max\{r, |x^{\prime}|\}^{ n_2}$, which implies that $|B(x, sr)| \lesssim_{n_1, n_2} (1+s)^{Q} |B(x,r)|$ for every $s>0$, where $Q = n_{1}+2 n_{2}$. Thus, $(\mathbb{R}^{n_1+n_2}, d, |\cdot|)$ is a homogeneous metric space with homogeneous dimension $Q=n_1+2n_2$. It then follows from the work of Duong, Ouhabaz and Sikora  \cite{DuongOuhabazSikoraWeightedPlancherel2002JFA} on more general homogeneous spaces that if 
\begin{align} \label{Mult-condn}
\|\eta\, \delta_{r}m\|_{W^{2, s}}<\infty
\end{align}
with $s>Q/2$, where $\eta$ is a non-zero cut-off function, then $m(G)$ is bounded on $L^p(\X)$ for $1<p<\infty$ and is of weak-type $(1, 1)$. In \cite{JotsaroopSanjayThangaveluRieszTransformsGrushin}, multiplier theorem was studied using $R$-boundedness principle. Sharp multiplier theorem for the Grushin operator (that is, $L^p$ for $1<p<\infty$ and weak $(1,1)$ boundedness of $m(G)$ with $m$ satisfying condition \eqref{Mult-condn} for only $s>\frac{n_1+n_2}{2}$) was established in the works of Martini--Sikora \cite{MartiniSikoraGrushinMRL} and Martini--M\"uller \cite{MartiniMullerGrushinRevistaMath}, see also \cite{Martini-JFAA-2022, Martini-IMRN-2022} for other interesting results related to multiplier theorems for the Grushin operator. Let us also mention that related multiplier theorems in the context of weighted $L^p$ spaces were studied in \cite{DuongSikoraYanJFA2011}. 

The purpose of this article is to study sharp weighted estimates for a large class of interesting operators related to Grushin operators but which are not covered by the methods developed in \cite{DuongOuhabazSikoraWeightedPlancherel2002JFA, DuongSikoraYanJFA2011}. In order to motivate the class of operators that we study, let us first recall the relation between Grushin (pseudo) multipliers and Hermite (pseudo) multipliers. Let $f^\lambda$ denote the inverse Fourier transform of a nice function $f$ on $\X$ in $x^{\prime \prime}$-variable, given by $$f^\lambda(x^{\prime}) = (2 \pi)^{- n_2} \int_{\mathbb{R}^{n_2}} f (x^{\prime}, x^{\prime \prime}) e^{i \lambda \cdot x^{\prime \prime}} \, d x^{\prime \prime}.$$
Then one can write the Grushin operator as follows
\begin{align*} 
Gf(x) = \int_{\mathbb{R}^{n_2}} e^{-i \lambda \cdot x^{\prime \prime}} \left(H(\lambda) f^{\lambda} \right) (x^{\prime}) \, d\lambda, 
\end{align*} 
where $H(\lambda) = - \Delta_{x'} + |\lambda|^2 |x'|^2$ is the scaled Hermite operator. As in \cite{Bagchi-Garg-1}, one then defines Grushin pseudo-multipliers by 
\begin{align*} 
m(x, G)f(x) = \int_{\mathbb{R}^{n_2}} e^{-i \lambda \cdot x^{\prime \prime}} \left(m(x, H(\lambda)) f^{\lambda} \right) (x^{\prime}) \, d\lambda,
\end{align*} 
where $m(x, H(\lambda))$ is the Hermite pseudo-multiplier introduced in \cite{EppersonHermitePseudo} and subsequently studied in \cite{BagchiThangaveluHermitePseudo}. Recently, Grushin pseudo-multipliers and their quantitative weighted estimates are investigated systematically in \cite{Bagchi-Garg-1,BBGG-1,BBGG-2}. 

\medskip 
\begin{itemize}
\item The above can be thought of our point of departure for this article and it is natural to consider more general operators of the following form: 
\begin{align} \label{def:Theta-operator} 
\Theta f(x) = \int_{\mathbb{R}^{n_2}} e^{-i \lambda \cdot x^{\prime \prime}} \left( \Theta(x'', \lambda) f^{\lambda} \right) (x^{\prime}) \, d\lambda.
\end{align} 
Our analysis of these ``\textit{operator-valued Fourier pseudo-multipliers}" is also motivated by the works of \cite{MauceriWeylTransformJFA80,BagchiThangaveluHermitePseudo} which we shall explain shortly. In this direction, we study quantitative weighted estimates with some natural ``derivative" conditions on $\Theta(x'', \lambda)$ by employing the modern machinery of sparse domination.  

\medskip 
\item In particular, given any $\mu_{0}\in \mathbb{N}^{n_1},$ consider the following ``\textit{shifted Grushin pseudo-multipliers}" associated to the joint functional calculus of $G$ (for details, see Subsection \ref{subsec:main-results-joint-functional-calculus}),
\begin{align} \label{def:pseudo-mult-op-with-conj-shifts} 
T f(x)  = \int_{\mathbb{R}^{n_2}} e^{-i\lambda \cdot x''} \sum_{\mu} m(x, (2 (\mu - \mu_0) + \tilde{1}) |\lambda|, \lambda) \left( f^{\lambda}, \Phi^{\lambda}_{\mu} \right) \Phi^{\lambda}_{\mu}(x') \, d\lambda.
\end{align} 
These operators played a very important role in the analysis of Grushin (pseudo) multipliers in \cite{Bagchi-Garg-1, MartiniMullerGrushinRevistaMath}. Since the shifted Hermite multipliers do not behave well in the $L^p$ scale for $p\neq 2$, it is a difficult task to obtain $L^p$-estimates for these operators and studying this is one of our primary goals in this article. In a limited case, we accomplish this as an application of our analysis of general operators of the form \eqref{def:Theta-operator}. It is not obvious to obtain these estimates for shifted Grushin pseudo-multipliers from those of Grushin pseudo-multipliers, see Remark~\ref{rem:shift-vs-joint-spectral} for more details.
\end{itemize}

In order to motivate our results, let us first recall some relevant literature. In \cite{MauceriWeylTransformJFA80}, Mauceri studied Mihlin--H\"ormander-type multiplier theorem for Weyl multipliers with conditions given in terms of certain non-commutative derivatives of the operator $M \in B(L^2(\mathbb{R}^n))$. On the other hand, for general Fourier multipliers on the Heisenberg group, analogous Mihlin--H\"ormander-type conditions were studied in \cite{De-Michele-Mauceri-Heisenberg79,Chin-Cheng-Lin-Heisenberg,BagchiFourierMultipliersHeisenbergStudia}. Inspired by the work of \cite{MauceriWeylTransformJFA80}, the first author and S. Thangavelu \cite{BagchiThangaveluHermitePseudo} studied some weighted $L^p$-boundedness results for a general class of operators on $L^2(\mathbb{R}^n)$. Our assumptions are in fact motivated in part by those of \cite{BagchiThangaveluHermitePseudo}, and therefore before stating our assumptions and results in detail let us briefly recall those of \cite{BagchiThangaveluHermitePseudo}. For the same, we need to introduce the non-commutative derivatives $\delta_j$ and $\bar{\delta}_j$ of operators $M$ on $L^2 (\mathbb{R}^n) $ which are defined by 
$$ \delta_j M = [M, A_j] \quad \text{and} \quad \bar{\delta}_j M = [A_j^*, M], $$
for $1 \leq j \leq n$, where $A_j$ and $A_j^*$ are the annihilation and creation operators 
$$A_j = \frac{\partial}{\partial x_j} +  x_j,\,\,\text{and}\,\, A_j^* = - \frac{\partial}{\partial x_j} + x_j.$$
Let $P_k$ denote the Hermite projection of $L^2 (\mathbb{R}^n) $ onto the eigenspace of the Hermite operator $H = - \Delta + |x|^2$ with eigenvalues $2k+n$. For $j \in \mathbb{N}$, let $\chi_j$ stand for the dyadic projection $\chi_j = \sum_{2^{j} \leq 2 k + n < 2^{j+1}} P_k$. Denoting the kernel of a Hilbert-Schmidt operator $M$ on $L^2 (\mathbb{R}^n)$ by $M(x,y)$, following results were proved in \cite{BagchiThangaveluHermitePseudo}: 

Let $M \in \mathcal{B} \left( L^2 (\mathbb{R}^n) \right)$ be such that $\delta^{\nu} \bar{\delta}^{\gamma} M \in \mathcal{B} \left( L^2 (\mathbb{R}^n) \right)$ and 
\begin{align} \label{cond-Bagchi-Thangavelu-1}
\sup_{x \in \mathbb{R}^n} \int_{\mathbb{R}^n} \left| \left( \left( \delta^{\nu} \bar{\delta}^{\gamma} M \right) \chi_j \right) (x,y) \right|^2 dy \lesssim 2^{j \left( \frac{n}{2} - |\nu| - |\gamma| \right)}
\end{align}
for all $|\nu| + |\gamma| \leq N$, for some $N \in \mathbb{N}$. Then, $M$ extends to a bounded operator on $L^p(\mathbb{R}^{n}, w)$ for every $p \in (2, \infty)$ and $w \in A_{p/2} (\mathbb{R}^n)$ if $N \geq \floor*{n/2} + 1$, and if we increase the number of derivatives in assumption \eqref{cond-Bagchi-Thangavelu-1}, that is, $N \geq n + 1$, then $M$ maps boundedly $L^p(\mathbb{R}^{n}, w)$ to itself for every $p \in (1, \infty)$ and $w \in A_p(\mathbb{R}^n)$. This provides an analogue of the Mihlin-H\"ormander multiplier theorem in the operator valued setting, and as a consequence of this analysis, authors in \cite{BagchiThangaveluHermitePseudo} also obtained weighted estimates for Hermite pseudo-multipliers under suitable smoothness conditions on the symbol function. Hermite pseudo-multipliers were originally introduced in \cite{EppersonHermitePseudo}. Also, we refer the work of L. Weis \cite{Lutz-Weis-R-Boundedness-2001} where more general operator valued Fourier multipliers were studied. In particular, when $n_2=1$, L. Weis studied $L^p$-boundedness results with conditions given in terms of R-boundedness of the operator family $\Theta(\lambda)$.

\subsection{Main results on operator-valued Fourier pseudo-multipliers} \label{subsec:main-results-general-family}
In order to state our results, we need some more notations. For $\nu, \gamma \in \mathbb{N}^{n_1}$ and $\vartheta_1, \vartheta_2 \in \mathbb{N}^{n_2}$, let us denote by $\mathcal{L}(\lambda)_{\vartheta_1, \vartheta_2}^{\nu, \gamma}$ the differential operator given by 
\begin{align} \label{def:derivative-theta-family} 
\mathcal{L}(\lambda)_{\vartheta_1, \vartheta_2}^{\nu, \gamma} \Theta(x'', \lambda) =  |\lambda|^{- \left(\frac{|\nu|+|\gamma|}{2} + |\vartheta_2| \right)} \delta^{\nu}(\lambda) \bar{\delta}^{\gamma} (\lambda) \partial_{\lambda}^{\vartheta_1} \Theta(x'', \lambda), 
\end{align}
where the non-commutative derivatives $\delta_j(\lambda)$ and $\bar{\delta}_j(\lambda)$ are the scaled versions of earlier defined derivatives with $\lambda \neq 0$. More precisely, 
$$ \delta_j (\lambda) M = |\lambda|^{-1/2} [M, A_j(\lambda)] \quad \text{and} \quad \bar{\delta}_j (\lambda) M = |\lambda|^{-1/2} [A_j(\lambda)^*, M], $$ 
with $$A_j (\lambda) = \frac{\partial}{\partial x'_j} + |\lambda| x'_j,\,\, \text{and}\,\, A_j(\lambda)^* = - \frac{\partial}{\partial x'_j} + |\lambda| x'_j.$$ 

We assume that for some $L_0, N_0 \in \mathbb{N}$, operators $ \mathcal{L}(\lambda)_{\vartheta_1, \vartheta_2}^{ \nu, \gamma} x'^{\alpha_0} \partial^{\beta_0}_{x''} \Theta(x'', \lambda) \in \mathcal{B} \left( L^2 \left( \mathbb{R}^{n_1} \right) \right)$ for all $|\nu + \gamma| + |\vartheta_1 + \vartheta_2| + |\alpha_0| \leq L_{0}$ and $|\alpha_0| = |\beta_0| \leq N_0$, and that they satisfy conditions \eqref{def:grushin-kernel-Maucheri-hormander-cond1-L0-N0} and \eqref{def:grushin-kernel-Maucheri-hormander-cond2-L0-N0} which we shall shortly describe. We would like to point out that $x'_l \partial_{x''_k}$ are part of the first order gradient vector field associated to the Grushin operator (see \eqref{first-order-grad} and \eqref{first-order-grad-vector}), and therefore the appearance of $x'^{\alpha_0} \partial^{\beta_0}_{x''}$ (for various choices of $|\alpha_0| = |\beta_0|$) in the derivative assumptions is natural. The exact choice of $L_0$ and $N_0$ will appear in the statement of the main results. With $\lambda \neq 0$, for each $j \in \mathbb{N}$, let us denote by $\chi_{j}(\lambda)$ the scaled Hermite Projection operator defined by 
\begin{align} \label{def:Hermite-projection-operator-chi-lambda}
\chi_{j}(\lambda) = 
\begin{cases}
\sum\limits_{(2 k + n_1) |\lambda| < 1} P_k (\lambda) & \textup{ if } j = 0 \\
\sum\limits_{2^{j-1} \leq (2 k + n_1) |\lambda| < 2^j} P_k (\lambda) & \textup{ if } j \in \mathbb{N} \setminus \{0\}, 
\end{cases}
\end{align}
where $P_k (\lambda)$ stands for the Hermite projection of $L^2 (\mathbb{R}^{n_1}) $ onto the eigenspace of the scaled Hermite operator $H (\lambda) = - \Delta_{x'} + |\lambda|^2 |x'|^2$ with eigenvalues $(2k+n_1) |\lambda|$. Since, $\chi_j (\lambda)$ are finite rank operators on $L^2 (\mathbb{R}^{n_1})$ for any $\lambda \neq 0$, hence they are Hilbert-Schmidt operators. Consequently, for any bounded operator $T$ on $L^2 \left( \mathbb{R}^{n_1}\right)$, $T \chi_{j}(\lambda)$ is Hilbert-Schmidt, and therefore has an integral kernel. Now, for each fixed $x^{\prime} \in \mathbb{R}^{n_1}$ and $j \in \mathbb{N}$, set
\begin{align} \label{def:set-Cj(x-prime)}
\mathfrak{E}_{j} (x^{\prime}) = \left\{ y^{\prime} \in \mathbb{R}^{n_1} : |x'|+|y'| \geq 2^{-j/2} \right\}. 
\end{align} 

We consider the following conditions on kernels of the above mentioned operators: 
\newcommand{\KCKCaa}{
\begin{align} 
& |B(x, 2^{-j/2})| \int_{\mathfrak{E}_{j} (x^{\prime})} \int_{\mathbb{R}^{n_2}} \left| \frac{ \left( \left( \mathcal{L}(\lambda)_{\vartheta_1, \vartheta_2}^{ \nu, \gamma} {x'}^{\alpha_0} \partial^{\beta_0}_{x''} \Theta(x'', \lambda) \right) \chi_{j}(\lambda)\right)(x^{\prime}, y^{\prime})}{(|x^{\prime}|+|y^{\prime}|)^{|\vartheta_1 + \vartheta_2|}} \right|^2 d\lambda \, dy^{\prime} \tag{$KC1$-$L_0$-$N_0$} \label{def:grushin-kernel-Maucheri-hormander-cond1-L0-N0} \\ 
\nonumber & \quad \lesssim_{L_0, N_0} 2^{j |\beta_0|} 2^{-j\left( |\nu + \gamma| + |\vartheta_1 + \vartheta_2| \right)}, \\ 
\nonumber & \textup{for all } |\nu + \gamma| + |\vartheta_1 + \vartheta_2| + |\beta_0| \leq L_0 \, \textup{ and } |\alpha_0| = |\beta_0| \leq N_0, \\ 
& |B(x, 2^{-j/2})| \int_{\mathfrak{E}_{j} (x^{\prime})^c} \int_{\mathbb{R}^{n_2}} \left| \left( \left( \mathcal{L}(\lambda)_{\vartheta_1, \vartheta_2}^{\nu, \gamma} {x'}^{\alpha_0} \partial^{\beta_0}_{x''} \Theta(x'', \lambda) \right)  \chi_{j}(\lambda) \right) (x^{\prime}, y^{\prime}) \right|^2 d\lambda \, dy' \label{def:grushin-kernel-Maucheri-hormander-cond2-L0-N0} \tag{$KC2$-$L_0$-$N_0$} \\ 
\nonumber & \quad \lesssim_{L_0, N_0} 2^{j |\beta_0|} 2^{-j\left( |\nu + \gamma| + 2 |\vartheta_1 + \vartheta_2| \right)}, \\ 
\nonumber & \textup{for all } |\nu + \gamma| + 2 |\vartheta_1 + \vartheta_2| + |\beta_0| \leq L_0 \, \textup{ and } |\alpha_0| = |\beta_0| \leq N_0.
\end{align}
}
\begin{KCKCaa}
\end{KCKCaa}

Before proceeding further, we have the following remark on above conditions, and their usefulness. 

\begin{remark} \label{rem:main-conditions-discussion}
In proving multiplier theorems for Grushin operators, authors of \cite{MartiniSikoraGrushinMRL,MartiniMullerGrushinRevistaMath} made use of the general approach developed in \cite{DuongOuhabazSikoraWeightedPlancherel2002JFA} for spectral multipliers on Homogeneous spaces. One of the most important tools in these works is establishing suitable weighted Plancherel estimates. Now, the control distance function related to the Grushin operator is very complicated, and therefore it is extremely challenging to establish weighted Plancherel theorem for the Grushin multiplier operators via the asymptotic estimates of the control distance. A main obstacle is the increasing singularity near origin as we take $\lambda$-derivatives of the multiplier symbol. This fact can also be seen in the analysis of \cite{Bagchi-Garg-1}. If the operator is a spectral (pseudo) multiplier, this issue can be bypassed using the method of \cite{DuongOuhabazSikoraWeightedPlancherel2002JFA} (as also done in \cite{BBGG-1,BBGG-2}). Since we are dealing with a more general operator, there seems to be no obvious way to avoid the use of precise asymptotics of the control distance function. This indeed forces us to consider only those operators which behave well near $\lambda=0$, and we do so by taking suitable inverse powers of $|\lambda|$ in the definition \eqref{def:derivative-theta-family} of the differential operator $\mathcal{L}(\lambda)_{\vartheta_1, \vartheta_2}^{\nu, \gamma}$. Hence, while we cannot fully recover the results for spectral (pseudo) multipliers from our results, our technique allows us to analyse some interesting class of operators which are not covered in the set-up of spectral (pseudo) multipliers. One such class is of shifted Grushin pseudo-multipliers, given by  \eqref{def:pseudo-mult-op-with-conj-shifts}. 
\end{remark}

Moving on, the asymptotic behaviour of the Grushin control distance is well understood, and can be described in two different regimes of $\mathbb{R}^{n_1 + n_2}$ (see \eqref{def:grushin-control-metric}). With that, the proof of Theorem $1.15$ (based on estimates of Proposition $4.1$, Lemma $4.5$ and Proposition $5.1$) of \cite{Bagchi-Garg-1} motivates us to consider the set of conditions \eqref{def:grushin-kernel-Maucheri-hormander-cond1-L0-N0} and \eqref{def:grushin-kernel-Maucheri-hormander-cond2-L0-N0} with respect to $\mathfrak{E}_{j} (x^{\prime})$ as given in \eqref{def:set-Cj(x-prime)}. It shall become more transparent in Section \ref{sec:mauceri-type-weighted-kernel-estimates} that the formulation of operator $\Theta$ (given by \eqref{def:Theta-operator}) is inspired by the work of \cite{Bagchi-Garg-1}, where the action of the Grushin control distance on integral kernels associated to Grushin (pseudo) multipliers was studied. We now state our main results. Our first result addresses end-point weak type estimates for the operator $\Theta$.
\begin{theorem} \label{thm:Mauceri-kernel-a=0-weak-11}
Let $\Theta$ satisfies conditions \eqref{def:grushin-kernel-Maucheri-hormander-cond1-L0-N0} and \eqref{def:grushin-kernel-Maucheri-hormander-cond2-L0-N0} for $L_0 = Q + 2$ and $N_0 = 0$. Then the operator $\Theta$ given by \eqref{def:Theta-operator} is of weak type $(1,1)$. 
\end{theorem}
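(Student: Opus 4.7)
The plan is to execute a Calderón-Zygmund argument in the Grushin homogeneous space $(\X, d, |\cdot|)$, preceded by a spectral dyadic decomposition in the scaled Hermite projections. Introduce
$$ \Theta_j f(x) := \int_{\mathbb{R}^{n_2}} e^{-i \lambda \cdot x''} \left( \Theta(x'', \lambda) \chi_j(\lambda) f^{\lambda} \right)(x') \, d\lambda, \qquad j \geq 0, $$
so that $\Theta = \sum_j \Theta_j$ and each $\Theta_j$ carries the integral kernel
$$ K_j(x,y) = (2\pi)^{-n_2} \int_{\mathbb{R}^{n_2}} e^{-i \lambda \cdot (x''-y'')} K_{\Theta(x'',\lambda) \chi_j(\lambda)}(x',y') \, d\lambda. $$
Adding the base case (all indices zero) of \eqref{def:grushin-kernel-Maucheri-hormander-cond1-L0-N0} and \eqref{def:grushin-kernel-Maucheri-hormander-cond2-L0-N0} produces a uniform weighted-Plancherel bound on $\Theta(x'', \lambda) \chi_j(\lambda)$ which, combined with the spectral orthogonality of distinct $\chi_j(\lambda)$, yields $L^2(\X)$-boundedness of $\Theta$.

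Granted $L^2$-boundedness, the standard Calderón-Zygmund machinery on the homogeneous space $(\X, d, |\cdot|)$ reduces the weak-type $(1,1)$ bound to the integrated kernel estimate
\begin{equation} \label{eq:plan-weak11-hor}
\sup_{y \in B} \int_{\{d(x, y_B) \geq 2 r_B\}} \bigl| K_\Theta(x, y) - K_\Theta(x, y_B) \bigr| \, dx \lesssim 1,
\end{equation}
uniform over Grushin balls $B = B(y_B, r_B)$. Indeed, once \eqref{eq:plan-weak11-hor} is in hand, one decomposes $f \in L^1(\X) \cap L^2(\X)$ at level $\alpha$ as $f = g + \sum_B b_B$ in the usual way (each $b_B$ has mean zero and is supported in $B$, $\|b_B\|_1 \lesssim \alpha |B|$, $\sum_B |B| \lesssim \alpha^{-1} \|f\|_1$), bounds the good part $g$ via the $L^2$-estimate and Tchebychev, and exploits $\int b_B = 0$ on the bad part to conclude.

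Proving \eqref{eq:plan-weak11-hor} is the main obstacle. I would tie the spectral scale $j$ to the ball radius via $2^{-j/2} \sim r_B$ and write $K_\Theta - K_\Theta(\cdot, y_B) = \sum_j (K_j - K_j(\cdot, y_B))$. The spatial integration is then split according to the dichotomy $\mathfrak{E}_{j}(x^{\prime})$ versus $\mathfrak{E}_{j}(x^{\prime})^c$, which is exactly the split forced by the two asymptotic regimes of $|B(x, 2^{-j/2})|$ (anisotropic when $|x'| \geq 2^{-j/2}$, isotropic when $|x'| < 2^{-j/2}$). In each region Cauchy-Schwarz against the weight $|B(x, 2^{-j/2})|^{1/2}$ in the spatial variables upgrades the $L^2$-kernel bounds \eqref{def:grushin-kernel-Maucheri-hormander-cond1-L0-N0} and \eqref{def:grushin-kernel-Maucheri-hormander-cond2-L0-N0} to $L^1$ estimates: the $\partial_{\lambda}^{\vartheta_1}$ derivatives arise from integration by parts in the oscillatory factor $e^{-i \lambda \cdot (x'' - y'')}$ and yield decay in $|x'' - y''|$; the non-commutative derivatives $\delta^{\nu}, \bar{\delta}^{\gamma}$ generate decay in $|x'|$ and $|y'|$ through the structure of the scaled creation and annihilation operators $A_j(\lambda), A_j(\lambda)^*$; and the inverse powers of $|\lambda|$ in \eqref{def:derivative-theta-family} are exactly what stabilise the bounds as $\lambda \to 0$. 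Trading a small number of derivatives for $\epsilon$-decay in $2^{-j/2}/d(x, y_B)$ and summing the resulting geometric series in $j$, followed by integration over the dyadic annuli $2^{k} r_B \leq d(x, y_B) < 2^{k+1} r_B$, produces \eqref{eq:plan-weak11-hor} precisely when $L_0 \geq Q + 2$. The argument essentially adapts the kernel analysis of Proposition~4.1, Lemma~4.5 and Proposition~5.1 of \cite{Bagchi-Garg-1} from (spectral) Grushin pseudo-multipliers to the operator-valued symbols $\Theta(x'', \lambda)$; the choice $L_0 = Q + 2$ is the integer threshold beyond half of the homogeneous dimension demanded by these Cauchy-Schwarz-based arguments.
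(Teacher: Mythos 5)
Your overall route --- a dyadic spectral decomposition followed by a Calder\'on--Zygmund argument on the homogeneous space, with the weak type $(1,1)$ bound reduced to a H\"ormander-type integral condition on the kernel --- is in substance the route the paper takes: the paper verifies the pointwise conditions \eqref{cond:General-hypo-sup} and \eqref{cond:General-hypo-y-grad-sup} for $R_0 = Q+2$ and invokes Theorem \ref{thm:weak-type-bound-operator}, which encapsulates exactly that CZ argument. However, two steps of your plan have genuine gaps. First, your claim that the base case of \eqref{def:grushin-kernel-Maucheri-hormander-cond1-L0-N0} and \eqref{def:grushin-kernel-Maucheri-hormander-cond2-L0-N0} together with ``spectral orthogonality of distinct $\chi_j(\lambda)$'' yields $L^2$-boundedness of $\Theta$ does not hold. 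Those conditions give, for each fixed $x$, a bound on $\int\int |(\Theta(x'',\lambda)\chi_j(\lambda))(x',y')|^2\,d\lambda\,dy'$, hence only an $L^2 \to$ weighted $L^\infty$ bound for each block; orthogonality of the inputs $\chi_j(G)f$ says nothing about orthogonality of the outputs $\Theta_j f$ (the $x''$-dependence of $\Theta(x'',\lambda)$ mixes frequencies), and with $N_0=0$ the hypotheses contain no regularity in $x''$ at all, so no Cotlar--Stein or almost-orthogonality argument is available. This is precisely why the paper treats $\Theta \in \mathcal{B}\left(L^2(\mathbb{R}^{n_1+n_2})\right)$ as a standing hypothesis of the framework (it is explicit in Theorems \ref{thm:joint-shift-multi-weak-type}--\ref{thm:joint-shift-multi-sparse-more-derivative}); you must assume it, not derive it.

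Second, and more importantly, the core technical content --- converting \eqref{def:grushin-kernel-Maucheri-hormander-cond1-L0-N0} and \eqref{def:grushin-kernel-Maucheri-hormander-cond2-L0-N0} into estimates usable in your H\"ormander condition --- is not addressed, and the step as you describe it points in the wrong direction. The hypotheses are $L^2$ bounds in $(y',\lambda)$ for \emph{fixed} $x$, so the Cauchy--Schwarz upgrade you sketch yields bounds of the form $\sup_x \int |\Theta_j(x,y)|\,dy \lesssim 1$, whereas the CZ bad-part estimate requires integration in $x$ for fixed $y$, together with regularity of $\Theta_j(x,\cdot)$ in the second variable. The paper bridges this asymmetry by proving the two-sided pointwise bounds $d(x,y)^{2L}|\Theta_j(x,y)| \lesssim 2^{-jL} |B(x,2^{-j/2})|^{-1/2} |B(y,2^{-j/2})|^{-1/2}$ (Lemma \ref{lem:General-Sup-kernel-estimate}) and the analogous bound for $X_y \Theta_j(x,y)$ with an extra factor $2^{j/2}$ (Lemma \ref{General-Sup-kernel-grad-estimate-y-variable}), which can then be integrated in $x$ for fixed $y$ over dyadic annuli. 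Their proofs are the real work: inserting the projections $\chi_{j'}(\lambda)$ between the $\Theta$-factor and the $S_j$-factor and using Lemma \ref{lem:finite-shifts} to restrict to $|j-j'| \lesssim L$, splitting into $\mathfrak{E}_j(x')$ and its complement, regularizing with $\Upsilon^q$ to justify integration by parts near $\lambda = 0$, and, for the $y$-regularity with $N_0 = 0$, rewriting $\partial_{y'_k}$ and $y'_k \partial_{y''_l}$ through $A_k(\lambda)$ and $A_k(\lambda)^*$ acting on the right of $\Theta_j(x'',\lambda)$. It is in these pointwise (rather than $L^2$-averaged) estimates that the threshold $L_0 = Q+2$ actually enters --- they need roughly twice as many derivatives as the $L^2$-weighted Plancherel bounds --- so without supplying them, your geometric-series summation in $j$ cannot be carried out from the stated hypotheses.
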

We focus on stating sparse domination results at this point.  Let $\mathcal{S}$ denote a family of generalised-dyadic cubes (Christ's cubes) on the homogeneous space $(\mathbb{R}^{n_1+n_2}, d, |\cdot|)$. We say a collection of measurable sets $S \subset \mathcal{S}$ to be an $\eta$-sparse family, for some $0<\eta<1$, if for every member $\mathcal{Q}\in {S}$ there exists a set $E_{\mathcal{Q}} \subseteq \mathcal{Q}$ such that $|E_{Q}|\geq \eta |\mathcal{Q}|$. Corresponding to a sparse family ${S}$ and $1\leq r<\infty$, we define the sparse operator as follows: 
\begin{align} \label{def:Sparse-operator} 
\mathcal{A}_{r, S}f(x) = \sum_{\mathcal{Q}\in S} \left( \frac{1}{|\mathcal{Q}|} \int_{\mathcal{Q}}|f|^r \right)^{1/r} \chi_{\mathcal{Q}}(x).
\end{align} 
We simply write $\mathcal{A}_{S}$ for $\mathcal{A}_{1, S}$.

\begin{theorem} \label{thm:Mauceri-kernel-a=0}
Let $\Theta$ satisfies conditions \eqref{def:grushin-kernel-Maucheri-hormander-cond1-L0-N0} and \eqref{def:grushin-kernel-Maucheri-hormander-cond2-L0-N0} for $L_0 = \floor*{Q/2} + 3$ and $N_0 = 1$. Then, for every compactly supported bounded measurable function $f$ on $\mathbb{R}^{n_1+n_2}$, there exists a sparse family $S\subset \mathcal{S}$ such that
\begin{align*} 
|\Theta f(x)| \lesssim_{T} \mathcal{A}_{2, S}f(x),
\end{align*}
for almost every $x \in \mathbb{R}^{n_1+n_2}$. 
\end{theorem}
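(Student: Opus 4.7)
The plan is to reduce the pointwise sparse bound to a weak-type $(2,2)$ estimate for a grand maximal truncation operator attached to $\Theta$, and then invoke the sparse-domination principle on spaces of homogeneous type already used in \cite{BBGG-1,BBGG-2}. Concretely, defining
$$\mathcal{M}^{\#}_{\Theta} f(x) := \sup_{B\ni x}\,\esssup_{y\in B}\,\big|\Theta\big(f\chi_{\mathbb{R}^{n_1+n_2}\setminus \kappa B}\big)(y)\big|$$
for a suitable dilation constant $\kappa$ adapted to $(d,|\cdot|)$, the Lerner--Lorist sparse algorithm produces the required sparse family $S$ once one has $L^2$-boundedness of $\Theta$ together with a weak $(2,2)$ bound for $\mathcal{M}^{\#}_\Theta$. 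The $L^2$-bound for $\Theta$ itself follows quickly from Plancherel in $\lambda$ combined with the uniform $L^2(\mathbb{R}^{n_1})$-boundedness of $\Theta(x'',\lambda)$ already encoded in the zero-index case of \eqref{def:grushin-kernel-Maucheri-hormander-cond1-L0-N0}.

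To attack the weak $(2,2)$ bound I would dyadically decompose
$$\Theta f(x) = \sum_{j\geq 0} \Theta_j f(x), \qquad \Theta_j f(x)=\int_{\mathbb{R}^{n_2}} e^{-i\lambda\cdot x''}\big(\Theta(x'',\lambda)\chi_j(\lambda) f^\lambda\big)(x')\,d\lambda,$$
so that $\Theta_j$ is localised at Grushin scale $2^{-j/2}$ via the correspondence between the scaled Hermite spectrum $(2k+n_1)|\lambda|\sim 2^j$ and the control radius. The key intermediate step is to upgrade the abstract hypotheses \eqref{def:grushin-kernel-Maucheri-hormander-cond1-L0-N0} and \eqref{def:grushin-kernel-Maucheri-hormander-cond2-L0-N0} into weighted $L^2$ off-diagonal estimates of Plancherel type for the Schwartz kernel $K_{\Theta_j}(x,y)$, roughly of the shape
$$\sup_{x}\,|B(x,2^{-j/2})|\int_{d(x,y)\sim 2^k\cdot 2^{-j/2}} |K_{\Theta_j}(x,y)|^2\,dy \lesssim 2^{-kL_0},$$
with an extra gain $2^{j|\beta_0|}$ whenever the Grushin horizontal factor $x'^{\alpha_0}\partial^{\beta_0}_{x''}$ is inserted. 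The split $\mathbb{R}^{n_1}=\mathfrak{E}_j(x')\sqcup\mathfrak{E}_j(x')^c$ from \eqref{def:set-Cj(x-prime)} is tailored precisely for this, since it separates the two regimes of the Grushin distance (Heisenberg-like when $|x'|+|y'|\gtrsim 2^{-j/2}$, parabolic otherwise), each contributing a different weight.

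Granted the off-diagonal estimates, the weak $(2,2)$ bound for $\mathcal{M}^{\#}_\Theta$ then follows by the standard scheme: fix a ball $B$ of radius $r$, pick $j_0$ with $2^{-j_0/2}\sim r$, split $\Theta=\sum_{j<j_0}\Theta_j+\sum_{j\geq j_0}\Theta_j$, and sum geometrically over the dyadic annuli $\{d(x,y)\sim 2^k r\}$. Here the choice $L_0=\lfloor Q/2\rfloor+3$ is what produces enough decay $2^{-kL_0}$ to beat the homogeneous-dimension factor $2^{kQ}$ from volume doubling, while $N_0=1$ supplies exactly one horizontal Grushin derivative to control the oscillation of $\Theta(f\chi_{\kappa B^c})(y)$ as $y$ varies across $B$, via a mean-value argument using the first-order Grushin gradient $x'_l\partial_{x''_k}$.

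The main obstacle is the translation of the abstract assumptions \eqref{def:grushin-kernel-Maucheri-hormander-cond1-L0-N0}--\eqref{def:grushin-kernel-Maucheri-hormander-cond2-L0-N0} into genuine weighted $L^2$ Plancherel-type estimates in the Grushin distance, because this requires the full regime asymptotics $d(x,y)\sim |x'-y'|+\min\{|x''-y''|^{1/2},\,|x''-y''|/(|x'|+|y'|)\}$ together with the identity converting $x''$-moments on the spatial side into $\lambda$-derivatives on the Fourier side. This is precisely why both the weight $(|x'|+|y'|)^{|\vartheta_1+\vartheta_2|}$ and the unweighted form appear side-by-side in the two hypotheses; carrying out this regime analysis, already a pivotal step in \cite{Bagchi-Garg-1,BBGG-1,BBGG-2}, is the technical heart of the argument.
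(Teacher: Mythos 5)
Your high-level strategy is in fact the same route the paper takes: dyadic decomposition along the spectrum of $G$, weighted $L^2$ Plancherel-type kernel estimates plus a gradient estimate for the pieces, and then the Lerner--Ombrosi/Lorist sparse machinery \cite{Lerner-Ombrosi-pointwaise-sparse2020, Lorist-pointwaise-sparse2021}, which the paper does not reprove via your grand maximal truncation operator but simply invokes as the black box Theorem \ref{thm:main-sparse} from \cite{BBGG-1} (requiring \eqref{cond:General-hypo} and \eqref{cond:General-hypo-grad} for some $R_0>Q/2$). The genuine gap is that the entire content of the theorem lies in the step you explicitly defer as ``the technical heart'': passing from the hypotheses \eqref{def:grushin-kernel-Maucheri-hormander-cond1-L0-N0}--\eqref{def:grushin-kernel-Maucheri-hormander-cond2-L0-N0} with $L_0=\floor*{Q/2}+3$, $N_0=1$ to the estimates \eqref{cond:General-hypo} and \eqref{cond:General-hypo-grad} with $R_0=\floor*{Q/2}+2$. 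This is what Section \ref{sec:mauceri-type-weighted-kernel-estimates} does (Lemmas \ref{lem:General-Kernel-estimate}, \ref{lem:General-Kernel-estimate-lambda-singularity}, \ref{lem:General-Kernel-grad-estimate-x}): it needs the finite-shift Lemma \ref{lem:finite-shifts}, the insertion of $\chi_{j'}(\lambda)$ so as to factor each piece into a part controlled by the hypotheses times an auxiliary operator (of type $A^{1,x}_{k,\pm}$, $A^{2,x}$) whose operator norm is bounded through the pointwise kernel machinery of \cite{Bagchi-Garg-1} (Lemma \ref{weighted-kernel-estimate-3}), and the splitting over $\mathfrak{E}_j(x')$, $\mathfrak{E}_j(x')^c$ and the cones $\mathcal{D}_k^{\pm}$. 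None of this is supplied or replaced by an alternative argument in your proposal.

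Moreover, two steps as you describe them would fail. First, $L^2$-boundedness of $\Theta$ does not ``follow quickly from Plancherel in $\lambda$'': the family $\Theta(x'',\lambda)$ depends on $x''$, so $\Theta$ is an operator-valued \emph{pseudo}-multiplier and Plancherel in $x''$ is not available; global $L^2$-boundedness is part of the standing setup ($\Theta\in\mathcal{B}(L^2(\mathbb{R}^{n_1+n_2}))$), and in the shifted pseudo-multiplier applications it is an explicit hypothesis. Second, your conversion of $x''$-moments into $\lambda$-derivatives requires integration by parts in $\lambda$, which is incompatible with the sharp spectral cutoffs $\chi_j(\lambda)$ you use to define $\Theta_j$; the paper works with smooth Littlewood--Paley pieces $S_j=\psi_j(G)$ (legitimised by Remark \ref{choice:partition-of-unity}) and additionally inserts the regularisation $\Upsilon^q(\lambda)$ of \eqref{def:spectral-break-kappa-variable} to justify the integration by parts and to control the singularity at $\lambda=0$ --- an issue your sketch never addresses, although it is the very reason the hypotheses carry inverse powers of $|\lambda|$. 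Finally, the bookkeeping ``decay $2^{-kL_0}$ beating $2^{kQ}$'' is not the relevant count: what is needed is the weighted estimate with exponent $R_0>Q/2$, and the choice $L_0=\floor*{Q/2}+3$ (one more derivative than in \cite{BBGG-1}, because the kernel manipulations only produce powers of $d(x,y)$ in multiples of four, cf.\ Subsection \ref{subsec:proofs-mauceri-type-theorems}) is exactly what yields $\mathfrak{r}\le\floor*{Q/2}+2$ after interpolation/convexity.
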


Increasing the number of derivatives result in an improved pointwise sparse domination. 
\begin{theorem} \label{thm:Mauceri-kernel-a=0-more-derivative}
Let $\Theta$ satisfies conditions \eqref{def:grushin-kernel-Maucheri-hormander-cond1-L0-N0} and \eqref{def:grushin-kernel-Maucheri-hormander-cond2-L0-N0} for $L_0 = Q + 3$ and $N_0 = 1$. Then, for every compactly supported bounded measurable function $f$ on $\mathbb{R}^{n_1+n_2}$, there exists a sparse family $S\subset \mathcal{S}$ such that 
\begin{align*} 
| \Theta f(x)| \lesssim_{T} \mathcal{A}_{S} f(x),
\end{align*} 
for almost every $x \in \mathbb{R}^{n_1+n_2}$. 
\end{theorem}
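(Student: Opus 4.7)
The plan is to upgrade the argument for Theorem \ref{thm:Mauceri-kernel-a=0} from $L^2$ control to $L^1$ control, which is precisely what the increase in regularity from $L_0 = \floor*{Q/2}+3$ to $L_0 = Q+3$ is designed to buy. Following the Lerner--Ombrosi framework for pointwise sparse domination, adapted to the homogeneous space $(\mathbb{R}^{n_1+n_2}, d, |\cdot|)$ with Christ's dyadic cubes $\mathcal{S}$, it suffices to show that both $\Theta$ and an appropriate grand maximal truncation $\mathcal{M}_\Theta^{\#}$ are of weak-type $(1,1)$. The former is immediate from Theorem \ref{thm:Mauceri-kernel-a=0-weak-11}, since $Q+3 \geq Q+2$.

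For the weak $(1,1)$ bound on $\mathcal{M}_\Theta^{\#}$, the plan is to reduce, via a standard Cotlar-type inequality, to an $L^1$ H\"ormander-type kernel estimate of the form
\begin{equation*}
\sup_{y, z} \int_{d(x, y) > 2 d(y, z)} |K(x, y) - K(x, z)| \, dx \lesssim 1,
\end{equation*}
where $K$ denotes the integral kernel of $\Theta$. To establish this, I would decompose $\Theta = \sum_{j \geq 0} \Theta_j$ in the spectral scale using $\chi_j(\lambda)$, so that each kernel $K_j$ is concentrated on Grushin balls of radius $2^{-j/2}$. Starting from the $L^2$ assumptions \eqref{def:grushin-kernel-Maucheri-hormander-cond1-L0-N0} and \eqref{def:grushin-kernel-Maucheri-hormander-cond2-L0-N0}, I would pass to $L^1$ via Cauchy--Schwarz, which costs a factor $|B(x, 2^{-j/2})|^{1/2}$. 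Because $Q$ is the homogeneous dimension governing volume growth, this exchange is exactly affordable with $L_0 = Q+3$ derivatives in hand: the gain $2^{-j(|\nu+\gamma|+|\vartheta_1+\vartheta_2|)}$ from the hypotheses compensates for the volume loss and still leaves a bit of room for $j$-summability. The kernel differences $K(x,y) - K(x,z)$ are then handled by a mean value argument along the first-order Grushin gradient, which is exactly why the multiplications by ${x'}^{\alpha_0}\partial_{x''}^{\beta_0}$ with $|\alpha_0| = |\beta_0| \leq N_0 = 1$ appear in the hypotheses; cf.\ \eqref{first-order-grad}--\eqref{first-order-grad-vector}.

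The main obstacle is the anisotropy of the Grushin control distance, whose asymptotics change sharply between the degenerate regime ($|x'|+|y'| \lesssim 2^{-j/2}$) and the non-degenerate one. This dichotomy is precisely what dictates the split into \eqref{def:grushin-kernel-Maucheri-hormander-cond1-L0-N0} and \eqref{def:grushin-kernel-Maucheri-hormander-cond2-L0-N0}, mirroring the framework of \cite{Bagchi-Garg-1}. The delicate part will be to track how many derivatives are spent in each regime, and to verify that the two contributions combine into an estimate which is summable in $j$ after being multiplied by the ambient volume factor that the H\"ormander integration produces. Once this is in place, the Lerner--Ombrosi machinery delivers the claimed pointwise sparse domination $|\Theta f(x)| \lesssim \mathcal{A}_S f(x)$ for a.e.\ $x$.
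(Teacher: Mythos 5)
The scaffolding you invoke (weak $(1,1)$ for the operator plus weak $(1,1)$ for a grand maximal truncation, in the Lerner--Ombrosi/Lorist framework on the homogeneous space) is indeed what underlies Theorem \ref{thm:main-sparse-more-derivative} from \cite{BBGG-1}, and your observation that Theorem \ref{thm:Mauceri-kernel-a=0-weak-11} applies because the hypotheses for $L_0=Q+3$, $N_0=1$ contain those for $L_0=Q+2$, $N_0=0$ is correct. The gap is in the step you propose for the maximal truncation: an integrated $L^1$-H\"ormander condition $\sup_{y,z}\int_{d(x,y)>2d(y,z)}|K(x,y)-K(x,z)|\,dx\lesssim 1$ is not sufficient, even together with weak $(1,1)$ of $\Theta$, to control $\mathcal{M}^{\#}_{\Theta}$ weakly on $L^1$ or to conclude pointwise domination by $\mathcal{A}_{S}=\mathcal{A}_{1,S}$. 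Pointwise $\mathcal{A}_{1,S}$ domination forces $A_1$-type weighted bounds which are known to fail for operators satisfying only the classical (integrated) H\"ormander condition; integrated $L^2$/$L^1$ kernel regularity is exactly the kind of information that yields the weaker conclusion $\mathcal{A}_{2,S}$ of Theorem \ref{thm:Mauceri-kernel-a=0} through conditions \eqref{cond:General-hypo} and \eqref{cond:General-hypo-grad}. To reach $\mathcal{A}_{S}$ one needs pointwise (sup-type) control of the frequency-localized kernels and of their gradients in \emph{both} variables, which is what conditions \eqref{cond:General-hypo-sup}, \eqref{cond:General-hypo-y-grad-sup} and \eqref{cond:General-hypo-grad-sup} encode.

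This is precisely where the paper spends the extra derivatives: under $L_0=Q+3$, $N_0=1$ one proves the $L^{\infty}$-weighted estimates of Lemmas \ref{lem:General-Sup-kernel-estimate}, \ref{General-Sup-kernel-grad-estimate-x-variable} and \ref{General-Sup-kernel-grad-estimate-y-variable}, namely $d(x,y)^{\mathfrak{r}}|\Theta_j(x,y)|\lesssim 2^{-j\mathfrak{r}/2}|B(x,2^{-j/2})|^{-1/2}|B(y,2^{-j/2})|^{-1/2}$ for $0\le\mathfrak{r}\le Q+2$, together with the analogous bounds for $X_x\Theta_j$ and $X_y\Theta_j$ carrying an extra $2^{j/2}$; these give \eqref{cond:General-hypo-sup}, \eqref{cond:General-hypo-y-grad-sup}, \eqref{cond:General-hypo-grad-sup} with $R_0=Q+2$, and the theorem then follows from Theorem \ref{thm:main-sparse-more-derivative} and Remark \ref{choice:partition-of-unity}. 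Moreover, the mechanism is not a Cauchy--Schwarz in $y$ converting $L^2$ bounds into $L^1$ bounds at the price of one volume factor; it is a Cauchy--Schwarz in the intermediate variable of the composition of $\left(\textup{derivatives of }\Theta(x'',\lambda)\right)\chi_{j'}(\lambda)$ with $\left(\textup{derivatives of }S_j(\lambda)\right)$, where the first factor is estimated by \eqref{def:grushin-kernel-Maucheri-hormander-cond1-L0-N0}--\eqref{def:grushin-kernel-Maucheri-hormander-cond2-L0-N0} and the second by the Plancherel theorem in the $z''$-variable; this is what produces the product $|B(x,2^{-j/2})|^{-1/2}|B(y,2^{-j/2})|^{-1/2}$ and explains why roughly twice as many derivatives ($Q+3$ versus $\floor*{Q/2}+3$) are needed. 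Your reading of the role of $N_0=1$ (the factors ${x'}^{\alpha_0}\partial_{x''}^{\beta_0}$ feed the $x$-gradient estimates for the vector fields $x'_k\partial_{x''_l}$) and of the two regimes behind the two kernel conditions is accurate, but as written your plan would at best recover the $\mathcal{A}_{2,S}$ bound, not the claimed $\mathcal{A}_{S}$ bound.
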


Next, we discuss the particular family of operators as given in \eqref{def:pseudo-mult-op-with-conj-shifts}, which also provides us an example of operators of the type $\Theta(x'', \lambda)$. For the same, we need to first define the joint functional calculus of the Grushin operator $G$.

\subsection{Applications to shifted Grushin pseudo-multipliers} \label{subsec:main-results-joint-functional-calculus}

We briefly review the joint functional calculus of $G$, and refer to \cite{MartiniJointFunctionalCalculiMathZ,Bagchi-Garg-1,BBGG-2} for more details. Consider the following family of operators: 
\begin{align} \label{eq:operatorsLandU}
L_{j} = (-i \partial_{x_j'})^2 + {x_j^\prime}^{2} \sum_{k=1}^{n_2} (-i\partial_{x_k''})^2 \quad \textup{and} \quad U_k = - i \partial_{x_k''}, 
\end{align} 
for $j= 1, 2, \ldots, n_1$ and $k = 1, 2, \ldots, n_2$. The operators $L_{j}$ and $U_k$ are essentially self-adjoint on $C_c^\infty(\mathbb{R}^{n_1 + n_2})$ and their spectral resolutions commute. Hence they admit a joint functional calculus on $L^2(\mathbb{R}^{n_1 + n_2})$ in the sense of the spectral theorem. 

Let us write $\boldsymbol{L} = (L_1, L_2,\ldots, L_{n_1})$ and $\boldsymbol{U} = (U_1, U_2, \ldots, U_{n_2})$ and $ \tilde{1} = (1, 1, \ldots, 1) \in \mathbb{R}^{n_1}$. Given a function $m \in L^\infty \left( \mathbb{R}^{n_1 + n_2} \times (\mathbb{R}_+)^{n_1} \times (\mathbb{R}^{n_2} \setminus \{0\}) \right)$, one can (densely) define the multiplier operator $m( \boldsymbol{L}, \boldsymbol{U})$ by 
\begin{align} \label{def:joint-Gru-pseudo}
m(\boldsymbol{L}, \boldsymbol{U}) f(x) : = \int_{\mathbb{R}^{n_2}} e^{-i \lambda \cdot x^{\prime \prime}} \sum_{\mu \in \mathbb{N}^{n_1}} m \left( (2 \mu + \tilde{1}) |\lambda| \right) \left( f^{\lambda}, \Phi^{\lambda}_{\mu} \right) \Phi^{\lambda}_{\mu} (x^\prime) \, d\lambda
\end{align} 
where $\Phi^{\lambda}_{\mu}$ are the scaled Hermite functions. 

We adapt the convention that $\Phi^{\lambda}_{\mu} = 0$ if any of the coordinates $\mu_j$ of $\mu = \left( \mu_1, \ldots, \mu_{n_1} \right) $ is negative. Now, for fixed $\mu_{0}\in \mathbb{N}^{n_1}$, define the following shift operators: 
\begin{align}
\notag\mathfrak{S}_{\mu_{0}} f(x) & = \int_{\mathbb{R}^{n_2}}e^{-i\lambda \cdot x''} \sum_{\mu} \left( f^{\lambda}, \Phi^{\lambda}_{\mu} \right) \Phi^{\lambda}_{\mu + \mu_{o}}(x')\, d\lambda \\
\notag \mathfrak{S}^{*}_{\mu_{0}} f(x) & = \int_{\mathbb{R}^{n_2}} e^{-i\lambda\cdot  x''} \sum_{\mu} \left( f^{\lambda}, \Phi^{\lambda}_{\mu} \right) \Phi^{\lambda}_{\mu - \mu_{o}}(x')\, d\lambda, 
\end{align} 
for $f \in S (\mathbb{R}^{n_1+n_2})$, and given a bounded measurable function $m$ on $\left( \mathbb{R}_+ \right)^{n_1} \times (\mathbb{R}^{n_2}\setminus\{0\})$, consider the conjugation of the multiplier operator $m(\boldsymbol{L},\boldsymbol{U})$ by shift operators: 
\begin{align*} 
T^{\mu_{0}}_{m(\boldsymbol{L},\boldsymbol{U})} f(x) & = \mathfrak{S}_{\mu_{0}} \circ m(\boldsymbol{L},\boldsymbol{U}) \circ \mathfrak{S}^{*}_{\mu_{0}}f(x) \\ 
\nonumber & = \int_{\mathbb{R}^{n_2}} e^{-i \lambda \cdot  x''} \sum_{\mu} m(2 \mu + \tilde{1}) |\lambda|, \lambda)) \left( f^{\lambda}, \Phi^{\lambda}_{\mu + \mu_{0}} \right) \Phi^{\lambda}_{\mu + \mu_{0}}(x')\, d\lambda \\ 
\nonumber & = \int_{\mathbb{R}^{n_2}} e^{-i\lambda \cdot x''} \sum_{\mu} m((2 (\mu - \mu_0) + \tilde{1}) |\lambda|, \lambda) \left( f^{\lambda}, \Phi^{\lambda}_{\mu} \right) \Phi^{\lambda}_{\mu}(x') \, d\lambda.
\end{align*}

While there are Mihlin--H\"ormander-type boundedness results for the operator $m(\boldsymbol{L}, \boldsymbol{U})$ under suitable conditions on $m$ (see, for example, Section 6 of \cite{MartiniJointFunctionalCalculiMathZ}), it is because of the absence of $L^p$-boundedness results (with $p \neq 2$) for the shift operators $\mathfrak{S}_{\mu_{0}}$ and $\mathfrak{S}^{*}_{\mu_{0}}$ that we can not perhaps directly deduce $L^p$-boundedness results for the operators $T^{\mu_{0}}_{m(\boldsymbol{L}, \boldsymbol{U})}$. 

In fact, we are going to consider Grushin pseudo-multiplier operators with shifts of the above type. More precisely, given a bounded measurable function $m$ on $\mathbb{R}^{n_1 + n_2} \times \left( \mathbb{R}_+ \right)^{n_1} \times (\mathbb{R}^{n_2} \setminus\{0\})$, we define  
\begin{align} \label{def-repeat:pseudo-mult-op-with-conj-shifts} 
T^{\mu_{0}}_{m(x, \boldsymbol{L}, \boldsymbol{U})} f(x) & : = \int_{\mathbb{R}^{n_2}} e^{-i\lambda \cdot x''} \sum_{\mu} m(x, (2 (\mu - \mu_0) + \tilde{1}) |\lambda|, \lambda) \left( f^{\lambda}, \Phi^{\lambda}_{\mu} \right) \Phi^{\lambda}_{\mu}(x') \, d\lambda, 
\end{align}
and study analogues of Theorems \ref{thm:Mauceri-kernel-a=0-weak-11}, \ref{thm:Mauceri-kernel-a=0} and \ref{thm:Mauceri-kernel-a=0-more-derivative} for these operators. 

Note that we have 
$$ T^{\mu_{0}}_{m(x, \boldsymbol{L}, \boldsymbol{U})} f(x) = \int_{\mathbb{R}^{n_2}} e^{-i \lambda \cdot x^{\prime \prime}} \left( \Theta(x'', \lambda) f^{\lambda} \right) (x^{\prime}) \, d\lambda, $$ 
where the operator family $\Theta(x'', \lambda)$ is densely defined by 
\begin{align} \label{def:theta-lambda-example-with-shifts} 
\Theta(x'', \lambda) f^{\lambda} (x') = \sum_{\mu} m(x, (2 (\mu - \mu_0) + \tilde{1}) |\lambda|, \lambda) \left( f^{\lambda}, \Phi^{\lambda}_{\mu} \right) \Phi^{\lambda}_{\mu} (x').
\end{align} 
We study these operators under the following natural assumption on the symbol function $m$:
\begin{align} \label{assumption:decay-grushin-joint-symb}
\left|X^\Gamma \partial_{\tau}^{\alpha} \partial_{\kappa}^{\beta} m(x, \tau, \kappa) \right| \leq_{\Gamma, \alpha, \beta} (1 + |\tau| + |\kappa|)^{-  (|\alpha|+ |\beta|) + \frac{|\Gamma|}{2}} 
\end{align} 
for some $\Gamma \in \mathbb{N}^{n_1 + n_1 n_2}$, $\alpha \in \mathbb{N}^{n_1}$ and $\beta \in \mathbb{N}^{n_2}$.  

Together with the decay assumption \eqref{assumption:decay-grushin-joint-symb} on the symbol function $m(x, \tau, \kappa)$ and its gradients, we also require a cancellation condition in $\kappa$-variable of the following type 
\begin{align} \label{def:grushin-symb-vanishing-0-condition}
\lim_{\kappa \to 0} \partial_{\kappa}^{\beta'} m(x, \tau, \kappa) = 0, 
\tag{CancelCond} 
\end{align} 
for appropriate $\beta' \in \mathbb{N}^{n_2}$. Making use of condition \eqref{def:grushin-symb-vanishing-0-condition}, several $L^p$-boundedness results for various classes of pseudo-multipliers associated to joint functional calculus of $G$ are studied in \cite{Bagchi-Garg-1,BBGG-2}. Before moving further, let us write a simple example of a symbol function satisfying assumptions of our interest. 

\begin{example} \label{Example:grushin-symb-vanishing-0-condition}
Consider the following symbol function
\begin{align}
m(x, \tau, \kappa) =\exp (- (1 + |\tau|^2)/|\kappa|^2).
\end{align}
It is straightforward to verify that $m$ satisfies \eqref{assumption:decay-grushin-joint-symb} for all $\Gamma \in \mathbb{N}^{n_1 + n_1 n_2}$, $\alpha \in \mathbb{N}^{n_1}$ and $\beta \in \mathbb{N}^{n_2}$, as well as \eqref{def:grushin-symb-vanishing-0-condition} for all $\beta' \in \mathbb{N}^{n_2}$.
\end{example}

The following remark is also in order. 
\begin{remark} \label{rem:shift-vs-joint-spectral}
One should note that the cancellation condition \eqref{def:grushin-symb-vanishing-0-condition} is not really sufficient to convert the shift type case to the one for the joint spectral one. Namely, if we define $\tilde{m}(x, \tau, \kappa) = m(x, \tau + |\kappa| \mu_0, \kappa)$, then one may ask whether the shift type case gets converted into the spectral pseudo one for $\tilde{m}(x, \boldsymbol{L}, \boldsymbol{U})$ which is covered in our earlier spectral pseudo-multiplier theorem. No, there is an issue, and we explain it below. 

When one takes a derivative in $\kappa_l$-variable, one of the terms that one gets would be of the form $\frac{\kappa_l}{|\kappa|} \partial_{{\tau_j}} \tilde{m}(x, \tau, \kappa)$. Another derivative in $\kappa_l$-variable will then produce a term of the form $|\kappa|^{-1} \partial_{{\tau_j}} \tilde{m}(x, \tau, \kappa)$. Thanks to the assumed cancellation condition, we have no issue of singularity near $\kappa = 0$, yet we may not get the expected rate of decay. To be more precise, while we need  
$$\left|\partial^2_{{\kappa_l}}\tilde{m}(x, \tau, \kappa)\right| \lesssim (1 + |\tau| + |\kappa|)^{-2},$$ 
we are only assured of the following bound 
$$\left|\partial^2_{{\kappa_l}}\tilde{m}(x, , \tau, \kappa)\right| \lesssim (1 + |\kappa|)^{-1} (1 + |\tau| + |\kappa|)^{-1}.$$
\end{remark}

We now state our results for operators $T^{\mu_{0}}_{m(x, \boldsymbol{L}, \boldsymbol{U})}$ as an application of Theorem~\ref{thm:Mauceri-kernel-a=0-weak-11}, Theorem~\ref{thm:Mauceri-kernel-a=0} and Theorem~\ref{thm:Mauceri-kernel-a=0-more-derivative}. 
\begin{theorem} \label{thm:joint-shift-multi-weak-type}
Let $m\in  L^\infty \left(\mathbb{R}^{n_1 + n_2} \times (\mathbb{R}_+)^{n_1} \times (\mathbb{R}^{n_2} \setminus \{0\}) \right)$ be such that it satisfies the condition 
\eqref{assumption:decay-grushin-joint-symb} for all $|\alpha|+ |\beta| \leq Q +2$, and the condition \eqref{def:grushin-symb-vanishing-0-condition} for all $|\beta'| \leq Q + 2$. If the operator $T^{\mu_{0}}_{m(x, \boldsymbol{L}, \boldsymbol{U})}$ is bounded on $L^2 ( \mathbb{R}^{n_1 + n_2})$ then $T^{\mu_{0}}_{m(x, \boldsymbol{L}, \boldsymbol{U})}$ is of weak type $(1, 1)$. 
\end{theorem}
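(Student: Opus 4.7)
The plan is to realize $T^{\mu_{0}}_{m(x, \boldsymbol{L}, \boldsymbol{U})}$ as an operator of the form $\Theta$ defined in \eqref{def:Theta-operator}, with operator family $\Theta(x'', \lambda)$ given by \eqref{def:theta-lambda-example-with-shifts}, and then invoke Theorem~\ref{thm:Mauceri-kernel-a=0-weak-11}. The task thus reduces to verifying conditions \eqref{def:grushin-kernel-Maucheri-hormander-cond1-L0-N0} and \eqref{def:grushin-kernel-Maucheri-hormander-cond2-L0-N0} with $L_0 = Q+2$ and $N_0 = 0$. Since $N_0=0$, only $\alpha_0 = \beta_0 = 0$ appear, so we need to control the kernels of $\mathcal{L}(\lambda)_{\vartheta_1,\vartheta_2}^{\nu,\gamma}\,\Theta(x'',\lambda)\,\chi_j(\lambda)$ for $|\nu+\gamma|+|\vartheta_1+\vartheta_2|\le Q+2$ (resp.\ $|\nu+\gamma|+2|\vartheta_1+\vartheta_2|\le Q+2$ for the second condition).

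First I would compute the non-commutative derivatives. Using the identities $A_j(\lambda)\Phi^\lambda_\mu = (2\mu_j |\lambda|)^{1/2}\Phi^\lambda_{\mu-e_j}$ and $A_j(\lambda)^*\Phi^\lambda_\mu = (2(\mu_j+1)|\lambda|)^{1/2}\Phi^\lambda_{\mu+e_j}$, each application of $\delta_j(\lambda)$ or $\bar\delta_j(\lambda)$ to the spectral expansion \eqref{def:theta-lambda-example-with-shifts} turns into a first-order finite difference in the $\tau$-variable of $m(x,(2(\mu-\mu_0)+\tilde 1)|\lambda|,\lambda)$ (between neighboring indices $\mu,\mu\pm e_j$) multiplied by factors $\mu_j^{1/2}$ and $|\lambda|^{-1/2}\cdot|\lambda|^{1/2}$. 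Iterating and applying the mean value theorem together with the symbol bound \eqref{assumption:decay-grushin-joint-symb}, an overall factor of $(1+(2|\mu|+n_1)|\lambda|)^{-|\nu+\gamma|/2}$ is gained; on the range of $\chi_j(\lambda)$ this is precisely $2^{-j|\nu+\gamma|/2}$.

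Next I would handle the $\lambda$-derivatives $\partial_\lambda^{\vartheta_1}$. Each $\partial_{\lambda_l}$ either lands on the $\kappa$-slot of $m$, yielding a harmless bounded term by \eqref{assumption:decay-grushin-joint-symb}, or on the $\tau$-slot through $\partial_{\lambda_l}|\lambda| = \lambda_l/|\lambda|$, producing a factor $(2|\mu|+n_1)$ alongside $\partial_\tau m$; on the range of $\chi_j(\lambda)$ this pair contributes the desired factor $|\lambda|^{-1}$ that matches the prefactor $|\lambda|^{-|\vartheta_2|}$ in \eqref{def:derivative-theta-family}. Repeated $\lambda$-differentiation in the $\kappa$-slot creates potentially singular terms like $|\kappa|^{-k}\partial_\tau^a\partial_\kappa^b m$; here the cancellation hypothesis \eqref{def:grushin-symb-vanishing-0-condition} for all $|\beta'|\le Q+2$ allows a Taylor expansion at $\kappa=0$ up to the required order, so the negative powers of $|\kappa|$ are absorbed and a uniform bound consistent with \eqref{assumption:decay-grushin-joint-symb} is retained. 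This is exactly the obstruction highlighted in Remark~\ref{rem:shift-vs-joint-spectral}.

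With the symbol-level estimates in hand, I would estimate the two integrals over $\mathfrak{E}_j(x')$ and $\mathfrak{E}_j(x')^c$ by reducing to weighted Plancherel estimates for Hermite projections. Specifically, I would combine the pointwise bound on the transformed symbol (uniform in $\mu$ with $(2|\mu|+n_1)|\lambda|\sim 2^j$) with Plancherel in $\mu$, and with the kernel-level estimates of Hermite function sums on the two regions $\mathfrak{E}_j(x')$ and $\mathfrak{E}_j(x')^c$ as in Proposition~4.1, Lemma~4.5 and Proposition~5.1 of \cite{Bagchi-Garg-1}. Pairing the resulting bound with $|B(x,2^{-j/2})|\sim 2^{-jQ/2}\max\{2^{-j/2},|x'|\}^{n_2}$ balances the desired right-hand sides $2^{-j(|\nu+\gamma|+|\vartheta_1+\vartheta_2|)}$ and $2^{-j(|\nu+\gamma|+2|\vartheta_1+\vartheta_2|)}$, where the factor $(|x'|+|y'|)^{-|\vartheta_1+\vartheta_2|}$ appearing in \eqref{def:grushin-kernel-Maucheri-hormander-cond1-L0-N0} absorbs exactly the extra powers of $|x'|$ coming from the asymptotics of the Grushin distance in the near-axis regime.

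The main obstacle will be the bookkeeping in the preceding paragraph: controlling the combinatorial explosion of terms from iterating $\partial_\lambda^{\vartheta_1}$ together with $\delta(\lambda)^\nu\bar\delta(\lambda)^\gamma$, and ensuring at every stage that the Taylor expansion of $m$ near $\kappa=0$ supplied by \eqref{def:grushin-symb-vanishing-0-condition} suffices to cancel the $|\kappa|^{-k}$ singularities without sacrificing the decay in $|\tau|+|\kappa|$. The required orders are tight ($|\vartheta_1|\le Q+2$ matches the number of vanishing derivatives assumed on $m$), so this needs to be tracked with care.
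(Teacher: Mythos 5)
Your overall strategy (realize $T^{\mu_{0}}_{m(x, \boldsymbol{L}, \boldsymbol{U})}$ as $\Theta$ with the family \eqref{def:theta-lambda-example-with-shifts} and feed it into Theorem~\ref{thm:Mauceri-kernel-a=0-weak-11} with $L_0=Q+2$, $N_0=0$) is the paper's strategy, but the step where you verify \eqref{def:grushin-kernel-Maucheri-hormander-cond1-L0-N0} and \eqref{def:grushin-kernel-Maucheri-hormander-cond2-L0-N0} as stated has a genuine gap. Those conditions involve the non-commutative derivatives $\delta^{\nu}(\lambda)\bar{\delta}^{\gamma}(\lambda)$, and $\delta_j(\lambda), \bar{\delta}_j(\lambda)$ are built from $A_j(\lambda)=\partial_{x'_j}+|\lambda|x'_j$. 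Your claim that each such derivative only produces a finite difference in the $\tau$-slot of $m$ is true for an $x$-independent multiplier, but here the symbol is $m(x,\tau,\kappa)$ with $x=(x',x'')$: the commutator $[\Theta(x'',\lambda),A_j(\lambda)]$ contains a term in which $\partial_{x'_j}$ falls on the $x'$-dependence of $m$, so iterating up to order $Q+2$ forces you to control spatial derivatives $\partial_{x'}^{\nu}m$ up to order $Q+2$. The hypotheses of the theorem give no such control (condition \eqref{assumption:decay-grushin-joint-symb} is assumed with $\Gamma=0$ here, and even the sparse theorems allow only $|\Gamma|\le 1$). This is precisely the obstruction the paper flags when it says the $\delta,\bar\delta$ derivatives "would fall on the symbol function $m(x,\tau,\kappa)$ too".

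The paper's resolution, which your proposal is missing, is to replace \eqref{def:grushin-kernel-Maucheri-hormander-cond1-L0-N0}--\eqref{def:grushin-kernel-Maucheri-hormander-cond2-L0-N0} by the weaker conditions \eqref{def:grushin-kernel-Maucheri-hormander-cond1-L0-N0-for-symbols}--\eqref{def:grushin-kernel-Maucheri-hormander-cond2-L0-N0-for-symbols}, built from $\widetilde{\mathcal{L}(\lambda)}^{\nu}_{\vartheta_1,\vartheta_2}=|\lambda|^{-|\vartheta_2|}D^{\nu}\partial_{\lambda}^{\vartheta_1}$, where $D^{\nu}$ is the commutator with multiplication by ${x'}^{\nu}$ only; since $D_j=|\lambda|^{-1/2}(\delta_j(\lambda)+\bar\delta_j(\lambda))$ these follow from the original ones, and (crucially) a re-reading of the proofs in Section~\ref{sec:mauceri-type-weighted-kernel-estimates} shows only the tilde conditions are ever used, so Theorem~\ref{thm:Mauceri-kernel-a=0-weak-11} remains valid under them. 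The commutator $D^{\nu}$ acts on the kernel as multiplication by $(x'-y')^{\nu}$ and hence, via Lemma~\ref{first-layer-lem}/Lemma~\ref{weighted-kernel-estimate-3}, produces only $\tau$-differences of $m$ and no spatial derivatives; the tilde conditions are then verified for \eqref{def:theta-lambda-example-with-shifts} by Taylor expansion at $\kappa=0$ using \eqref{def:grushin-symb-vanishing-0-condition} to kill the homogeneous singular factors, and Hermite-projection estimates (Lemmas 10--11 of \cite{MartiniMullerGrushinRevistaMath}) to absorb the index shifts. A secondary imprecision in your sketch: $\partial_{\lambda}^{\vartheta_1}$ does not act only on the $\tau$- and $\kappa$-slots of $m$; it also differentiates the scaled Hermite functions $\Phi^{\lambda}_{\mu}$, creating shifted indices and homogeneous factors $\mathfrak{A}_l(\lambda)$, which is exactly what Lemma~\ref{weighted-kernel-estimate-3} is needed for and what makes the bookkeeping you mention nontrivial.
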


\begin{theorem} \label{thm:joint-shift-multi-sparse-less-derivative} 
Let $m\in  L^\infty \left(\mathbb{R}^{n_1 + n_2} \times (\mathbb{R}_+)^{n_1} \times (\mathbb{R}^{n_2} \setminus \{0\}) \right)$ be such that it satisfies the condition 
\eqref{assumption:decay-grushin-joint-symb} for all $|\alpha|+ |\beta| + |\Gamma| \leq \floor*{Q/2} + 3$, $|\Gamma| \leq 1$, and the condition \eqref{def:grushin-symb-vanishing-0-condition} for all $|\beta'| \leq \floor*{Q/2} + 3$. Assume also that the operator $T^{\mu_{0}}_{m(x, \boldsymbol{L}, \boldsymbol{U})}$ is bounded on $L^2 ( \mathbb{R}^{n_1 + n_2})$. Then for every compactly supported bounded measurable function $f$ there exists a sparse family $S\subset \mathcal{S}$ such that
\begin{align*} 
|T^{\mu_{0}}_{m(x, \boldsymbol{L}, \boldsymbol{U})}f(x)| \lesssim_{T^{\mu_{0}}_{m}} \mathcal{A}_{2, S}f(x),
\end{align*}
for almost every $x \in \mathbb{R}^{n_1+n_2}$.
\end{theorem}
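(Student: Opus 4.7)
The plan is to realize $T^{\mu_0}_{m(x, \boldsymbol{L}, \boldsymbol{U})}$ as an operator of the form \eqref{def:Theta-operator} with $\Theta(x'', \lambda)$ given by the diagonal operator in \eqref{def:theta-lambda-example-with-shifts}, and then to apply Theorem \ref{thm:Mauceri-kernel-a=0}. Accordingly, the entire task reduces to verifying, under the assumptions on the symbol $m$, the two weighted Plancherel-type kernel conditions \eqref{def:grushin-kernel-Maucheri-hormander-cond1-L0-N0} and \eqref{def:grushin-kernel-Maucheri-hormander-cond2-L0-N0} with $L_0 = \floor*{Q/2}+3$ and $N_0 = 1$ for this particular $\Theta$.

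First I would compute the action of $\mathcal{L}(\lambda)^{\nu,\gamma}_{\vartheta_1,\vartheta_2}\, {x'}^{\alpha_0} \partial^{\beta_0}_{x''} \Theta(x'', \lambda)$ on the scaled Hermite basis $\{\Phi^\lambda_\mu\}$. Since $\Theta(x'', \lambda)$ is diagonal in this basis with eigenvalues $m(x, (2(\mu-\mu_0)+\tilde{1})|\lambda|, \lambda)$, the non-commutative derivatives $\delta_j(\lambda)$, $\bar\delta_j(\lambda)$ act as weighted raising/lowering operators which shift $\mu$ by $\pm e_j$ and produce factors of order $|\lambda|^{-1/2}(\mu_j |\lambda|)^{1/2}$; after composing with $\chi_j(\lambda)$, these are controlled by the $j$-band relation $(2|\mu|+n_1)|\lambda| \asymp 2^j$. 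The factor ${x'}^{\alpha_0}$ can be rewritten as a creation/annihilation combination, again compatible with the $j$-band, while $\partial^{\beta_0}_{x''}$ contributes an $x$-derivative of the symbol of order $|\Gamma| = |\beta_0| \le 1$. Finally, $\partial^{\vartheta_1}_\lambda$ produces derivatives of $m$ in both its $\tau$-slot (via $\tau = (2(\mu-\mu_0)+\tilde 1)|\lambda|$) and its $\kappa$-slot (via $\kappa = \lambda$). Collecting these contributions, $\mathcal{L}(\lambda)^{\nu,\gamma}_{\vartheta_1,\vartheta_2}\,{x'}^{\alpha_0} \partial^{\beta_0}_{x''} \Theta(x'', \lambda)\, \chi_j(\lambda)$ is expressed as a finite sum of diagonal Hermite-multiplier operators, each of the expected $2^j$-homogeneity.

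The delicate point, which I expect to be the main obstacle, is that $\partial^{\vartheta_1}_\lambda$ acting on the normalising prefactor $|\lambda|^{-(|\nu+\gamma|/2+|\vartheta_2|)}$ and on the $|\lambda|$-dependent pieces of $\tau$ produces negative powers of $|\lambda|$, and therefore a potential singularity near $\lambda = 0$. This is precisely where the cancellation hypothesis \eqref{def:grushin-symb-vanishing-0-condition} must be invoked: whenever a $\lambda$-derivative falls on the $\kappa$-slot of $m$, the condition $\lim_{\kappa \to 0}\partial^{\beta'}_\kappa m(x, \tau, \kappa) = 0$ combined with the decay \eqref{assumption:decay-grushin-joint-symb} yields a bound of the form $|\partial^{\beta'}_\kappa m(x, \tau, \kappa)| \lesssim \min\{|\kappa|(1+|\tau|)^{-|\beta'|-1},\,(1+|\tau|+|\kappa|)^{-|\beta'|}\}$ near $\kappa = 0$, which absorbs the missing powers of $|\lambda|$. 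This device can be iterated as long as $|\beta'| \le \floor*{Q/2}+3$, matching precisely the hypothesis of the theorem. The Fa\`a di Bruno bookkeeping here --- tracking how many $\lambda$-derivatives fall on which slot of $m$ and ensuring that the total derivative count stays within the budget $|\nu+\gamma|+|\vartheta_1+\vartheta_2|+|\beta_0| \le \floor*{Q/2}+3$ --- is tedious but mechanical.

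Once the symbol-side bounds are established, the weighted kernel estimates on $\mathfrak{E}_j(x')$ and on its complement demanded by \eqref{def:grushin-kernel-Maucheri-hormander-cond1-L0-N0} and \eqref{def:grushin-kernel-Maucheri-hormander-cond2-L0-N0} follow from the asymptotics of the Grushin control distance and of scaled Hermite functions, by the blueprint of Proposition 4.1, Lemma 4.5 and Proposition 5.1 of \cite{Bagchi-Garg-1} together with their adaptations in \cite{BBGG-1, BBGG-2}; the only genuinely new feature is the need to track the single $x$-derivative produced by $\partial^{\beta_0}_{x''}$, which is harmless because $N_0 = 1$ forces $|\Gamma| \le 1$. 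Applying Theorem \ref{thm:Mauceri-kernel-a=0} to the resulting $\Theta$ then yields the $L^2$-sparse domination claimed for $T^{\mu_0}_{m(x, \boldsymbol{L}, \boldsymbol{U})}$.
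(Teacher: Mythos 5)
There is a genuine gap, and it sits exactly at the point the paper flags as the crux of this application. Your plan is to verify the original conditions \eqref{def:grushin-kernel-Maucheri-hormander-cond1-L0-N0} and \eqref{def:grushin-kernel-Maucheri-hormander-cond2-L0-N0} for the family \eqref{def:theta-lambda-example-with-shifts} and then invoke Theorem \ref{thm:Mauceri-kernel-a=0}. But your starting premise that ``$\Theta(x'',\lambda)$ is diagonal in the Hermite basis'' is false here: $m(x,\tau,\kappa)$ depends on the spatial variable $x=(x',x'')$, so \eqref{def:theta-lambda-example-with-shifts} is a pseudo-multiplier whose kernel $\sum_{\mu} m\bigl((x',x''),(2(\mu-\mu_0)+\tilde 1)|\lambda|,\lambda\bigr)\Phi^\lambda_\mu(x')\Phi^\lambda_\mu(y')$ carries the symbol's $x'$-dependence at the output variable. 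Consequently each non-commutative derivative $\delta_j(\lambda)M=|\lambda|^{-1/2}[M,A_j(\lambda)]$, $\bar\delta_j(\lambda)M=|\lambda|^{-1/2}[A_j(\lambda)^*,M]$ contains a $\partial_{x'_j}$ that hits not only the Hermite functions (your raising/lowering picture) but also the first slot of $m$. Verifying \eqref{def:grushin-kernel-Maucheri-hormander-cond1-L0-N0}--\eqref{def:grushin-kernel-Maucheri-hormander-cond2-L0-N0} with $L_0=\floor*{Q/2}+3$ would therefore require up to $\floor*{Q/2}+3$ spatial ($x'$-) derivatives of $m$, while the hypothesis \eqref{assumption:decay-grushin-joint-symb} of the theorem only controls $X^\Gamma m$ for $|\Gamma|\leq 1$. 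So the step ``verify $KC1$/$KC2$ for this $\Theta$'' cannot be carried out under the stated assumptions; the obstruction is not mere bookkeeping.

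The paper's proof avoids this by replacing the non-commutative derivatives with the commutator $D^\nu$ by coordinate multiplication (which on kernels is just multiplication by $(x'-y')^\nu$ and never differentiates the symbol), i.e.\ the weaker conditions \eqref{def:grushin-kernel-Maucheri-hormander-cond1-L0-N0-for-symbols} and \eqref{def:grushin-kernel-Maucheri-hormander-cond2-L0-N0-for-symbols}, and by observing that the proofs of Theorems \ref{thm:Mauceri-kernel-a=0-weak-11}--\ref{thm:Mauceri-kernel-a=0-more-derivative} in Section \ref{sec:mauceri-type-weighted-kernel-estimates} in fact only use these tilde-conditions (since $D_j=|\lambda|^{-1/2}(\delta_j(\lambda)+\bar\delta_j(\lambda))$, they are implied by the original ones). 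The verification of the tilde-conditions for \eqref{def:theta-lambda-example-with-shifts} then proceeds via Lemma \ref{weighted-kernel-estimate-3}, a Taylor expansion at $\kappa=0$ using \eqref{def:grushin-symb-vanishing-0-condition} to absorb the negative powers of $|\lambda|$ (this part of your sketch is in the right spirit), and Hermite-function estimates with shifts as in Martini--M\"uller. To repair your argument you must either add this reduction to the $D^\nu$-based conditions, or strengthen the hypotheses on $m$ to allow many spatial derivatives --- the latter would prove a weaker theorem than the one stated.
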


\begin{theorem} \label{thm:joint-shift-multi-sparse-more-derivative}
Let $m\in  L^\infty \left(\mathbb{R}^{n_1 + n_2} \times (\mathbb{R}_+)^{n_1} \times (\mathbb{R}^{n_2} \setminus \{0\}) \right)$ be such that it satisfies the condition 
\eqref{assumption:decay-grushin-joint-symb} for all $|\alpha|+ |\beta| + |\Gamma| \leq Q + 3$, $|\Gamma| \leq 1$, and the condition \eqref{def:grushin-symb-vanishing-0-condition} for all $|\beta'| \leq Q + 3$. Assume also that the operator $T^{\mu_{0}}_{m(x, \boldsymbol{L}, \boldsymbol{U})}$ is bounded on $L^2 ( \mathbb{R}^{n_1 + n_2})$. Then for every compactly supported bounded measurable function $f$ there exists a sparse family $S\subset \mathcal{S}$ such that
\begin{align*} 
|T^{\mu_{0}}_{m(x, \boldsymbol{L}, \boldsymbol{U})}f(x)| \lesssim_{T^{\mu_{0}}_{m}} \mathcal{A}_{S}f(x),
\end{align*}
for almost every $x \in \mathbb{R}^{n_1+n_2}$.
\end{theorem}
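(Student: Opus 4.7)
The plan is to deduce Theorem \ref{thm:joint-shift-multi-sparse-more-derivative} from Theorem \ref{thm:Mauceri-kernel-a=0-more-derivative} applied to the operator family $\Theta(x'',\lambda)$ of \eqref{def:theta-lambda-example-with-shifts}, for which one must verify the kernel estimates \eqref{def:grushin-kernel-Maucheri-hormander-cond1-L0-N0} and \eqref{def:grushin-kernel-Maucheri-hormander-cond2-L0-N0} with parameters $L_{0}=Q+3$ and $N_{0}=1$. Together with the assumed $L^{2}$-boundedness of $T^{\mu_{0}}_{m(x,\boldsymbol{L},\boldsymbol{U})}$, this will allow us to invoke Theorem \ref{thm:Mauceri-kernel-a=0-more-derivative} directly and obtain the pointwise sparse control by $\mathcal{A}_{S}f$.

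The first step is to compute $\delta^{\nu}(\lambda)\bar\delta^{\gamma}(\lambda)\partial_{\lambda}^{\vartheta_1}\bigl(x'^{\alpha_0}\partial_{x''}^{\beta_0}\Theta(x'',\lambda)\bigr)\chi_{j}(\lambda)$ at the kernel level. Since $\Theta(x'',\lambda)$ is diagonal in the scaled Hermite basis $\{\Phi^{\lambda}_{\mu}\}$ with eigenvalues $m(x,(2(\mu-\mu_{0})+\tilde 1)|\lambda|,\lambda)$, the non-commutative derivatives $\delta^{\nu}(\lambda),\bar\delta^{\gamma}(\lambda)$ act by shifting the Hermite index and produce eigenvalue-difference factors which, on the range of $\chi_{j}(\lambda)$, scale precisely as $|\lambda|^{(|\nu|+|\gamma|)/2}$; these are absorbed by the normalisation in \eqref{def:derivative-theta-family}. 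The factor $x'^{\alpha_0}\partial_{x''}^{\beta_0}$ with $|\alpha_0|=|\beta_0|\le 1$ is handled by the $|\Gamma|\le 1$ part of \eqref{assumption:decay-grushin-joint-symb}. For $\partial_{\lambda}^{\vartheta_1}$ one applies the Faà di Bruno expansion: each $\partial_{\tau_j}$ absorbs the factor $(2(\mu_j-(\mu_0)_j)+1)\,\partial_{\lambda}|\lambda|$, whose $|\mu|$-growth is traded via \eqref{assumption:decay-grushin-joint-symb} for $|\lambda|^{-1}$ on the range of $\chi_j(\lambda)$, while each $\partial_{\kappa}$ contributes an additional $(1+|\tau|+|\kappa|)^{-1}$. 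Once this expansion is in hand, the kernel of the composition with $\chi_j(\lambda)$ is estimated by the weighted Plancherel arguments of \cite{Bagchi-Garg-1} (cf.\ the proofs of Proposition 4.1, Lemma 4.5 and Proposition 5.1 there, and the adaptations in \cite{BBGG-2}), yielding the bounds $2^{j|\beta_0|}2^{-j(|\nu+\gamma|+|\vartheta_1+\vartheta_2|)}$ on $\mathfrak{E}_{j}(x')$ and $2^{j|\beta_0|}2^{-j(|\nu+\gamma|+2|\vartheta_1+\vartheta_2|)}$ on $\mathfrak{E}_{j}(x')^{c}$, after multiplication by $|B(x,2^{-j/2})|$.

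The main obstacle is the phenomenon highlighted in Remark \ref{rem:shift-vs-joint-spectral}: higher-order $\lambda$-differentiation of $m(x,(2(\mu-\mu_0)+\tilde 1)|\lambda|,\lambda)$ inevitably produces terms of the form $|\lambda|^{-k}\partial_{\tau}^{\alpha}\partial_{\kappa}^{\beta}m$ coming from derivatives of $|\lambda|$ itself, which are not controlled by \eqref{assumption:decay-grushin-joint-symb} alone. Here the cancellation hypothesis \eqref{def:grushin-symb-vanishing-0-condition}, assumed for every $|\beta'|\leq Q+3$, becomes essential: Taylor expanding $\partial_{\kappa}^{\beta'}m(x,\tau,\kappa)$ around $\kappa=0$ to the required order recovers an extra $|\kappa|^{k}$, which cancels the singular factor at the cost of raising the $\kappa$-derivative count, and this is why the hypothesis is imposed over the entire range $|\beta'|\leq Q+3$ rather than only for small $|\beta'|$. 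Organising this cancellation term by term in the Faà di Bruno sum, and tracking carefully the interaction between the Hermite index-shifts produced by $\delta^{\nu}(\lambda)\bar\delta^{\gamma}(\lambda)$ and the $\mu_{0}$-shift inside the argument of $m$, is the delicate bookkeeping step of the proof; once accomplished, the kernel conditions \eqref{def:grushin-kernel-Maucheri-hormander-cond1-L0-N0} and \eqref{def:grushin-kernel-Maucheri-hormander-cond2-L0-N0} reduce to estimates already proved in \cite{Bagchi-Garg-1, BBGG-2}, and Theorem \ref{thm:Mauceri-kernel-a=0-more-derivative} completes the proof.
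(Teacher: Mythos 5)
There is a genuine gap at the very first step of your verification. You claim that the family $\Theta(x'',\lambda)$ of \eqref{def:theta-lambda-example-with-shifts} is ``diagonal in the scaled Hermite basis'' so that $\delta^{\nu}(\lambda)\bar\delta^{\gamma}(\lambda)$ merely produce index shifts and eigenvalue-difference factors of size $|\lambda|^{(|\nu|+|\gamma|)/2}$. This is false here: the symbol $m(x,\tau,\kappa)$ depends on the output variable $x=(x',x'')$, so $\Theta(x'',\lambda)$ is a \emph{pseudo}-multiplier, and the commutators defining $\delta_j(\lambda)$, $\bar\delta_j(\lambda)$ involve $A_j(\lambda)=\partial_{x'_j}+|\lambda|x'_j$, whose derivative part falls on the spatial dependence of $m$. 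Verifying \eqref{def:grushin-kernel-Maucheri-hormander-cond1-L0-N0} and \eqref{def:grushin-kernel-Maucheri-hormander-cond2-L0-N0} with $L_0=Q+3$ would therefore require controlling up to $Q+3$ spatial derivatives of $m$, while the hypothesis \eqref{assumption:decay-grushin-joint-symb} with $|\Gamma|\leq 1$ only controls a single gradient-field derivative $X^{\Gamma}$. So the route you propose — checking the original kernel conditions for this family — cannot be carried out under the stated assumptions.

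The paper circumvents exactly this obstruction: in Section \ref{Sec:Shifted-Grushin-pseudo-multipliers} it introduces the weaker conditions \eqref{def:grushin-kernel-Maucheri-hormander-cond1-L0-N0-for-symbols} and \eqref{def:grushin-kernel-Maucheri-hormander-cond2-L0-N0-for-symbols}, formulated with the commutator $D^{\nu}$ (commutation with multiplication by $x'$, which at the kernel level is multiplication by $(x'-y')^{\nu}$ and never differentiates $m$ in $x$) together with $|\lambda|^{-|\vartheta_2|}\partial_{\lambda}^{\vartheta_1}$, and observes that the proofs of Theorems \ref{thm:Mauceri-kernel-a=0-weak-11}--\ref{thm:Mauceri-kernel-a=0-more-derivative} in Section \ref{sec:mauceri-type-weighted-kernel-estimates} actually only use these tilde conditions (the passage from $D^{\nu}$ to $\delta,\bar\delta$ happens on the auxiliary factor $S_j(\lambda)$, not on $\Theta$). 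Only then are the tilde conditions verified for \eqref{def:theta-lambda-example-with-shifts}, via Lemma \ref{weighted-kernel-estimate-3}, the Taylor-expansion use of \eqref{def:grushin-symb-vanishing-0-condition}, and the shifted Hermite estimates of Martini--M\"uller. Your later steps (cancellation condition killing the negative powers of $|\kappa|$, kernel lemmas of \cite{Bagchi-Garg-1}, reduction to Theorem \ref{thm:Mauceri-kernel-a=0-more-derivative}) point at the right tools, but without replacing the $\delta,\bar\delta$-based conditions by the $D^{\nu}$-based ones (and justifying that the sparse theorems survive this replacement), the argument as written does not close.
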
 

\subsection{Methodology of the proof} 
\label{subsec:intro-methodology-proofs} 
As explained earlier, in this paper our main goal is to study pointwise sparse domination results for a family of operator valued Fourier pseudo-multipliers in the context of the Grushin operator. We shall use the sparse domination technique developed in \cite{Lerner-Ombrosi-pointwaise-sparse2020, Lorist-pointwaise-sparse2021} to prove our results. In \cite{BBGG-1}, using the technique developed in \cite{Lerner-Ombrosi-pointwaise-sparse2020, Lorist-pointwaise-sparse2021}, we studied sparse domination results for pseudo-multipliers associated to Grushin operator $G$. To prove sparse domination results in \cite{BBGG-1}, we used the weighted Plancherel estimates of the associated kernels of the pseudo-multipliers. But for the case of family of operator valued Fourier pseudo-multipliers the weighted estimates of the associated kernels do not follow from the known spectral theory. Instead, we build it with a very detailed use of the pointwise action of the Grushin control distance (for the operator $G$) developed in \cite{Bagchi-Garg-1}. Once we establish these estimates in Subsections \ref{subsec:mauceri-type-L2-weighted-kernel-estimates} and \ref{subsec:mauceri-type-L-infty-weighted-kernel-estimates}, Theorems \ref{thm:Mauceri-kernel-a=0-weak-11}, \ref{thm:Mauceri-kernel-a=0} and \ref{thm:Mauceri-kernel-a=0-more-derivative} follow from Theorems \ref{thm:weak-type-bound-operator}, \ref{thm:main-sparse} and \ref{thm:main-sparse-more-derivative} of Subsection \ref{subsec:results-for-Grsuhin-pseudo} respectively. We would however like to point out that our assumptions on number of derivatives for $\Theta(x'', \lambda)$ family in these theorems are $Q + 2$, $\floor*{Q/2} + 3$ or $Q + 3$, meaning that we need one/two extra derivatives in comparison to what we need for results of \cite{BBGG-1}. This is because our analysis depends on the kernel estimates of \cite{Bagchi-Garg-1}, and therefore we have to necessarily consider the action of the control distance function as well as its powers in the multiples of $4$ only, which leads to the mentioned extra derivative requirement.

We discuss some preliminary results related to our work in Section \ref{sec:prelim}. In Section \ref{sec:mauceri-type-weighted-kernel-estimates}, we study the weighted estimates of the kernels. Using the weighted estimates of kernel we shall prove our main results Theorems \ref{thm:Mauceri-kernel-a=0-weak-11}, \ref{thm:Mauceri-kernel-a=0} and \ref{thm:Mauceri-kernel-a=0-more-derivative}. Finally, as examples of such family of operators, we discuss the ones given by \eqref{def-repeat:pseudo-mult-op-with-conj-shifts}, duly noting that these operators do not fall into the category of the spectral pseudo-multipliers studied in \cite{BBGG-1}, and give the proof of Theorems  \ref{thm:joint-shift-multi-weak-type}, \ref{thm:joint-shift-multi-sparse-less-derivative} and \ref{thm:joint-shift-multi-sparse-more-derivative} in Section \ref{Sec:Shifted-Grushin-pseudo-multipliers}.

\subsection{Notations and parameters} \label{subsec:notations} 
We shall use the following list of notations and parameters in this paper.
\begin{itemize} 

\item $\mathcal{B} \left( Y \right) $   denotes the space of all bounded linear operators on the Banach space $Y$ and $\| T \|_{op}$ denotes the operator norm of $T \in \mathcal{B} \left( Y \right)$. 

\item $\mathbb{R}_+ = [0, \infty)$, $\mathbb{N} = \{0, 1, 2, 3, \ldots\}$ and $\mathbb{N}_+ = \{ 1, 2, 3, \ldots\}$. 

\item We write $x = (x^{\prime}, x^{\prime \prime}) \in \mathbb{R}^{n_1} \times \mathbb{R}^{n_2} \, = \mathbb{R}^{n_1 + n_2}$. 

\item We  write $|\mu| = \sum_{j=1}^{n_1}\mu_j$ for $\mu = (\mu_1, \ldots,\mu_{n_1}) \in \mathbb{N}^{n_1}$. Whenever $c = (c_1, \ldots, c_{n_1}) \in \mathbb{R}^{n_1}$, we write $|c| = \left( \sum_{j=1}^{n_1} |c_j|^2 \right)^{1/2}$ and $|c|_1 = \sum_{j=1}^{n_1} |c_j|$.

\item  $ \mathfrak{r}, R_0, \eta$ etc always represent elements of $\mathbb{R}_+$. 

\item  $ j, j^{\prime}, q, l, N, N_j, L, L_j$ etc always represent elements of $\mathbb{N}$. 

\item $\alpha, \alpha_j, \gamma, \gamma_j, \mu, \tilde{\mu}, \tilde{\tilde{\mu}}, \nu, \tilde{\nu}, \tilde{\tilde{\nu}}, \nu_1, \nu_2, \theta, \theta_j$ etc always represent elements of  $\mathbb{N}^{n_1}$. 

\item $\beta, \beta', \beta_j, \vartheta, \vartheta_j, \tilde{\vartheta}$ etc always represents elements of $\mathbb{N}^{n_2}$. 

\item $\Gamma, \Gamma_j $ etc always represent elements of $\mathbb{N}^{n_0}$, where $n_0 = n_1 + n_1 n_2$.

\item $ \vec{c}, \vec{c}$ and $\tilde{c}$ etc represent elements of $\mathbb{R}^{n_1}$. 

\item $\tau$ corresponds to an element of $ \left( \mathbb{R}_+ \right)^{n_1}$ and  $\lambda, \kappa $ etc correspond to elements of $\mathbb{R}^{n_2}$. We write
$\partial_{\tau}^{\alpha}$, $\partial_\lambda^{\vartheta}$ and $\partial_\kappa^{\beta}$ for partial differential operators.  

\item $\vec{c}(s)$ denotes a vector $(c_1(s), \ldots, c_{n_1}(s))$ where each $c_j(s)$ is a linear function on $[0,1]^{N}$, for some positive integer $N$. 

\item $\Omega$ stands for a compact subset of $\mathbb{R}^2$ which may differ at various places. 

\item $\vec{c}(w)$ denotes a real-valued linear function on $[0,1]^{N_1} \times \Omega^{N_2} \times [0,1]^{N_3}$ for some non-negative integers $N_1$,$N_2$ and $N_3$.

\item $g(w)$'s are continuous functions on compact sets.

\item For a multi-index $\gamma = (\gamma^{(j)})_{j=1}^{n}$, we write $\gamma! =  \prod_{j = 1}^{n} \gamma^{(j)}!$ and $\tau^{\frac{1}{2} \gamma} = \prod_{j = 1}^{n} \tau_j^{\frac{1}{2} \gamma^{(j)}}$.

\item For $A, B >0$, by the expression $A \lesssim B$ we mean that there exists a constant $C>0$ such that $A \leq C B$. We write $A \lesssim_{\epsilon} B$, whenever the implicit constant $C$ may depend on $\epsilon$. We write $A \sim B$, when $A \lesssim B$ and $B \lesssim A$ both true together.
\end{itemize}  

\section{Preliminaries} \label{sec:prelim}

In this section, we talk over some definitions and  basic results which will be essential to discuss the proofs of main results in the next subsections.
$G$ is a hypoelliptic operator, see \cite{Grushin70}, and the control distance $\tilde{d} (x,y)$ associated with the Grushin operator $G$ is given by 
\begin{align} \label{def:grushin-control-metric}
\tilde{d} (x,y) = \sup_{\Psi \in \mathcal{F}} \left| \Psi(x) - \Psi(y) \right|, 
\end{align}
where $\mathcal{F} = \left\{ \Psi \in W^{1, \infty} (\mathbb{R}^{n_1 + n_2}) : \sum\limits_{1 \leq j \leq n_1} \left| X_{j} \Psi \right|^2 + \sum\limits_{1 \leq j \leq n_1} \sum\limits_{1 \leq k \leq n_2} \left| X_{j, k} \Psi \right|^2 \leq 1 \right\}$ where $X_{j}$ and $X_{j, k}$ are the first order gradient vector field associated to the Grushin operator $G$ and are defined, for $1 \leq j \leq n_1$, $1 \leq k \leq n_2$, by
\begin{align} \label{first-order-grad}
X_{j} = \frac{\partial}{\partial x_j^{\prime}} \quad \textup{and} \quad X_{j, k} = x^{\prime}_{j} \frac{\partial}{\partial x_k^{\prime \prime}}.
\end{align}

It is known that $\tilde{d}$ asymptotically equivalent to a quasi metric $d$, where $d$ is given by  
\begin{align} \label{def:grushin-metric-asymp}
 d(x,y) := \left|x^{\prime} - y^\prime \right| + 
\begin{cases}
\frac{\left|x^{\prime \prime} - y^{\prime \prime}\right|}{\left|x^{\prime} \right| + \left|y^\prime \right|} &\textup{ if } \left|x^{\prime \prime} - y^{\prime \prime}\right|^{1/2} \leq \left|x^{\prime} \right| + \left|y^\prime \right| \\
\left|x^{\prime \prime} - y^{\prime \prime}\right|^{1/2} &\textup{ if } \left|x^{\prime \prime} - y^{\prime \prime}\right|^{1/2} \geq \left|x^{\prime} \right| + \left|y^\prime \right|. 
\end{cases}
\end{align}

From now on, by abuse of notation, we take $d(x,y)$ of \eqref{def:grushin-metric-asymp} as the Grushin metric. Let us denote by $X$ the first order gradient vector field 
\begin{align} \label{first-order-grad-vector}
X := (X_j, X_{j,k})_{1 \leq j \leq n_1, \, 1 \leq k \leq n_2}. 
\end{align}
To know more about control distance associated to Grushin operator, see the work \cite{RobinsonSikoraDegenerateEllipticOperatorsGrushinTypeMathZ2008}.

As mentioned in the introduction, our analysis of kernels of operators $\Theta$ corresponding to conditions \eqref{def:grushin-kernel-Maucheri-hormander-cond1-L0-N0} and \eqref{def:grushin-kernel-Maucheri-hormander-cond2-L0-N0} is based on the pointwise kernel estimates developed in \cite{Bagchi-Garg-1}. In particular, we shall make a frequent use of the two estimates from \cite{Bagchi-Garg-1}, but before stating those estimates, we have the following important remark. 

\begin{remark} \label{rem:convention-symbol-support} 
While making kernel estimates, involved symbol functions $m (\tau)$ (respectively $m (\tau, \kappa)$) are always compactly supported in $\left( \mathbb{R}_+ \right)^{n_1}$ (respectively in $\left( \mathbb{R}_+ \right)^{n_1} \times \left( \mathbb{R}^{n_2} \setminus \{0\} \right)$). This is to ensure the validity of various pointwise infinite sums in the discrete variable (in $\mathbb{R}^{n_1}$) and the integrability in continuous variable (in $\mathbb{R}^{n_2}$). However, the constants or the number of terms appearing in any of the claimed finite linear combination do not depend on the assumed compact support. These number of terms in-general depend on the power of $(x' - y')$ and $(x'' - y'')$ and the ambient dimensions $n_1$ and $n_2$. 

As usual, we can extend these symbol functions to $ \mathbb{R}^{n_1}$ (respectively on $\mathbb{R}^{n_1} \times \left( \mathbb{R}^{n_2} \setminus \{0\} \right)$) by defining them to be $0$ outside their support. 
\end{remark}

\begin{lemma}[Lemma 4.5, \cite{ Bagchi-Garg-1}] \label{first-layer-lem}
Let $\tilde{c} \in \mathbb{R}^{n_1}$ be fixed. For any $\alpha \in \mathbb{N}^{n_1}$, we can express 
$$ \left(x^{\prime} - y^{\prime} \right)^{\alpha} \sum_{\mu} m((2\mu + \tilde{c})|\lambda|) \Phi_{\mu}^{\lambda}(x^{\prime}) \Phi_{\mu}^{\lambda}(y^{\prime})$$
is a finite linear combination of terms of the form 
\begin{align*}
\int_{[0,1]^{|\alpha|}} \sum_{\mu} C_{\mu, \tilde{c}, \vec{c}} \left(\tau^{\frac{1}{2} \gamma_2} \partial_{\tau}^{ \gamma_1 } m\right) ((2\mu  + \tilde{c} + \vec{c}(s))|\lambda|) \Phi_{\mu + \tilde{\mu}}^{\lambda}(x^{\prime}) \Phi_{\mu}^{\lambda}(y^{\prime}) \, ds, 
\end{align*} 
where $|\gamma_2| \leq |\gamma_1| \leq |\alpha|$, $|\gamma_1| - \frac{1}{2} |\gamma_2| = \frac{|\alpha|}{2}$, $|\tilde{\mu}| \leq |\alpha|$, and $C_{\mu, \tilde{c}, \vec{c}}$ is a bounded function of $\mu$, $\tilde{c}$ and $\vec{c}$. 
\end{lemma}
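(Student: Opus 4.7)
The plan is to induct on $|\alpha|$. The base case $|\alpha|=0$ is immediate with $\gamma_1=\gamma_2=0$, $\tilde{\mu}=0$, empty integration over $[0,1]^0$, and $C_{\mu,\tilde c,\vec c}\equiv 1$. For the inductive step I would show that the action of a single factor $(x_j'-y_j')$ on a generic term of the target form indexed by $\alpha$ produces a finite linear combination of terms of the same form indexed by $\alpha + e_j$.

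Three ingredients go into the computation. First, the scaled creation/annihilation relations $A_j(\lambda)\Phi_\mu^\lambda = \sqrt{2|\lambda|\mu_j}\,\Phi_{\mu-e_j}^\lambda$ and $A_j(\lambda)^*\Phi_\mu^\lambda = \sqrt{2|\lambda|(\mu_j+1)}\,\Phi_{\mu+e_j}^\lambda$, combined with $x_j' = (2|\lambda|)^{-1}\bigl(A_j(\lambda) + A_j(\lambda)^*\bigr)$, give
\[
x_j'\Phi_\mu^\lambda = (2|\lambda|)^{-1/2}\Bigl(\sqrt{\mu_j}\,\Phi_{\mu-e_j}^\lambda + \sqrt{\mu_j+1}\,\Phi_{\mu+e_j}^\lambda\Bigr),
\]
and analogously for $y_j'$. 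Second, after expanding $(x_j'-y_j')$ into four sub-terms one reindexes the $\mu$-sum so that the $y'$-factor is uniformly $\Phi_\mu^\lambda(y')$, a manoeuvre that shifts the argument of $m$ by $\pm 2 e_j$. Third, the fundamental theorem of calculus
\[
m\bigl((2\mu+c)|\lambda|\bigr)-m\bigl((2\mu+c\pm 2e_j)|\lambda|\bigr) = \mp\, 2|\lambda|\int_0^1 \partial_{\tau_j}m\bigl((2\mu+c\pm 2s e_j)|\lambda|\bigr)\,ds
\]
converts the residual finite difference into a new $\partial_{\tau_j}$-derivative carrying an explicit factor $|\lambda|$.

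Combining these on the four sub-terms, the $(2|\lambda|)^{-1/2}$ produced by the raising/lowering step merges with the $|\lambda|$ produced by the FTC step into a coefficient of the form $\sqrt{|\lambda|(\mu_j + \textup{const})}$. I would then rewrite this as $\sqrt{\tau_j}$ evaluated at the shifted argument $(2\mu+\tilde{c}+\vec{c}(s))|\lambda|$ multiplied by a bounded ratio that is absorbed into $C_{\mu,\tilde c,\vec c}$. This accounts exactly for the simultaneous increments $|\gamma_1| \to |\gamma_1|+1$ and $|\gamma_2| \to |\gamma_2|+1$ (so $|\gamma_1|-|\gamma_2|/2$ grows by $1/2$ from $|\alpha|/2$ to $(|\alpha|+1)/2$, as required), the increment $|\tilde{\mu}| \to |\tilde{\mu}|+1$, the enlargement of the integration domain from $[0,1]^{|\alpha|}$ to $[0,1]^{|\alpha|+1}$, and the extension of the linear function $\vec{c}(s)$ by a new $\pm 2s e_j$-component in the fresh variable.

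The principal obstacle is to check that the scalar left over after the absorption, namely a ratio of the type $\sqrt{(\mu_j+\textup{const})/(\mu_j+\textup{shift}(s))}$, stays uniformly bounded in $\mu$ and in the integration parameters. Since $m$ is compactly supported in $(\mathbb{R}_+)^{n_1}$ by the convention of Remark~\ref{rem:convention-symbol-support}, the shifted argument lies in a compact set wherever the integrand is non-zero, and an elementary comparison between numerator and denominator, iterated through the induction on the parameters $(\gamma_1,\gamma_2,\tilde{\mu},\vec{c}(s))$, gives the required uniform bound. The remaining verification is purely bookkeeping of multi-indices and of the linear coefficients $\vec{c}(s)$.
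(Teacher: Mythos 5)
The paper does not actually prove this lemma: it is imported verbatim as Lemma 4.5 of \cite{Bagchi-Garg-1}, so the only possible comparison is with that source, and your overall strategy (the three-term recurrence through $A_j(\lambda)$, $A_j(\lambda)^*$, reindexing of the $\mu$-sum, the fundamental theorem of calculus to trade the finite difference for a $\partial_{\tau_j}$ with a factor $|\lambda|$, absorption of $\sqrt{(\mu_j+O(1))|\lambda|}$ into $\tau_j^{1/2}$, and induction on $|\alpha|$) is indeed the natural mechanism behind it.

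There is, however, a genuine gap in your inductive step. The class of terms in the conclusion, with $C_{\mu,\tilde c,\vec c}$ known only to be \emph{bounded}, is not closed under multiplication by $(x_j'-y_j')$. Apply the recurrence to a term carrying a shift $\tilde\mu\neq 0$ and set $F(\mu)=C_{\mu,\tilde c,\vec c(s)}\,\bigl(\tau^{\frac12\gamma_2}\partial_\tau^{\gamma_1}m\bigr)\bigl((2\mu+\tilde c+\vec c(s))|\lambda|\bigr)$; after reindexing, the coefficient of $\Phi_{\mu+\tilde\mu+e_j}^{\lambda}(x')\Phi_{\mu}^{\lambda}(y')$ is $\tfrac{1}{\sqrt{2|\lambda|}}\bigl[\sqrt{\mu_j+\tilde\mu_j+1}\,F(\mu)-\sqrt{\mu_j+1}\,F(\mu+e_j)\bigr]$. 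The FTC only neutralizes the $|\lambda|^{-1/2}$ for the part of the difference coming from the argument of $\tau^{\frac12\gamma_2}\partial_\tau^{\gamma_1}m$; the $\mu$-dependence of $C$ and the mismatch $\sqrt{\mu_j+\tilde\mu_j+1}-\sqrt{\mu_j+1}$ leave extra terms carrying a bare $|\lambda|^{-1/2}$, which mere boundedness of $C$ cannot absorb. These leftovers are not ``purely bookkeeping'': they are precisely the origin of the admissible terms with $|\gamma_2|<|\gamma_1|$ (the stray factor behaves like $\tau_j^{-1/2}$ and lowers $\gamma_2$ by $e_j$ while keeping $|\gamma_1|-\tfrac12|\gamma_2|=\tfrac{|\alpha|+1}{2}$), whereas your accounting, in which every step raises both $|\gamma_1|$ and $|\gamma_2|$ by one, would only ever produce $|\gamma_1|=|\gamma_2|=|\alpha|$. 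To close the induction you must strengthen the hypothesis: either keep the weights $\sqrt{(\mu_j+a)|\lambda|}$ explicit through the iteration, or record that $C_{\mu,\tilde c,\vec c}$ is a product of square-root ratios whose discrete $\mu_j$-differences decay like $(1+\mu_j)^{-1}$, and only convert to $\tau^{\frac12\gamma_2}$ times a bounded constant at the very end. Relatedly, your appeal to Remark \ref{rem:convention-symbol-support} does not justify the uniform bound: compact support of $m$ gives an \emph{upper} bound on the argument, not the lower bound needed for $\sqrt{(\mu_j+1)/(2\mu_j+\tilde c_j+c_j(s))}$ to be bounded; what actually saves the estimate is the comparability of $\mu_j+O(1)$ with $2\mu_j+\tilde c_j+c_j(s)$ for the relevant $\tilde c$, together with the vanishing of the $\sqrt{\mu_j}$ factor at $\mu_j=0$ in the downward-shift terms.
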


Also we require the following kernel estimate which follows by combining Lemma $4.7$ and Remark $4.8$ of \cite{Bagchi-Garg-1}. 
\begin{lemma}[\cite{Bagchi-Garg-1}] 
\label{weighted-kernel-estimate-3} 
Let $\tilde{c} \in \mathbb{R}^{n_1}$ be fixed and $m$ is a compactly supported smooth function on $(\mathbb{R}_+)^{n_1} \times (\mathbb{R}^{n_2} \setminus \{0\})$. For any $L_1, L_2\in \mathbb{N}$, we can write 
$$ \left|x^\prime - y^\prime \right|^{4 L_1} \left|x^{\prime \prime} - y^{\prime \prime} \right|^{2 L_2} \int_{\mathbb{R}^{n_2}} \sum_{\mu} m((2\mu + \tilde{c})|\lambda|, \lambda) E_{\mu, \lambda} (x, y) \, d \lambda $$ 
as a finite linear combination of terms of the form 
\begin{align*}
& {x^{\prime}}^{\alpha_{1}}\int_{[0,1]^{N_1} \times \Omega^{N_2} \times [0,1]^{4 L_1}} \int_{\mathbb{R}^{n_2}} \mathfrak{A}_l(\lambda) \sum_{\mu} C_{\mu, \tilde{c}, \vec{c}} \left(\tau^{\frac{1}{2} \theta_1} \partial_\tau^{\theta_2}\partial_{\kappa}^{\beta} m\right) ((2\mu + \tilde{c} + \vec{c}(\omega)) |\lambda|,\lambda) \\ 
\nonumber & \quad \quad \quad \quad \quad \quad \quad \quad \quad \quad \quad \quad \quad \quad \quad \quad \quad \Phi_{\mu + \tilde{\mu}}^{\lambda}(x^{\prime}) \Phi_{\mu}^{\lambda}(y^{\prime}) e^{i \lambda \cdot (x^{\prime \prime} - y^{\prime \prime})} \, g(\omega) \, d \lambda \, d\omega, 
\end{align*}
where $N_1, N_2 \leq 2 L_2 - (|\beta| + l)$, $| \alpha_1 | \leq 2 L_2 - (|\beta|+l)$, $|\mathcal{\theta}_1| \leq |\mathcal{\theta}_2| \leq 4 L_1+ 4 L_2 - 2  (|\beta|+l)$, $|\mathcal{\theta}_2| - \frac{|\mathcal{\theta}_1|}{2} = 2 L_1 + 2 L_2 -  (|\beta|+l)- \frac{|\alpha_1|}{2} \geq 2 L_1 + L_2 - \frac{1}{2} (|\beta|+l)$, $C_{\mu, \tilde{c}, \vec{c}}$ is a bounded function of $\mu$ and $\vec{c}$, and $\mathfrak{A}_l$ is a continuous function on $\mathbb{R}^{n_2} \setminus \{0\}$ which is homogeneous of degree $-l$.
\end{lemma}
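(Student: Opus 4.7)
The plan is to reduce the identity to two successive applications of the standard ``move derivatives onto the symbol'' trick: one in the $x'$-variables via Lemma~\ref{first-layer-lem}, and one in the $x''$-variables via integration by parts against the oscillatory factor $e^{i\lambda\cdot(x''-y'')}$. First I would expand $|x'-y'|^{4L_1}=\sum_{|\alpha|=4L_1}c_\alpha (x'-y')^\alpha$, and for each monomial apply Lemma~\ref{first-layer-lem} with the symbol $m(\cdot,\lambda)$ treating $\lambda$ as a frozen parameter. This rewrites the quantity inside the $\lambda$-integral as a finite linear combination of expressions
\[
\int_{[0,1]^{4L_1}}\sum_\mu C_{\mu,\tilde c,\vec c}\,\bigl(\tau^{\gamma_2/2}\partial_\tau^{\gamma_1}m\bigr)\bigl((2\mu+\tilde c+\vec c(s))|\lambda|,\lambda\bigr)\,\Phi_{\mu+\tilde\mu}^\lambda(x')\Phi_\mu^\lambda(y')\,ds,
\]
with $|\gamma_2|\leq|\gamma_1|\leq 4L_1$ and $|\gamma_1|-|\gamma_2|/2=2L_1$; this accounts for the $[0,1]^{4L_1}$ factor in the target integration domain.

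Next I would absorb $|x''-y''|^{2L_2}$ by writing $|x''-y''|^{2L_2}e^{i\lambda\cdot(x''-y'')}=(-\Delta_\lambda)^{L_2}e^{i\lambda\cdot(x''-y'')}$ and integrating by parts $2L_2$ times in $\lambda$. Since $m$ is compactly supported inside $(\mathbb{R}_+)^{n_1}\times(\mathbb{R}^{n_2}\setminus\{0\})$, the support is bounded away from $\lambda=0$ and all boundary terms vanish. Applying $\Delta_\lambda^{L_2}$ via Leibniz's rule distributes the $\lambda$-derivatives among three families of factors: the symbol $m((2\mu+\tilde c+\vec c(s))|\lambda|,\lambda)$, where $\kappa$-derivatives generate the $\partial_\kappa^\beta m$ appearing in the claim while derivatives acting on the first slot (via the chain rule on $(2\mu+\tilde c+\vec c(s))|\lambda|$) produce additional $\partial_\tau$'s with factors $\mu_k+O(1)$ that combine with the earlier $\tau^{\gamma_2/2}\partial_\tau^{\gamma_1}$; the scaled Hermite functions $\Phi_\mu^\lambda(x')=|\lambda|^{n_1/4}\Phi_\mu(|\lambda|^{1/2}x')$, where each $\partial_{\lambda_j}$ produces either $|\lambda|^{-1}$ or a factor $x'_k\lambda_j|\lambda|^{-3/2}$ times $\partial_{x'_k}\Phi_\mu(|\lambda|^{1/2}x')$, the latter being expanded by the standard creation/annihilation identities as a bounded combination of shifted Hermite functions $\Phi_{\mu\pm e_k}^\lambda$, thereby producing the $x'^{\alpha_1}$ factor and the shift $\tilde\mu$; and the scalar $|\lambda|^{|\gamma_2|/2}$ produced in the first step, whose differentiation just yields further homogeneous powers of $\lambda$. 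All the purely $\lambda$-homogeneous contributions coalesce into $\mathfrak A_l(\lambda)$.

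To arrive at the precise structural form asserted, I would use the fundamental theorem of calculus (iteratively applied) to absorb differences between $m$ evaluated at perturbed arguments into one-parameter integrals; this introduces the auxiliary variables $\omega\in[0,1]^{N_1}\times\Omega^{N_2}$ with continuous weights $g(\omega)$. A degree count then yields the asserted constraints: every $\Delta_\lambda$ contributes two units of differentiation, which must be split between the $\kappa$-derivatives (total $|\beta|$), the negative homogeneity $-l$ of $\mathfrak A_l$, the extra $\partial_\tau$'s (accounted for by the change $\gamma\to\theta$), and the $|\alpha_1|$ powers of $x'$ generated by the action on $\Phi_\mu^\lambda$. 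Combined with the identity $|\gamma_1|-|\gamma_2|/2=2L_1$ from Step~1, this produces precisely $|\theta_1|\leq|\theta_2|\leq 4L_1+4L_2-2(|\beta|+l)$, $|\theta_2|-|\theta_1|/2=2L_1+2L_2-(|\beta|+l)-|\alpha_1|/2$, and $N_1,N_2,|\alpha_1|\leq 2L_2-(|\beta|+l)$.

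The main obstacle is the bookkeeping. One has to verify that every term generated by the product and chain rules over $L_2$ applications of $\Delta_\lambda$, combined with the output of Lemma~\ref{first-layer-lem}, fits into the claimed structural template, and in particular that the multi-index arithmetic closes exactly as stated. The most delicate single step is the translation of $\lambda$-derivatives of the scaled Hermite functions into a combination of shifted Hermite functions multiplied by a monomial $x'^{\alpha_1}$ with the right balance of powers of $|\lambda|$ absorbed into $\mathfrak A_l$; this is where the scaling relation $\Phi_\mu^\lambda(x')=|\lambda|^{n_1/4}\Phi_\mu(|\lambda|^{1/2}x')$ and the creation/annihilation identities must be wielded with care.
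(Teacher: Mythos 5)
The paper does not prove this lemma at all: it is imported verbatim from \cite{Bagchi-Garg-1}, obtained there by combining Lemma 4.7 with Remark 4.8, so there is no in-paper argument to compare against. Your outline is exactly the route one expects that reference to take (and which the surrounding text of this paper presupposes): dispose of $|x'-y'|^{4L_1}$ by Lemma~\ref{first-layer-lem} with $\lambda$ frozen, then trade $|x''-y''|^{2L_2}$ for $2L_2$ integrations by parts in $\lambda$ against $e^{i\lambda\cdot(x''-y'')}$ (no boundary terms, since the $\kappa$-support of $m$ is compact in $\mathbb{R}^{n_2}\setminus\{0\}$), and finally run the Leibniz/chain-rule bookkeeping over the three types of factors; your degree count reproduces the stated constraints, and indeed each $\lambda$-derivative raises $|\beta|+l+\tfrac{|\alpha_1|}{2}+\tfrac{|\theta_1|}{2}-|\theta_2|$ by exactly one, which is the content of the displayed equality.

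One point where your wording papers over the crux: when a $\lambda$-derivative falls on $\Phi_\mu^\lambda(x')$, the creation/annihilation expansion does \emph{not} give a ``bounded combination'' of shifted Hermite functions --- the coefficients grow like $\sqrt{\mu_k}$ (or like $\mu_k$ in the two-step shift formula \eqref{lambda-der}). These unbounded factors must be rewritten as $\big((2\mu_k+\tilde c_k+c_k)\,|\lambda|\big)^{1/2}|\lambda|^{-1/2}$ up to bounded errors, i.e.\ absorbed into the $\tau^{\frac12\theta_1}$ factor and the homogeneous function $\mathfrak{A}_l$, leaving only bounded constants $C_{\mu,\tilde c,\vec c}$; the same absorption is what forces the auxiliary compact parameters $\Omega^{N_2}$ and the continuous weights $g(\omega)$, beyond the $[0,1]$-variables coming from Taylor/FTC corrections. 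Your final degree count implicitly assumes this has been done, so the gap is one of rigor rather than of strategy, but in a complete write-up this absorption step is precisely where the lemma's structural form (bounded $C_{\mu,\tilde c,\vec c}$, $\tau^{\frac12\theta_1}$, $\mathfrak{A}_l$, $x'^{\alpha_1}$) is actually produced.
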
 

\subsection{Kernel estimates and pointwise sparse domination}
\label{subsec:results-for-Grsuhin-pseudo}

In this subsection we will state sparse domination results from \cite{BBGG-1} for general operators which satisfy various kernel estimates. 

For an operator $T \in \mathcal{B} \left( L^2({\mathbb{R}^{n_1+n_2}}) \right)$, and each $j \in \mathbb{N}$, we define 
\begin{align} \label{def:operator-countable-break}
T_{j} = T \circ \chi_{j} (G)
\end{align} 
where $\chi_{j} (G)$ is the Grushin multipliers corresponding to Hermite projection operators $\chi_{j} (\lambda)$ defined in \eqref{def:Hermite-projection-operator-chi-lambda}. Then we can decompose the operator $T = \sum_{j} T_j$,
with the convergence taken in the strong operator topology norm on $L^2({\mathbb{R}^{n_1+n_2}})$. It is clear that  $\chi_{j} (G)$ and $T_{j} = T \circ \chi_{j} (G)$ is a Hilbert-Schmidt operator on $L^2({\mathbb{R}^{n_1+n_2}})$. This implies each $T_j$ has an integral kernel and we denote it by $T_j(x,y)$.

\medskip
\noindent \textbf{\underline{$L^2$-conditions on the kernel}:} There exists some $R_{0} \in (0, \infty)$ such that for all $ \mathfrak{r} \in [0, R_{0}]$ and for every positive real number $\mathcal{K}_0$, we have 
\begin{align} 
\sup_{x\in \mathbb{R}^{n_1+n_2}} |B(x, 2^{-j/2})| \int_{\mathbb{R}^{n_1+n_2}} d(x,y)^{2\mathfrak{r}} |T_{j}(x,y)|^2 \, dy & \lesssim_{R_0} 2^{-j\mathfrak{r}}, \label{cond:General-hypo} \\ 
\sup_{x\in \mathbb{R}^{n_1+n_2}} |B(x, 2^{-j/2})| \int_{d(x,y)<\mathcal{K}_0} d(x,y)^{2\mathfrak{r}} |X_{x} T_{j}(x,y)|^2 \, dy & \lesssim_{R_0, \mathcal{K}_0} 2^{-j \mathfrak{r}} 2^{j} \label{cond:General-hypo-grad}. 
\end{align}

\noindent \textbf{\underline{$L^{\infty}$-conditions on the kernel}:} There exists some $R_{0} \in (0, \infty)$ such that for all $\mathfrak{r} \in [0, R_{0}]$ and for every positive real number $\mathcal{K}_0$, we have
\begin{align} 
\sup_{x\in \mathbb{R}^{n_1+n_2}} \sup_{y \in \mathbb{R}^{n_1+n_2}} |B(x, 2^{-j/2})|^{1/2} |B(y, 2^{-j/2})|^{1/2} d(x,y)^{\mathfrak{r}} |T_{j}(x,y)| & \lesssim_{R_0} 2^{-j\mathfrak{r}/2}, \label{cond:General-hypo-sup} \\ 
\sup_{x \in \mathbb{R}^{n_1+n_2}} \sup_{y \in \mathbb{R}^{n_1+n_2}} |B(x, 2^{-j/2})|^{1/2} |B(y, 2^{-j/2})|^{1/2} d(x,y)^{\mathfrak{r}} |X_{y}T_{j}(x,y)| & \lesssim_{R_0} 2^{-j\mathfrak{r}/2} 2^{j/2}. \label{cond:General-hypo-y-grad-sup} \\ 
\sup_{x \in \mathbb{R}^{n_1+n_2}} \sup_{d(x,y)<\mathcal{K}_0} |B(x, 2^{-j/2})|^{1/2} |B(y, 2^{-j/2})|^{1/2} d(x,y)^{\mathfrak{r}} |X_{x}T_{j}(x,y)| & \lesssim_{R_0, \mathcal{K}_0} 2^{-j\mathfrak{r}/2} 2^{j/2}, \label{cond:General-hypo-grad-sup} 
\end{align}
Using the above conditions on the kernel, in our earlier work \cite{BBGG-1} we have proved the following general sparse domination principles which will be crucial to our purpose here.
\begin{theorem}[Theorem 3.7, \cite{BBGG-1}] \label{thm:weak-type-bound-operator}
Let $T \in \mathcal{B} \left( L^2({\mathbb{R}^{n_1+n_2}}) \right)$ and $T_j$ be as defined by \eqref{def:operator-countable-break}. Suppose that the integral kernels $T_j(x,y)$ satisfy condition \eqref{cond:General-hypo-sup} and \eqref{cond:General-hypo-y-grad-sup} for some $R_0 \geq Q + \frac{1}{2}$, then $T$ is weak type $(1,1)$. 
\end{theorem}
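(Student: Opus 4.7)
The plan is to adapt the classical Calder\'on--Zygmund argument to the homogeneous space $(\mathbb{R}^{n_1+n_2}, d, |\cdot|)$, making critical use of the dyadic decomposition $T = \sum_j T_j$ and of the two kernel hypotheses. Given $f \in L^1$ and a level $\alpha > 0$, I would apply the Calder\'on--Zygmund decomposition at height $\alpha$ relative to the metric $d$ and Lebesgue measure, writing $f = g + b$ with $b = \sum_Q b_Q$, where $\{Q\}$ is a disjoint family of generalised-dyadic cubes on which $\int b_Q = 0$, $\|b_Q\|_1 \lesssim \alpha |Q|$, $\sum_Q |Q| \lesssim \alpha^{-1}\|f\|_1$, $\|g\|_\infty \lesssim \alpha$ and $\|g\|_2^2 \lesssim \alpha \|f\|_1$. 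The contribution of the good part is handled directly by Chebyshev and the $L^2$-boundedness of $T$: $|\{|Tg| > \alpha/2\}| \lesssim \alpha^{-2}\|Tg\|_2^2 \lesssim \alpha^{-1}\|f\|_1$.

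For the bad part, choose a dilation $Q^* = \kappa Q$ large enough that $d(x, y_Q) \gtrsim r_Q$ for $x \notin Q^*$ and $y_Q$ the centre of $Q$. Since $|\bigcup Q^*| \lesssim \alpha^{-1}\|f\|_1$, it is enough to prove
\[
\int_{(Q^*)^c} |Tb_Q(x)|\, dx \lesssim \|b_Q\|_1
\]
for each $Q$, uniformly in $Q$. Fix $Q$ with $r_Q \sim 2^{-j_Q/2}$, split $Tb_Q = \sum_j T_j b_Q$, and treat the scales $j \geq j_Q$ and $j < j_Q$ separately.

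For $j \geq j_Q$ (small scales), I would use the size bound \eqref{cond:General-hypo-sup} directly together with a dyadic annular decomposition
\[
\{x \in (Q^*)^c\} = \bigcup_{k \geq 0} A_k, \qquad A_k := \{x : 2^k r_Q \leq d(x, y_Q) < 2^{k+1} r_Q\}.
\]
Applying \eqref{cond:General-hypo-sup} with $\mathfrak{r} = R_0$ and using the doubling estimate $|B(y, d(x,y))| \lesssim (1 + 2^{j/2} d(x,y))^Q |B(y, 2^{-j/2})|$, together with $|A_k| \lesssim |B(y_Q, 2^{k+1} r_Q)|$, yields geometric decay in $k$ provided $R_0 > Q$, and summing in $j \geq j_Q$ produces a bound $\lesssim \|b_Q\|_1$. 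For $j < j_Q$ (large scales), I would invoke the cancellation $\int b_Q = 0$ to write
\[
T_j b_Q(x) = \int_Q \bigl(T_j(x, y) - T_j(x, y_Q)\bigr)\, b_Q(y)\, dy,
\]
and estimate the difference by integrating the $y$-gradient \eqref{cond:General-hypo-y-grad-sup} along a horizontal path from $y$ to $y_Q$ inside $Q^*$, producing a factor $d(y, y_Q) \lesssim r_Q$ together with the scale factor $2^{j/2}$. Integrating in $x$ via the same annular decomposition and summing geometrically in $k$ and in $j_Q - j \geq 0$ again gives a contribution $\lesssim \|b_Q\|_1$.

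The main technical obstacle is bookkeeping the powers of $2^{j/2}$ and of $r_Q$ to make sure both sums converge. For the large-scale regime, the extra $2^{j/2}$ coming from the gradient must be compensated by choosing $\mathfrak{r}$ slightly larger than $Q - 1$; combined with the doubling dimension $Q$ this is exactly where the hypothesis $R_0 \geq Q + 1/2$ enters, ensuring simultaneous absolute convergence of the series in both regimes. Once these uniform-in-$Q$ bounds are in hand, summing over $Q$ gives $|\{x \in (\bigcup Q^*)^c : |Tb(x)| > \alpha/2\}| \lesssim \alpha^{-1}\sum_Q \|b_Q\|_1 \lesssim \alpha^{-1}\|f\|_1$, completing the weak-type $(1,1)$ estimate.
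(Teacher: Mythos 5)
This statement is not proved in the present paper at all: it is imported verbatim as Theorem 3.7 of \cite{BBGG-1}, so there is no internal proof to compare with. Your Calder\'on--Zygmund argument on the homogeneous space $(\mathbb{R}^{n_1+n_2},d,|\cdot|)$ is the natural route to such a result and, in outline, it does work: good part by $L^2$-boundedness, bad part split at the scale $j_Q$ of the cube, size condition \eqref{cond:General-hypo-sup} for $j\geq j_Q$, cancellation plus the $y$-gradient condition \eqref{cond:General-hypo-y-grad-sup} for $j<j_Q$, and the telescoping factor $r_Q 2^{j/2}$ absorbing the $2^{j/2}$ loss from the gradient.

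Two points in your sketch need more care than you give them. First, in the annular estimates the factor $\left|B(x,2^{-j/2})\right|^{-1/2}$ depends on the integration variable $x$; if you simply bound it by its supremum over the annulus $A_k$ (which is what your listed ingredients amount to), you pick up an extra $(2^{j/2}2^k r_Q)^{Q/2}$ and the convergence threshold becomes $R_0>3Q/2$, which is \emph{not} covered by the hypothesis $R_0\geq Q+\tfrac12$ since $Q=n_1+2n_2\geq 3$. To reach the threshold $R_0>Q$ you assert, you must integrate the half-power of the ball volume, e.g.\ by Cauchy--Schwarz on the annulus together with $\int_{B(y,\rho)}|B(x,2^{-j/2})|^{-1}\,dx\lesssim (2^{j/2}\rho)^{Q}$ for $\rho\geq 2^{-j/2}$; with that refinement both regimes close under $R_0\geq Q+\tfrac12$. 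Second, in the large-scale regime the mean value inequality must be justified through the control distance: connect $y$ to $y_Q$ by a horizontal (admissible) curve of length $\lesssim d(y,y_Q)$ so that only the horizontal gradient $X_y$ appears, note that $|B(z,2^{-j/2})|\sim|B(y_Q,2^{-j/2})|$ along the curve because $d(z,y_Q)\lesssim r_Q\leq 2^{-j/2}$, and for $x$ with $r_Q\lesssim d(x,y_Q)\lesssim 2^{-j/2}$ use the hypotheses with $\mathfrak{r}=0$ (this is why they are assumed for all $\mathfrak{r}\in[0,R_0]$, not just $\mathfrak{r}=R_0$). Your parenthetical accounting of where $R_0\geq Q+\tfrac12$ enters (``$\mathfrak{r}$ slightly larger than $Q-1$'') is muddled, but the numerology is harmless once the two corrections above are made.
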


\begin{theorem}[Theorem 3.2, \cite{BBGG-1}] \label{thm:main-sparse}
Let $T \in \mathcal{B} \left( L^2({\mathbb{R}^{n_1+n_2}}) \right)$ and $T_j$ be as defined by \eqref{def:operator-countable-break}. Suppose that the integral kernels $T_j(x,y)$ satisfy conditions \eqref{cond:General-hypo} and \eqref{cond:General-hypo-grad} for some $R_0 > Q/2$. Then for every compactly supported bounded measurable function $f$ there exists a sparse family $S\subset \mathcal{S}$ such that
\begin{align} \label{sparseHormander1-general} 
|Tf(x)| \lesssim_{T} \mathcal{A}_{2, S} f(x),
\end{align} 
for almost every $x \in \mathbb{R}^{n_1 + n_2}$. 
\end{theorem}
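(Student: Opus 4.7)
The plan is to follow the modern framework for pointwise sparse domination developed by Lerner--Ombrosi \cite{Lerner-Ombrosi-pointwaise-sparse2020} and Lorist \cite{Lorist-pointwaise-sparse2021}, adapted to the space of homogeneous type $(\mathbb{R}^{n_1+n_2}, d, |\cdot|)$. The abstract principle reduces sparse $L^2$-domination of $T$ to two weak-type $L^2 \to L^{2,\infty}$ bounds: one for $T$ itself, and one for the grand maximal truncation operator
$$\mathcal{M}_T^{\#} f(x) := \sup_{B \ni x} \, \esssup_{\xi, \zeta \in B} \, \left| T\!\left( f \chi_{(3B)^c} \right)(\xi) - T\!\left( f \chi_{(3B)^c} \right)(\zeta) \right|,$$
where the supremum runs over Grushin balls $B$ containing $x$. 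Once both bounds are in place, a stopping-time argument on Christ cubes constructs the sparse family.

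The main analytic work lies in establishing the weak-type bound for $\mathcal{M}_T^{\#}$. For a ball $B$ of radius $r$, split the countable sum $T = \sum_{j} T_j$ at the natural cutoff scale $j_0$ defined by $2^{-j_0/2} \sim r$. For the small-scale tail $\sum_{j > j_0} T_j$ acting on $f\chi_{(3B)^c}$, I would use Cauchy--Schwarz together with the weighted Plancherel estimate \eqref{cond:General-hypo} for some exponent $\mathfrak{r}$ chosen to satisfy $Q/2 < \mathfrak{r} \leq R_0$: since $d(\xi, y) \gtrsim r$ for $\xi \in B$ and $y \in (3B)^c$, insertion and extraction of the factor $d(\xi,y)^{\mathfrak{r}}$, followed by doubling of the ball volumes of homogeneous dimension $Q$, produces a geometric sum in $j$ of the form $\sum_{j > j_0} 2^{-j(\mathfrak{r} - Q/2)}$, which converges precisely when $\mathfrak{r} > Q/2$. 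For the large-scale sum $\sum_{j \leq j_0} T_j$, I would exploit the gradient bound \eqref{cond:General-hypo-grad} by writing
$$T_j f(\xi) - T_j f(\zeta) = \int_{\mathbb{R}^{n_1+n_2}} \left( T_j(\xi, y) - T_j(\zeta, y) \right) f(y) \, dy,$$
and controlling the kernel difference by integrating $X_x T_j(\cdot, y)$ along a Grushin path of length $\lesssim r$ between $\xi$ and $\zeta$; the factor $r \cdot 2^{j/2} \lesssim 1$ for $j \leq j_0$, paired again with a weighted $L^2$ estimate in $y$, delivers summability in $j$. This yields the pointwise control of $\mathcal{M}_T^{\#}f$ by a Hardy--Littlewood-type maximal function of $|f|^2$, hence the desired weak $(2,2)$ bound.

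The bound $T: L^2 \to L^{2,\infty}$ reduces to weak $(1,1)$ boundedness, which is established by a Calder\'on--Zygmund decomposition on the space of homogeneous type: the kernel conditions \eqref{cond:General-hypo} and \eqref{cond:General-hypo-grad}, together with the assumed $L^2$-boundedness of $T$, give integrable off-diagonal tails of $T_j$, and a dyadic scale-by-scale analysis controls the ``bad part''. With the two weak-type bounds in hand, one selects principal cubes $\mathcal{Q}$ in $\mathcal{S}$ where the $L^2$-average of $f$ on $\mathcal{Q}$ exceeds a large multiple of its value on the parent, or where $\mathcal{M}_T^{\#} f$ surpasses a large multiple of a corresponding local average. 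The weak $L^{2,\infty}$ bounds guarantee that the union of stopping cubes strictly inside any fixed parent occupies at most a fixed fraction (say $1/2$) of the parent, producing an $\eta$-sparse family with $\eta = 1/2$. Telescoping the local oscillation estimate across principal generations and bounding each local term by $\langle |f|^2 \rangle_{\mathcal{Q}}^{1/2}$ yields $|Tf(x)| \lesssim \mathcal{A}_{2,S}f(x)$.

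The hardest step will be the weak-type bound for $\mathcal{M}_T^{\#}$: one must thread the needle between the integrability requirement $\mathfrak{r} > Q/2$ in \eqref{cond:General-hypo} and the summability of the gradient contribution across all large scales, and must handle the bi-regime behaviour of the Grushin metric, where $|B(x, 2^{-j/2})| \sim 2^{-j(n_1+n_2)/2} \max\{2^{-j/2}, |x'|\}^{n_2}$ transitions at $|x'| \sim 2^{-j/2}$. Bookkeeping of doubling constants through this transition, while keeping the weighted $L^2$ control uniform in $x$, is where the bulk of the technical difficulty resides.
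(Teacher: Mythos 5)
Your proposal follows essentially the same route as the actual proof: Theorem \ref{thm:main-sparse} is not reproved in this paper but imported from \cite{BBGG-1}, where it is established precisely via the Lerner--Ombrosi/Lorist pointwise sparse domination principle \cite{Lerner-Ombrosi-pointwaise-sparse2020, Lorist-pointwaise-sparse2021} on the homogeneous space $(\mathbb{R}^{n_1+n_2},d,|\cdot|)$: one verifies an $L^2\to L^{2,\infty}$ bound for $T$ and for the grand maximal truncation operator $\mathcal{M}^{\#}_{T}$, splits $T=\sum_j T_j$ at the scale $2^{-j_0/2}\sim r(B)$, uses \eqref{cond:General-hypo} with some $Q/2<\mathfrak{r}\le R_0$ for the fine scales $j>j_0$, and the gradient condition \eqref{cond:General-hypo-grad} (oscillation along a control-distance path, factor $r\,2^{j/2}$) for the coarse scales $j\le j_0$; the stopping-time construction of the $\eta$-sparse family on Christ cubes is then standard. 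So the architecture and the way the two kernel hypotheses enter are the same as in the cited proof.

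One detail in your write-up is wrong, though harmlessly so. You propose to obtain the $L^2\to L^{2,\infty}$ bound for $T$ by ``reducing to weak $(1,1)$'' via a Calder\'on--Zygmund decomposition based on \eqref{cond:General-hypo} and \eqref{cond:General-hypo-grad}. This is neither needed nor available: the weak $(2,2)$ bound is immediate from the standing hypothesis $T\in\mathcal{B}\left(L^2(\mathbb{R}^{n_1+n_2})\right)$, whereas a genuine weak $(1,1)$ bound cannot be extracted from the $L^2$-type weighted Plancherel conditions with only $R_0>Q/2$ --- in this framework weak $(1,1)$ requires the pointwise $L^{\infty}$-type kernel conditions with $R_0\ge Q+\tfrac12$ (compare Theorem \ref{thm:weak-type-bound-operator}), which is exactly why the conclusion here is domination by $\mathcal{A}_{2,S}$ rather than $\mathcal{A}_{S}$. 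If you simply replace that step by the trivial use of the assumed $L^2$-boundedness, your argument matches the proof of Theorem 3.2 of \cite{BBGG-1}. Also note that in the coarse-scale estimate the summability comes from $r\,2^{j/2}=2^{(j-j_0)/2}$ being geometrically small for $j\le j_0$ (not merely $\lesssim 1$), together with the weighted factor $d(x,y)^{2\mathfrak{r}}$, $\mathfrak{r}>Q/2$, to sum the annuli in $y$; this is worth stating explicitly, and the restriction $d(x,y)<\mathcal{K}_0$ in \eqref{cond:General-hypo-grad} is compatible with it because the sparse argument is run locally on compactly supported $f$.
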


\begin{theorem}[Theorem 3.3, \cite{BBGG-1}] \label{thm:main-sparse-more-derivative}
Let $T \in \mathcal{B} \left( L^2({\mathbb{R}^{n_1+n_2}}) \right)$ and $T_j$ be as defined by \eqref{def:operator-countable-break}. Suppose that the integral kernels $T_j(x,y)$ satisfy conditions \eqref{cond:General-hypo-sup} and \eqref{cond:General-hypo-y-grad-sup} for some $R_0 \geq Q + \frac{1}{2}$, and condition \eqref{cond:General-hypo-grad-sup} for some $R_0 > Q$. Then for every compactly supported bounded measurable function $f$ there exists a sparse family $S\subset \mathcal{S}$ such that
\begin{align}
\label{sparseHormander2-general}
|Tf(x)| \lesssim_{T} \mathcal{A}_{S} f(x),
\end{align}
for almost every $x \in \mathbb{R}^{n_1+n_2}$. 
\end{theorem}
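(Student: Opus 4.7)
The plan is to deduce the pointwise sparse bound via the Lerner--Ombrosi--Lorist framework adapted to the homogeneous space $(\mathbb{R}^{n_1+n_2}, d, |\cdot|)$ with Christ's dyadic system. The framework requires two ingredients: weak $(1,1)$ boundedness of $T$, and weak $(1,1)$ boundedness of the grand maximal truncation operator
\[
\mathcal{M}_T^\# f(x) := \sup_{B \ni x} \operatorname*{ess\,sup}_{\xi, \eta \in B} \bigl| T(f\chi_{(3B)^c})(\xi) - T(f\chi_{(3B)^c})(\eta) \bigr|.
\]
Given both, a standard stopping-time argument on generalized dyadic cubes produces the sparse family $S$ and the pointwise bound $|Tf(x)| \lesssim \mathcal{A}_S f(x)$.

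The first ingredient is immediate from Theorem \ref{thm:weak-type-bound-operator}, since the hypotheses \eqref{cond:General-hypo-sup} and \eqref{cond:General-hypo-y-grad-sup} with $R_0 \geq Q + \tfrac{1}{2}$ are precisely its premises. For the second ingredient, fix a ball $B$ of radius $r$ and $\xi, \eta \in B$, decompose $T = \sum_j T_j$, and analyze the kernel increment $T_j(\xi, y) - T_j(\eta, y)$ for $y \in (3B)^c$. Split the integration according to whether $d(\xi, y) < \mathcal{K}_0$ or not. On the near regime, path-connect $\xi$ to $\eta$ and apply the $x$-gradient bound \eqref{cond:General-hypo-grad-sup}, so that the kernel increment is controlled by
\[
d(\xi,\eta) \cdot 2^{j/2} \cdot |B(\xi, 2^{-j/2})|^{-1/2} |B(y, 2^{-j/2})|^{-1/2} d(\xi,y)^{-R_0}.
\]
Because $R_0 > Q$ and the Grushin volume satisfies $|B(x, s r)| \lesssim (1+s)^Q |B(x,r)|$, integrating against $|f|$ and summing in $j$ telescopes to an expression pointwise dominated by the Hardy--Littlewood maximal function $\mathcal{M}_{HL} f(x)$ on $(\mathbb{R}^{n_1+n_2}, d, |\cdot|)$. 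On the far regime $d(\xi,y) \geq \mathcal{K}_0$, apply \eqref{cond:General-hypo-sup} separately at $\xi$ and $\eta$, whose $R_0 \geq Q + \tfrac12$ decay again yields $\mathcal{M}_{HL} f(x)$ after summation. Hence $\mathcal{M}_T^\# f \lesssim \mathcal{M}_{HL} f$ pointwise, giving weak $(1,1)$.

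With both weak $(1,1)$ bounds in hand, run the sparse extraction: for each Christ cube $Q$, let $E_Q \subset Q$ be the set where either $T(f\chi_{3Q})$ or $\mathcal{M}_T^\#(f\chi_{3Q})$ exceeds $C_0 \cdot |3Q|^{-1}\int_{3Q}|f|$. The weak $(1,1)$ estimates ensure $|E_Q| \leq |Q|/2$ for $C_0$ large enough, so an iterative Calder\'on--Zygmund decomposition on $Q \setminus E_Q$ at doubled averages produces an $\eta$-sparse family $S$. Unfolding the construction and using that averages over $Q_0 \setminus \bigcup E_{Q_j}$ are controlled by $|Q_0|^{-1}\int_{Q_0}|f|$ via the maximal truncation, one obtains $|Tf(x)| \lesssim \mathcal{A}_S f(x)$ almost everywhere.

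The main obstacle will be the kernel increment analysis underlying $\mathcal{M}_T^\#$: the $x$-gradient bound \eqref{cond:General-hypo-grad-sup} is local, so the transition between near and far regimes forces a Minkowski-type split, and the summability threshold $R_0 > Q$ is tight and must be tracked through the doubling behavior. In addition, the asymmetric Grushin volume $|B(x,r)| \sim r^{n_1+n_2}\max\{r, |x'|\}^{n_2}$ couples the scales $2^{-j/2}$ (dictated by $T_j$) and $r$ (dictated by $B$) in a non-trivial way, so one must invoke the polynomial ball-doubling bound at each recombination step to symmetrize the estimates before summing in $j$.
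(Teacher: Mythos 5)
Your overall strategy is the same as the one behind this statement: the paper does not reprove it but quotes Theorem 3.3 of \cite{BBGG-1}, and (as the methodology discussion in Subsection \ref{subsec:intro-methodology-proofs} indicates) that proof runs exactly along the Lerner--Ombrosi/Lorist lines you describe: weak $(1,1)$ of $T$ (your appeal to Theorem \ref{thm:weak-type-bound-operator} is correct), a weak $(1,1)$ bound -- in fact a pointwise bound by the Hardy--Littlewood maximal function $\mathcal{M}$ -- for the grand maximal truncation operator, and then the stopping-time extraction of the sparse family on Christ cubes.

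The gap is in the quantitative core of the second ingredient. You justify the summation in $j$ by ``$R_0>Q$ and $|B(x,sr)|\lesssim(1+s)^{Q}|B(x,r)|$'', but doubling alone does not close the argument at the stated thresholds: symmetrizing via $|B(y,2^{-j/2})|^{-1/2}\lesssim(1+2^{j/2}d(\xi,y))^{Q/2}|B(\xi,2^{-j/2})|^{-1/2}$ and then rescaling via $|B(\xi,2^{-j/2})|^{-1}\lesssim(1+2^{j/2}d(\xi,y))^{Q}|B(\xi,d(\xi,y))|^{-1}$ loses a total factor $(1+2^{j/2}d(\xi,y))^{3Q/2}$, so the $j$- and annuli-sums only converge for $R_0>3Q/2$, which is strictly stronger than $R_0>Q$ and $R_0\ge Q+\tfrac12$ for every admissible $Q=n_1+2n_2$. (The alternative of discarding the volume factors through $|B(z,2^{-j/2})|^{-1/2}\lesssim 2^{jQ/4}$ fails as well: after summing in $j$ one is left with $\int_{(3B)^c} d(\xi,y)^{-R_0}|f|\,dy$, whose annuli sum produces $\rho^{-R_0}|B(\xi,\rho)|\,\mathcal{M}f(\xi)$, unbounded in $|\xi'|$.) What makes $R_0>Q$ suffice is not doubling but the explicit Grushin volume $|B(x,r)|\sim r^{n_1+n_2}\max\{r,|x'|\}^{n_2}$: since the factor $r^{n_1+n_2}$ is center-free and $\bigl||x'|-|y'|\bigr|\le d(x,y)$, one has the two-center estimate $|B(x,s)|^{1/2}|B(y,s)|^{1/2}\gtrsim\bigl(\tfrac{s}{s+d(x,y)}\bigr)^{Q}|B(x,s+d(x,y))|$, and with it the loss is only $(1+2^{j/2}d(\xi,y))^{Q}$, so the sums converge precisely when $R_0>Q$. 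Two further bookkeeping points: you must retain a small power $d(\xi,y)^{-\epsilon}$, $\epsilon=R_0-Q$, before integrating in $y$ (otherwise the leftover $\int_{(3B)^c}|B(\xi,d(\xi,y))|^{-1}|f|\,dy$ is not controlled by $\mathcal{M}f$), and besides the $\mathcal{K}_0$-split in $y$ you also need a split in $j$ at $2^{j/2}\sim r^{-1}$, since the gradient gain $r2^{j/2}$ from \eqref{cond:General-hypo-grad-sup} is only summable for $2^{j/2}\le r^{-1}$ and for larger $j$ one must fall back on \eqref{cond:General-hypo-sup} at $\xi$ and $\eta$ separately; the restriction $d(\cdot,y)<\mathcal{K}_0$ is harmless because only scales $2^{-j/2}\le 1$ occur, so one may take $\mathcal{K}_0\ge1$. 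With these repairs your outline matches the cited proof.
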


\begin{remark}
\label{choice:partition-of-unity}
It is easy to see that the above results hold true if we choose any arbitrary smooth partition of unity $\{ \psi_j \}_{j \in \mathbb{N}}$ of $(0, \infty)$ and decompose operator $T$ into a countable sum $\widetilde{T}_j = T \circ \psi_j (G)$, and assume all the conditions \eqref{cond:General-hypo}--\eqref{cond:General-hypo-grad-sup} for $\widetilde{T}_j$.
\end{remark}

\section{Weighted kernel estimates and proofs of results of Subsection \ref{subsec:main-results-general-family}} \label{sec:mauceri-type-weighted-kernel-estimates}

Choose $\psi_0 \in C_c^\infty((-2,2))$ and $\psi_1 \in C_c^\infty((1/2,2))$ such that $0 \leq \psi_0 (\eta), \psi_1 (\eta) \leq 1$, and 
\begin{align} \label{def:spectral-break} 
\sum_{j=0}^\infty \psi_j(\eta) = 1 \quad \text{and} \quad \sum_{j=-\infty}^\infty \psi_1(2^j \eta) = 1, 
\end{align} 
for all $\eta \geq 0$, where $\psi_j (\eta) = \psi_1 \left( 2^{-(j-1)} \eta \right)$ for $j \geq 2.$  

We shall also make use of the following notation: for each $q \in \mathbb{N}$, we define $\Upsilon^q$ on $\mathbb{R}^{n_2} \setminus \{0\}$ by 
\begin{align} \label{def:spectral-break-kappa-variable} 
\Upsilon^{q}(\kappa) = \sum_{j = -\infty}^q \psi_1(2^j |\kappa|). 
\end{align} 

Given $\Theta \in \mathcal{B} \left( L^2({\mathbb{R}^{n_1+n_2}}) \right)$, we break it into a countable sum of operators as follows. For each $j \in \mathbb{N}$, we define 
\begin{align} \label{def:operator-countable-break-smooth}
\Theta_j = \Theta \circ S_j 
\end{align} 
with $S_{j}$ being the Grushin multiplier operator $S_{j} = \psi_{j} (G).$ 

We first establish the following basic technical result showing that the derivatives of operators $S_j(\lambda)$ would produce finitely many shifts with the number of such shifts depending only on the order of the derivative and the ambient space dimension. Let us denote by $D_j$ the commutator on $\mathbb{R}^{n_1}$ by the multiplication operator $x'_j$, that is, $D_j (M) = [x_j, M]$ for any operator $M$ on functions on $\mathbb{R}^{n_1}$. Recall also that $\chi_{j}(\lambda)$ denotes the (scaled) Hermite Projection operator $\sum_{2^j \leq (2k+ n_1)|\lambda| < 2^{j+1}} P_k (\lambda)$. 
\begin{lemma} \label{lem:finite-shifts}
There exists a constant $C>0$ (depending only on $n_1$ and $n_2$) such that for any $\nu \in \mathbb{N}^{n_1}$ and $\vartheta \in \mathbb{N}^{n_2}$, 
\begin{align*}
\chi_{j'}(\lambda) \left( D^{\nu} \partial_{\lambda}^{\vartheta} S_{j}(\lambda) \right) \equiv 0 
\end{align*}
whenever $|j'-j| > C \left( |\nu|+ 2 |\vartheta| \right)$.
\end{lemma}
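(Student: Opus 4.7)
The plan is to track the $H(\lambda)$-spectral support through the applications of $\partial_{\lambda}$ and $D_l$, and then to observe that a sufficiently large gap $|j-j'|$ forces the range of $\chi_{j'}(\lambda)$ to be disjoint from that support. The starting point is the spectral expansion $S_{j}(\lambda) = \psi_{j}(H(\lambda)) = \sum_{k\ge 0}\psi_{j}\bigl((2k+n_{1})|\lambda|\bigr)\,P_{k}(\lambda)$, which, in the scaled Hermite basis $\{\Phi^{\lambda}_{\mu}\}$, is supported on diagonal pairs $(\mu, \mu)$ with $|\mu|\in I_{j}(\lambda):=\{k:(2k+n_{1})|\lambda|\in\supp\psi_{j}\}$. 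First I would establish two \emph{level-shift lemmas}: that each $D_{l}$ shifts input/output level indices by at most one, and that each $\partial_{\lambda_{k}}$ shifts them by exactly two.

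The level-shift for $D_{l}$ is immediate from the identity $x'_{l}=(2|\lambda|)^{-1}\bigl(A_{l}(\lambda)+A_{l}(\lambda)^{*}\bigr)$ together with the creation/annihilation relations $A_{l}(\lambda)\Phi^{\lambda}_{\mu}=\sqrt{2|\lambda|\mu_{l}}\,\Phi^{\lambda}_{\mu-e_{l}}$ and $A_{l}(\lambda)^{*}\Phi^{\lambda}_{\mu}=\sqrt{2|\lambda|(\mu_{l}+1)}\,\Phi^{\lambda}_{\mu+e_{l}}$, so that multiplication by $x'_{l}$ changes $|\mu|$ by $\pm 1$; iterating, $D^{\nu}$ produces a matrix whose input and output levels each shift by at most $|\nu|$. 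For $\partial_{\lambda_{k}}$, the cleanest route is direct: since $\Phi^{\lambda}_{\mu}(x) = \prod_{i}|\lambda|^{1/4}\Phi_{\mu_{i}}(|\lambda|^{1/2}x_{i})$ depends on $\lambda$ only through $|\lambda|$, the product rule combined with the one-dimensional recursion $y\Phi'_{n}(y) = \tfrac{1}{2}\sqrt{n(n-1)}\,\Phi_{n-2}(y) - \tfrac{1}{2}\Phi_{n}(y) - \tfrac{1}{2}\sqrt{(n+1)(n+2)}\,\Phi_{n+2}(y)$ yields
\[
\partial_{\lambda_{k}}\Phi^{\lambda}_{\mu} \;=\; \frac{\lambda_{k}}{4|\lambda|^{2}}\sum_{l=1}^{n_{1}}\Bigl[\sqrt{\mu_{l}(\mu_{l}-1)}\,\Phi^{\lambda}_{\mu-2e_{l}} - \sqrt{(\mu_{l}+1)(\mu_{l}+2)}\,\Phi^{\lambda}_{\mu+2e_{l}}\Bigr],
\]
living purely in levels $|\mu|\pm 2$ (the would-be level-$|\mu|$ contribution cancels, consistent with the normalization identity $(\Phi^{\lambda}_{\mu},\partial_{\lambda_{k}}\Phi^{\lambda}_{\mu})=0$). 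Iterating, $\partial_{\lambda}^{\vartheta}P_{k}(\lambda)$ is supported in level bands within $2|\vartheta|$ of $k$ on each side.

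Combining the two level-shift statements, $D^{\nu}\partial_{\lambda}^{\vartheta}S_{j}(\lambda)$ has matrix elements only between pairs of levels $(|\mu|_{\mathrm{in}}, |\mu|_{\mathrm{out}})$ each within $|\nu|+2|\vartheta|$ of $I_{j}(\lambda)$. Composing with $\chi_{j'}(\lambda)$ on the left restricts the output level to $I'_{j'}(\lambda):=\{k:(2k+n_{1})|\lambda|\in[2^{j'-1},2^{j'})\}$, so non-vanishing forces
\[
\mathrm{dist}\bigl([2^{j'-1},2^{j'}),\,\supp\psi_{j}\bigr)\;\le\;2|\lambda|\,(|\nu|+2|\vartheta|).
\]
On the other hand, $\chi_{j'}(\lambda)\not\equiv 0$ requires $n_{1}|\lambda|<2^{j'}$ (take $k=0$), and $S_{j}(\lambda)\not\equiv 0$ likewise requires $n_{1}|\lambda|<2^{j}$, hence $|\lambda|<2^{\min(j,j')}/n_{1}$. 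For $|j-j'|\ge 2$ the left-hand side above is at least $2^{\max(j,j')-3}$; combining gives $2^{|j-j'|-3}\le(2/n_{1})(|\nu|+2|\vartheta|)$, which since $\log_{2}x\le x$ delivers the claimed linear bound $|j-j'|\le C(|\nu|+2|\vartheta|)$ with a constant $C$ depending only on $n_{1}$ and $n_{2}$.

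The main technical point is the derivation of the $\partial_{\lambda_{k}}\Phi^{\lambda}_{\mu}$ identity above; once it is in hand the rest is pure bookkeeping of spectral supports combined with the elementary observation that $|\lambda|$ is a priori bounded by the dyadic scale of either multiplier. Edge cases ($j=0$, or the harmless overlap $|j-j'|=1$ when $|\nu|+2|\vartheta|=0$) are absorbed by enlarging $C$.
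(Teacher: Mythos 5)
Your proposal is correct and follows essentially the same route as the paper: both arguments rest on the two level-shift facts (multiplication by $x'_l$, i.e.\ each factor in $D^{\nu}$, moves the scaled Hermite index by $\pm 1$ via the creation/annihilation relations, while each $\partial_{\lambda_k}$ moves it by $\pm 2$ via the recursion for $\partial_{|\lambda|}\Phi^{\lambda}_{\mu}$, which is exactly the identity \eqref{lambda-der} the paper quotes), followed by disjointness of the dyadic spectral supports of $\chi_{j'}(\lambda)$ and $\psi_j$; your final step is merely a more quantitative version of the paper's one-line conclusion. The only caveat, which your ``enlarge $C$'' remark does not literally fix since $C\cdot 0=0$, is the degenerate case $|\nu|+2|\vartheta|=0$, where $\chi_{j-1}(\lambda)S_j(\lambda)$ need not vanish — but the paper's own proof has the same harmless wrinkle, and it affects none of the applications.
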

\begin{proof}
Note that operators $D^{\nu}$ and $\partial_{\lambda}^{\vartheta}$ commute. So, let us first examine $D^{\nu} S_{j}(\lambda)$. It follows from Lemma \ref{first-layer-lem} that 
$$\left(z'-y'\right)^{\nu} \sum_{\mu} \psi_j((2|\mu|+n_1)|\lambda|) \Phi^{\lambda}_{\mu}(z') \Phi^{\lambda}_{\mu} (y')$$ 
is finite linear combination of 
\begin{align} \label{eq:finite-shifts-1}
\int_{[0,1]^{|\nu|}} \sum_{\mu} C_{\mu, \vec{c}} \left(\tau^{ \frac{1}{2} \gamma_2} \partial_{\tau}^{\gamma_1} \psi_j\right) \left( \left(2\mu  + \tilde{1} + \vec{c}(s) \right) |\lambda| \right) \Phi_{\mu + \tilde{\mu}}^{\lambda} (z^{\prime}) \Phi_{\mu}^{\lambda}(y^{\prime}) \, ds, 
\end{align} 
where $|\gamma_2| \leq |\gamma_1| \leq |\nu|$, $|\gamma_1| - \frac{1}{2} |\gamma_2| = \frac{|\nu|}{2}$, $|\tilde{\mu}| \leq |\nu|$, and $C_{\mu, \vec{c}}$ is bounded in $\mu$ and $\vec{c}$. 

Now, recall that 
\begin{align*} 
\frac{\partial}{\partial |\lambda|} \Phi_{\mu}^{\lambda} & = \frac{1}{4|\lambda|} \sum_{j=1}^{n_1} \left\{-\sqrt{(\mu_j + 1)(\mu_j + 2)} \Phi_{\mu + 2e_j}^{\lambda} +  \sqrt{\mu_j(\mu_j - 1)} \Phi_{\mu - 2e_j}^{\lambda}\right\},
\end{align*}
and therefore for a nice function $m(\tau)$ we have 
\begin{align} \label{lambda-der}
& \frac{\partial}{\partial |\lambda|} \left\{ \sum_{\mu} m \left( \left(2\mu  + \tilde{1} + \vec{c}(s) \right) |\lambda| \right) \Phi_{\mu + \tilde{\mu}}^{\lambda} (z^{\prime}) \Phi_{\mu}^{\lambda}(y^{\prime}) \right\} \\ 
\nonumber & \quad = \sum_{\mu} \sum_{j=1}^{n_1} (2 \mu_j + 1 + c_j(s) ) \left(\partial_{\tau_j} m\right) \left( \left(2\mu  + \tilde{1} + \vec{c}(s) \right) |\lambda| \right) \Phi_{\mu + \tilde{\mu}}^{\lambda} (z^{\prime}) \Phi_{\mu}^{\lambda}(y^{\prime}) \\
\nonumber & \quad \quad - \frac{1}{4|\lambda|} \sum_{\mu} \sum_{j=1}^{n_1} \sqrt{(\mu_j + \tilde{\mu}_j + 1)(\mu_j + \tilde{\mu}_j + 2)} \, m \left( \left(2\mu  + \tilde{1} + \vec{c}(s) \right) |\lambda| \right) \Phi_{\mu + \tilde{\mu} + 2 e_j}^{\lambda} (z^{\prime}) \Phi_{\mu}^{\lambda}(y^{\prime}) \\ 
\nonumber & \quad \quad +  \frac{1}{4|\lambda|}  \sum_{\mu} \sum_{j=1}^{n_1} \sqrt{(\mu_j + \tilde{\mu}_j) (\mu_j + \tilde{\mu}_j - 1)} \, m \left( \left(2\mu  + \tilde{1} + \vec{c}(s) \right) |\lambda| \right) \Phi_{\mu + \tilde{\mu} - 2 e_j}^{\lambda} (z^{\prime}) \Phi_{\mu}^{\lambda}(y^{\prime}) \\
\nonumber & \quad \quad -  \frac{1}{4|\lambda|} \sum_{\mu} \sum_{j=1}^{n_1} \sqrt{(\mu_j + 1)(\mu_j + 2)} \, m \left( \left(2\mu  + \tilde{1} + \vec{c}(s) \right) |\lambda| \right) \Phi_{\mu + \tilde{\mu}}^{\lambda} (z^{\prime}) \Phi_{\mu + 2 e_j}^{\lambda}(y^{\prime}) \\
\nonumber & \quad \quad +  \frac{1}{4|\lambda|} \sum_{\mu} \sum_{j=1}^{n_1} \sqrt{\mu_j(\mu_j - 1)} \, m \left( \left(2\mu  + \tilde{1} + \vec{c}(s) \right) |\lambda| \right) \Phi_{\mu + \tilde{\mu}}^{\lambda} (z^{\prime}) \Phi_{\mu - 2 e_j}^{\lambda}(y^{\prime}).
\end{align}
Notice from \eqref{lambda-der} that each application of $\partial_{\lambda}$ introduces a shift of at the most two places in $\mu$-index of $\Phi_{\mu}^{\lambda}$. So, if we apply $\partial_{\lambda}^{\vartheta}$ then there would be at the most $2 |\vartheta|$ number of shifts in $\mu$-index. Applying this on $C_{\mu, \vec{c}} \left(\tau^{ \frac{1}{2} \gamma_2} \partial_{\tau}^{\gamma_1} \psi_j\right) \left( \left(2\mu  + \tilde{1} + \vec{c}(s) \right) |\lambda| \right)$ of \eqref{eq:finite-shifts-1}, we see that the kernel of $D^{\nu} \partial_{\lambda}^{\vartheta} S_{j}(\lambda)$ is a finite linear combination of terms of the form 
\begin{align} \label{eq:finite-shifts-2}
\int_{[0,1]^{|\nu|}} \sum_{\mu} \tilde{C}_{\mu, \vec{c}} \left(\tau^{ \frac{1}{2} \theta_2} \partial_{\tau}^{\theta_1} \psi_j \right) \left( \left(2 \mu  + \tilde{1} + \vec{c}(s) \right) |\lambda| \right) \Phi_{\mu + \tilde{\mu}}^{\lambda} (z^{\prime}) \Phi_{\mu + \tilde{\tilde{\mu}}}^{\lambda}(y^{\prime}) \, ds, 
\end{align} 
with $|\tilde{\mu}|, |\tilde{\tilde{\mu}}| \leq |\nu| + 2 |\vartheta|$. 

Since $\chi_{j'}$ is supported in $[2^{j'}, 2^{j'+1})$, the conclusion of the lemma follows from \eqref{eq:finite-shifts-2}. 
\end{proof}

We now prove several kernel estimates explaining how conditions of the type \eqref{def:grushin-kernel-Maucheri-hormander-cond1-L0-N0} and \eqref{def:grushin-kernel-Maucheri-hormander-cond2-L0-N0} imply conditions \eqref{cond:General-hypo} to \eqref{cond:General-hypo-grad-sup} in different ranges. 

\subsection{\texorpdfstring{$L^2$}{}-weighted Plancherel (gradient) estimates} \label{subsec:mauceri-type-L2-weighted-kernel-estimates} Here, we shall discuss $L^2$-weighted Plancherel estimates arising out of conditions of the type \eqref{def:grushin-kernel-Maucheri-hormander-cond1-L0-N0} and \eqref{def:grushin-kernel-Maucheri-hormander-cond2-L0-N0}. 

\begin{lemma} \label{lem:General-Kernel-estimate} 
Suppose $\Theta$ satisfies the conditions \eqref{def:grushin-kernel-Maucheri-hormander-cond1-L0-N0} and \eqref{def:grushin-kernel-Maucheri-hormander-cond2-L0-N0} for some $L_0 \in \mathbb{N}$ and $N_0 = 0$. Then for all $L \leq \frac{L_0}{2}$, we have 
\begin{align} \label{General-square-estimate}
\sup_{x \in \mathbb{R}^{n_1 + n_2}} |B(x, 2^{-j/2})| \int_{\mathbb{R}^{n_1 + n_2}} d(x,y)^{4L} |\Theta_{j}(x,y)|^2 \, dy \lesssim_{L_0} 2^{-2j L}.
\end{align}
\end{lemma}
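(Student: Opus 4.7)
The plan is to pass from the operator-valued Plancherel-type hypotheses \eqref{def:grushin-kernel-Maucheri-hormander-cond1-L0-N0} and \eqref{def:grushin-kernel-Maucheri-hormander-cond2-L0-N0} on the symbol $\Theta(x'',\lambda)$ to the desired weighted kernel bound by combining the Fourier representation in $x''$, Plancherel plus integration by parts in $y''$, commutator identities in $x'$, and Leibniz rules. Writing $\psi_j(G)$ via its fibers $\psi_j(H(\lambda))$, the kernel of $\Theta_j = \Theta\circ \psi_j(G)$ admits the representation
\begin{equation*}
\Theta_j(x,y) = (2\pi)^{-n_2}\int_{\mathbb{R}^{n_2}} e^{-i\lambda\cdot(x''-y'')} K^{x''}_{\lambda,j}(x',y')\,d\lambda,
\end{equation*}
where $K^{x''}_{\lambda,j}$ denotes the integral kernel of $\Theta(x'',\lambda)\psi_j(H(\lambda))$ on $L^2(\mathbb{R}^{n_1})$. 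I would split the $y$-integral according to $y'\in\mathfrak{E}_j(x')$ versus its complement, and dominate $d(x,y)^{4L}$ using \eqref{def:grushin-metric-asymp}: on $\mathfrak{E}_j(x')^c$ one has $d(x,y)^{4L}\lesssim |x'-y'|^{4L}+|x''-y''|^{2L}$ together with the trivial bound $|x'-y'|^{4L}\leq (|x'|+|y'|)^{4L}\leq 2^{-2jL}$; on $\mathfrak{E}_j(x')$ both regimes of $d$ can be uniformly absorbed into $d(x,y)^{4L}\lesssim |x'-y'|^{4L}+(|x''-y''|/(|x'|+|y'|))^{4L}$.

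Plancherel in the $y''$-variable with integration by parts converts each $(x''-y'')^{\vartheta_1}$ into $\partial_\lambda^{\vartheta_1}$ acting on $K^{x''}_{\lambda,j}$, while the commutator identity $[x'_k,\,\cdot\,] = |\lambda|^{-1/2}(\bar{\delta}_k(\lambda)-\delta_k(\lambda))/2$ (which follows from $A_k(\lambda)+A_k(\lambda)^*=2|\lambda|x'_k$) rewrites $(x'-y')^\alpha K^{x''}_{\lambda,j}(x',y')$ as a bounded linear combination of $|\lambda|^{-|\alpha|/2}\delta^\nu(\lambda)\bar{\delta}^\gamma(\lambda) K^{x''}_{\lambda,j}(x',y')$ with $|\nu+\gamma|=|\alpha|$. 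Together, these manipulations reduce the squared integrand in the $\mathfrak{E}_j(x')$-region to sums of squared kernels of $\mathcal{L}(\lambda)^{\nu,\gamma}_{\vartheta_1,\vartheta_2}[\Theta(x'',\lambda)\psi_j(H(\lambda))]$ with $|\nu+\gamma|+|\vartheta_1+\vartheta_2|\leq 2L$, weighted by $(|x'|+|y'|)^{-2|\vartheta_1+\vartheta_2|}$; in the $\mathfrak{E}_j(x')^c$-region only $\partial_\lambda^{\vartheta_1}$ survives and the index budget becomes $|\nu+\gamma|+2|\vartheta_1+\vartheta_2|\leq 2L$. Since $L\leq L_0/2$, both budgets remain within the range afforded by \eqref{def:grushin-kernel-Maucheri-hormander-cond1-L0-N0} and \eqref{def:grushin-kernel-Maucheri-hormander-cond2-L0-N0} with $\alpha_0=\beta_0=0$, and the matching right-hand sides deliver the target $2^{-2jL}$.

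The main obstacle is the transition from $\mathcal{L}(\lambda)^{\nu,\gamma}_{\vartheta_1,\vartheta_2}[\Theta(x'',\lambda)\psi_j(H(\lambda))]$ (which arises naturally from the calculus above) to $\mathcal{L}(\lambda)^{\nu,\gamma}_{\vartheta_1,\vartheta_2}\Theta(x'',\lambda)\chi_j(\lambda)$ (the form demanded by the hypotheses). For this, I would apply Leibniz for both the ordinary derivative $\partial_\lambda$ and the non-commutative derivatives $\delta_k(\lambda), \bar{\delta}_k(\lambda)$ to distribute the action between $\Theta(x'',\lambda)$ and $\psi_j(H(\lambda))$, producing terms of the form $\mathcal{L}(\lambda)^{\nu_1,\gamma_1}_{\vartheta_{11},\vartheta_2}\Theta(x'',\lambda)$ composed with $\mathcal{L}(\lambda)^{\nu_2,\gamma_2}_{\vartheta_{12},0}\psi_j(H(\lambda))$. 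Using Lemma \ref{first-layer-lem} in tandem with the explicit differentiation formula for $\Phi^\lambda_\mu$ in $\lambda$, the right-hand factor can be expanded as a finite linear combination of shifted Hermite spectral operators, each supported within $|j'-j|\leq C(|\nu_2+\gamma_2|+2|\vartheta_{12}|)$ (compare Lemma \ref{lem:finite-shifts}) and bounded in operator norm by $O(1)$ on those scales. The elementary inequality $\int|(AB)(x',y')|^2\,dy' \leq \|B\|_{op}^2\int|A(x',z')|^2\,dz'$ then transfers the $L^2_{y'}$-integral back onto the $\Theta$-factor composed with $\chi_{j'}(\lambda)$, at which point \eqref{def:grushin-kernel-Maucheri-hormander-cond1-L0-N0} or \eqref{def:grushin-kernel-Maucheri-hormander-cond2-L0-N0} applies directly; summing the $O(1)$ many $j'$-contributions and absorbing the harmless discrepancy between $|B(x,2^{-j/2})|$ and $|B(x,2^{-j'/2})|$ completes the estimate.
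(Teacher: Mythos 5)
Your outline follows the paper's skeleton (split the $y$-integral by $\mathfrak{E}_j(x')$, Plancherel in $y''$ with integration by parts in $\lambda$, commutator identities for $(x'-y')^\alpha$, Leibniz between $\Theta(x'',\lambda)$ and $\psi_j(H(\lambda))$, insertion of $\chi_{j'}(\lambda)$ via the finite-shift Lemma \ref{lem:finite-shifts}), but it skips the two steps that carry the actual difficulty. The main gap is the transfer of the weight $(|x'|+|y'|)^{-2|\vartheta_1+\vartheta_2|}$ from the outer variable $y'$ to the intermediate variable $z'$ of the composition. After you factor the kernel as $\int A(x',z')B(z',y')\,dz'$ with $A$ the $\Theta\chi_{j'}$-piece and $B$ the $\psi_j$-piece, your inequality $\int|(AB)(x',y')|^2dy'\le\|B\|_{op}^2\int|A(x',z')|^2dz'$ only applies to the unweighted integral, whereas \eqref{def:grushin-kernel-Maucheri-hormander-cond1-L0-N0} controls the $(x',z')$-kernel with the weight $(|x'|+|z'|)^{-|\vartheta_1+\vartheta_2|}$; you cannot discard the $y'$-weight (its crude bound $2^{jL_2}$ on $\mathfrak{E}_j(x')$ loses exactly the decay you need) nor bound the unweighted $\int_{z'}$-integral of $A$ by \eqref{def:grushin-kernel-Maucheri-hormander-cond1-L0-N0}, since $(|x'|+|z'|)^{|\vartheta_1+\vartheta_2|}$ is unbounded on $\mathfrak{E}_j(x')$. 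The paper resolves this by splitting the $z'$-integration into $\mathfrak{E}_j(x')\cap\mathcal{D}_k^{\pm}$ and $\mathfrak{E}_j(x')^c$ (see \eqref{def:x-prime-sets-Mauceri-calc-1} and \eqref{kernel-decom}), attaching the ratio $(|x'|\pm z_k')^{2L_2}/(|x'|+|y'|)^{2L_2}$ to the $\psi_j$-side operator, and estimating that weighted operator norm through the explicit Hermite expansion of Lemma \ref{weighted-kernel-estimate-3} (estimates \eqref{Op-norm-1}, \eqref{Op-norm-2}); crucially this yields $2^{-j|\nu_2|/2}$, not merely $O(1)$, which is what closes the budget $2^{-j(2L_1+2L_2)}$, and for the piece with $z'\in\mathfrak{E}_j(x')^c$ it is condition \eqref{def:grushin-kernel-Maucheri-hormander-cond2-L0-N0} — used inside the region you reserved for \eqref{def:grushin-kernel-Maucheri-hormander-cond1-L0-N0} — whose extra decay compensates the factor $2^{jL_2}$. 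Your "applies directly" and "$O(1)$ on those scales" replace all of this; as written the argument neither produces the weight in the correct variable nor the decay $2^{-j|\nu_2|/2}$ from the $\psi_j$-factor, so the claimed bound $2^{-2jL}$ is not reached whenever commutator derivatives land on the $\psi_j$-piece.

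A second, smaller gap: you integrate by parts in $\lambda$ on $\Theta_j(x,y)$ directly. The hypotheses only control quantities with singular normalizations $|\lambda|^{-|\vartheta_2|}$, and intermediate factors such as $\partial_\lambda^{\vartheta}\psi_j(H(\lambda))$ without compensating powers of $|\lambda|$ are not uniformly bounded near $\lambda=0$, so the boundary terms and the validity of the by-parts step need justification. The paper handles this by first proving the estimate for the truncated family $\Theta_j^q$ with the cut-offs $\Upsilon^q(\lambda)$ of \eqref{def:spectral-break-kappa-variable} (Lemma \ref{lem:General-Kernel-estimate-lambda-singularity}), uniformly in $q$, and then passing to $\Theta_j$ by dominated convergence and Fatou; some regularization of this kind should appear in your argument as well.
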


In order to prove Lemma \ref{lem:General-Kernel-estimate}, we  introduce another approximate identity, this time in $\lambda$-variable, to handle (possible) singularities as $\lambda \to 0$. This is important as we would see that our proof requires an integration by parts in $\lambda$-variable, and such an approximate identity helps in handling the boundary terms. For each $q \in \mathbb{N}$, let us define 
\begin{align} \label{def:Theta-family-lambda-cut}
\Theta_{j}^{q}(x'', \lambda) = \Theta_{j}(x'', \lambda) \Upsilon^{q}(\lambda), 
\end{align} 
where $\Upsilon^{q}(\lambda)$ are defined in \eqref{def:spectral-break-kappa-variable}. 

Let  $\Theta_{j}^q$ denote the operator on $L^2 (\mathbb{R}^{n_1 + n_2})$ corresponding to the family  $\left\{ \Theta_{j}^q(x'', \lambda) \right\}$.

\begin{lemma} \label{lem:General-Kernel-estimate-lambda-singularity}
Suppose $\Theta$ satisfies the conditions \eqref{def:grushin-kernel-Maucheri-hormander-cond1-L0-N0} and  \eqref{def:grushin-kernel-Maucheri-hormander-cond2-L0-N0} for some $L_0 \in \mathbb{N}$ and $N_0 = 0$. Then for all $L \leq \frac{L_0}{2}$ and $q \in \mathbb{N}$, we have 
\begin{align} \label{General-square-estimate-singularity-with-q}
\sup_{x\in \mathbb{R}^{n_1 + n_2}} |B(x, 2^{-j/2})| \int_{\mathbb{R}^{n_1 + n_2}} d(x,y)^{4L} |\Theta_{j}^{q}(x,y)|^2 \, dy \lesssim_{L_0}  2^{-2j L}.
\end{align}
\end{lemma}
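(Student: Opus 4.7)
The plan is to argue in direct analogy with the proof of Lemma~\ref{lem:General-Kernel-estimate}, using $\Upsilon^q(\lambda)$ as a purely technical $\lambda$-mollifier which makes $K_j^q$ compactly supported in $\lambda$ and thereby legitimizes all the integrations by parts in $\lambda$ performed below; the main task is to verify that every constant appearing is independent of $q$. First I would write
\begin{align*}
\Theta_j^q(x,y) = c_{n_2}\int_{\mathbb{R}^{n_2}} e^{-i\lambda\cdot(x''-y'')}\, K_j^q(x'',\lambda;x',y')\,d\lambda,
\end{align*}
with $K_j^q(x'',\lambda;x',y') = [\Theta(x'',\lambda)\,\psi_j(H(\lambda))\,\Upsilon^q(\lambda)](x',y')$. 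The cutoff $\Upsilon^q(\lambda)$ forces $|\lambda|\gtrsim 2^{-q}$ and $\psi_j(H(\lambda))$ forces $(2k+n_1)|\lambda|\sim 2^j$, so $K_j^q$ is compactly supported in $\lambda$ for every fixed $q$ and no boundary terms arise in $\lambda$-integrations by parts. Using the two-regime expression \eqref{def:grushin-metric-asymp}, I would dominate $d(x,y)^{4L}$ by a finite linear combination of terms of the form $|x'-y'|^{4L_1}\,W(x,y)^{2L_2}$, with $W(x,y) = |x''-y''|/(|x'|+|y'|)$ used on $\mathfrak{E}_j(x')$ and $W(x,y) = |x''-y''|^{1/2}$ used on $\mathfrak{E}_j(x')^c$, and split the $y'$-integral along this dichotomy.

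For each such piece I would transfer the $(x''-y'')^{\beta}$ factor (with $|\beta|=2L_2$ on $\mathfrak{E}_j(x')$ and $|\beta|=L_2$ on $\mathfrak{E}_j(x')^c$) onto $\partial_\lambda^{\beta} K_j^q$ via integration by parts, and then apply Plancherel in $y''$ to reduce the $y''$-integration to $\int_{\mathbb{R}^{n_2}}|\partial_\lambda^{\beta} K_j^q(x'',\lambda;x',y')|^2\,d\lambda$. The $|x'-y'|^{4L_1}$ weight is absorbed into the kernel by iterated application of Lemma~\ref{first-layer-lem} to $\psi_j$, which rewrites the monomials $(x'-y')^{\nu_1}$ with $|\nu_1|=4L_1$ as a finite linear combination of kernels of $\delta^{\nu}(\lambda)\bar\delta^{\gamma}(\lambda)$-type commutators (the attendant factor $|\lambda|^{-|\nu+\gamma|/2}$ being exactly absorbed by the normalization inside $\mathcal{L}$) paired with scaled Hermite projections shifted in the index; by Lemma~\ref{lem:finite-shifts} only finitely many $\chi_{j'}(\lambda)$ with $|j'-j|\lesssim L$ contribute. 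The Leibniz rule then distributes $\partial_\lambda^{\beta}$ across $\Theta(x'',\lambda)$, $\psi_j(H(\lambda))$ and $\Upsilon^q(\lambda)$. The key observation for $q$-uniformity is that $|\partial_\lambda^{\vartheta}\Upsilon^q(\lambda)|\lesssim |\lambda|^{-|\vartheta|}$ uniformly in $q$ (the estimate is saturated on the transition scale $|\lambda|\sim 2^{-q}$), and these negative powers of $|\lambda|$ are exactly absorbed by the $|\lambda|^{-|\vartheta_2|}$ normalisation built into $\mathcal{L}(\lambda)_{\vartheta_1,\vartheta_2}^{\nu,\gamma}$ via \eqref{def:derivative-theta-family}. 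After all these manipulations, the contribution from each piece reduces to a finite linear combination of quantities of the form
\begin{align*}
|B(x,2^{-j/2})|\int_{E} W(x,y)^{2L_2}\int_{\mathbb{R}^{n_2}}\bigl|\bigl(\mathcal{L}(\lambda)_{\vartheta_1,\vartheta_2}^{\nu,\gamma}\Theta(x'',\lambda)\,\chi_{j'}(\lambda)\bigr)(x',y')\bigr|^2\,d\lambda\,dy',
\end{align*}
with $E\in\{\mathfrak{E}_j(x'),\mathfrak{E}_j(x')^c\}$, $|\alpha_0|=|\beta_0|=0$, and indices satisfying $|\nu+\gamma|+2|\vartheta_1+\vartheta_2|\leq 2L\leq L_0$.

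Applying \eqref{def:grushin-kernel-Maucheri-hormander-cond1-L0-N0} on $\mathfrak{E}_j(x')$ with $|\vartheta_1+\vartheta_2|$ tuned so that the $(|x'|+|y'|)^{-2|\vartheta_1+\vartheta_2|}$ denominator matches the $(|x'|+|y'|)^{-2L_2}$ factor from $W^{2L_2}$, and \eqref{def:grushin-kernel-Maucheri-hormander-cond2-L0-N0} on $\mathfrak{E}_j(x')^c$ where the $W^{2L_2}=|x''-y''|^{L_2}$ weight is balanced by the sharper $2^{-j\cdot 2|\vartheta_1+\vartheta_2|}$ decay, both contributions yield the desired bound $2^{-2jL}$ uniformly in $q$. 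I expect the main obstacle to lie in the combinatorial bookkeeping of the preceding paragraph: the Leibniz distribution of $\partial_\lambda^{\beta}$ combined with the commutator expansion from Lemma~\ref{first-layer-lem} generates many families of terms (reflecting in particular the several ways $\partial_\lambda$ acts on $\Phi^\lambda_\mu$, cf.~\eqref{lambda-der}), and one must verify that each resulting term matches a form covered by \eqref{def:grushin-kernel-Maucheri-hormander-cond1-L0-N0} or \eqref{def:grushin-kernel-Maucheri-hormander-cond2-L0-N0} with total derivative order bounded by $L_0$; a subtler point is ensuring that the negative $|\lambda|$-powers produced by $\partial_\lambda^{\vartheta}\Upsilon^q$ are always absorbed within the $|\lambda|^{-|\vartheta_2|}$ budget in $\mathcal{L}$, which is what truly delivers the $q$-uniformity.
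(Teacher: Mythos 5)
Your opening moves coincide with the paper's: the split of $d(x,y)^{4L}$ according to the two regimes of \eqref{def:grushin-metric-asymp}, the integration by parts in $\lambda$ (legitimate because $\Upsilon^q$ and $\psi_j$ make the $\lambda$-support compact), the Plancherel theorem in the $y''/\lambda$ duality, the uniform bound $\sup_q|\lambda|^{|\vartheta|}|\partial^{\vartheta}_{\lambda}\Upsilon^q(\lambda)|<\infty$ absorbed by the $|\lambda|^{-|\vartheta_2|}$ normalisation, the Leibniz split of $D^{\alpha}\partial_{\lambda}^{\vartheta}$ between $\Theta(x'',\lambda)$ and $S_j(\lambda)$, and the restriction to $|j'-j|\lesssim L$ via Lemma \ref{lem:finite-shifts} are all exactly what the paper does. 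The gap lies in your claim that ``after all these manipulations, the contribution from each piece reduces to a finite linear combination of'' the quantities appearing in \eqref{def:grushin-kernel-Maucheri-hormander-cond1-L0-N0} and \eqref{def:grushin-kernel-Maucheri-hormander-cond2-L0-N0} with the region $E$ and weight taken in the outer variable $y'$. The hypotheses control the kernel of $\bigl(\mathcal{L}(\lambda)^{\nu,\gamma}_{\vartheta_1,\vartheta_2}\Theta(x'',\lambda)\bigr)\chi_{j'}(\lambda)$ in the variables $(x',z')$, where $z'$ is the \emph{intermediate} variable of the composition $\Theta(x'',\lambda)\chi_{j'}(\lambda)\,S_j(\lambda)$, while the quantity you must bound carries the weight $(|x'|+|y'|)^{-2L_2}$ and the indicator of $\mathfrak{E}_j(x')$ in the \emph{outer} variable $y'$; these cannot simply be identified, and the factor $D^{\nu_2}\partial_{\lambda}^{\vartheta_4}S_j(\lambda)$ does not disappear for free.

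Bridging this mismatch is the bulk of the paper's proof and is absent from your outline: one splits the $z'$-integral into $\mathfrak{E}_j(x')\cap\mathcal{D}_k^{\pm}$ and $\mathfrak{E}_j(x')^c$ (the cones \eqref{def:x-prime-sets-Mauceri-calc-1} are needed so that $|z'|\sim\pm z'_k$ and the inserted weight $(|x'|+z'_k)^{\pm 2L_2}$ is a genuine polynomial factor), writes each piece as $\left( A\right)^{*}F$ as in \eqref{kernel-decom}, bounds $\|F\|_2$ by the hypothesis \eqref{def:grushin-kernel-Maucheri-hormander-cond1-L0-N0} (respectively \eqref{def:grushin-kernel-Maucheri-hormander-cond2-L0-N0}), and bounds $\|A\|_{op}$ by transferring the ratio $(|x'|+z'_k)^{2L_2}/(|x'|+|y'|)^{2L_2}$ onto the $S_j$-factor, writing $z'_k=(z'_k-y'_k)+y'_k$ and invoking the explicit scaled-Hermite expansion of Lemma \ref{weighted-kernel-estimate-3} together with the $\tau^{\alpha_1/2}$-type bookkeeping, as in \eqref{Reduced-kernel-expression-T-1} and \eqref{Op-norm-1}. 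In particular, even for $y'\in\mathfrak{E}_j(x')$ there is a mixed piece where $z'\in\mathfrak{E}_j(x')^c$: there the crude bound $(|x'|+|y'|)^{-2L_2}\leq 2^{jL_2}$ costs a factor $2^{2jL_2}$ in the operator norm, and it is precisely the stronger decay $2^{-j(|\nu|+\,2\cdot 2L_2)}$ of \eqref{def:grushin-kernel-Maucheri-hormander-cond2-L0-N0} (not of \eqref{def:grushin-kernel-Maucheri-hormander-cond1-L0-N0}) that restores the bound $2^{-2jL}$, cf. \eqref{est:mauceri-type-term1-calc-op-norm-easy-part-2}--\eqref{Op-norm-2}. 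Your proposal, which pairs each region of the $y'$-integral with one hypothesis only and never confronts the intermediate-variable issue, misses this mechanism, so as written the central reduction step does not go through.
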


Note that estimate \eqref{General-square-estimate-singularity-with-q} is uniform in $q \in \mathbb{N}$. Assuming Lemma \ref{lem:General-Kernel-estimate-lambda-singularity} for now, let us see how to deduce Lemma \ref{lem:General-Kernel-estimate} from it. 

\begin{proof}[Proof of Lemma \ref{lem:General-Kernel-estimate}]
Note that we have 
$$ \int_{\mathbb{R}^{n_2}} \left| \Theta_{j}(x'', \lambda)(x^{\prime}, y^{\prime}) \right| d\lambda < \infty, $$
which holds true in view of conditions \eqref{def:grushin-kernel-Maucheri-hormander-cond1-L0-N0} and  \eqref{def:grushin-kernel-Maucheri-hormander-cond2-L0-N0}, and the support condition on $\lambda$ arising from the operator $S_j = \psi_{j} (G)$. In fact, we have here that $|\lambda| \leq 2^{j+1}$ (see \eqref{def:spectral-break}). 

Therefore, it follows from the dominated convergence theorem that 
\begin{align} \label{eq:General-square-estimate-step1}
\lim_{q \to \infty} \Theta_{j}^{q} (x,y) & = \lim_{q \to \infty}  \int_{\mathbb{R}^{n_2}} e^{-i \lambda \cdot (x^{\prime\prime} - y^{\prime\prime})} \Upsilon^{q}(\lambda) \Theta_{j}(x'', \lambda)(x^{\prime},y^{\prime}) \, d\lambda  \\ 
\nonumber & = \int_{\mathbb{R}^{n_2}} e^{-i \lambda \cdot (x^{\prime\prime} - y^{\prime\prime})}  \Theta_{j}(x'', \lambda)(x^{\prime},y^{\prime}) \, d\lambda \\ 
\nonumber & = \Theta_{j} (x,y). 
\end{align} 

Finally, we can make use of Fatou's Lemma to conclude that 
 \begin{align*}
& |B(x, 2^{-j/2})| \int_{\mathbb{R}^{n_1 + n_2}} d(x,y)^{4L} |\Theta_{j}(x,y)|^2 \, dy\\
& \leq \liminf_{q\rightarrow \infty} |B(x, 2^{-j/2})| \int_{\mathbb{R}^{n_1 + n_2}} d(x,y)^{4L} |\Theta^{q}_{j}(x,y)|^2 \, dy \lesssim 2^{-2j L}, 
\end{align*}
where the last inequality follows from Lemma \ref{lem:General-Kernel-estimate-lambda-singularity}. 

This completes the proof of Lemma \ref{lem:General-Kernel-estimate}. 
\end{proof}

\begin{proof}[Proof of Lemma \ref{lem:General-Kernel-estimate-lambda-singularity}]
In view of the definition of the metric $d(x,y)$, it suffices to decompose the integral in the claimed estimate \eqref{General-square-estimate-singularity-with-q} into two regions, namely, 
\begin{align} \label{eq:first-break-lem:General-Kernel-estimate-lambda-singularity}
I & = \left| B(x,2^{-j/2}) \right| \int_{\mathbb{R}^{n_2}} \int_{\mathfrak{E}_{j} (x^\prime)} |x'-y'|^{4L_1}\frac{|x''-y''|^{4 L_2}}{(|x'|+|y'|)^{4 L_2}} |\Theta_{j}^{q}(x,y)|^2 \, dy^{\prime} \, dy^{\prime \prime}, \\ 
\nonumber \textup{and} \quad II & = \left| B(x,2^{-j/2}) \right| \int_{\mathbb{R}^{n_2}} \int_{\mathfrak{E}_{j} (x^{\prime})^c} |x'-y'|^{4L_1} |x''-y''|^{2 L_2} |\Theta_{j}^{q}(x,y)|^2 \, dy^{\prime} \, dy^{\prime \prime}, 
\end{align}
with arbitrary $L_1$ and $L_2$ such that $L_1 + L_2=L$, and $\mathfrak{E}_{j} (x^{\prime})$ are sets as defined in \eqref{def:set-Cj(x-prime)}. In the following estimation, we write ${\mathbbm{1}}_{j, x'}$ for the characteristic function of the set $\mathfrak{E}_{j} (x')$. 

Let us analyse $|x'-y'|^{2L_1}|x''-y''|^{2 L_2}\Theta_{j}^{q}(x,y)$. Making use of the support condition on $\lambda$-variable (which is a consequence of support conditions of $\Upsilon^{q}(\lambda)$ and $S_j(\lambda)$), we can perform integration by parts in $\lambda$-variable to conclude that $|x'-y'|^{2L_1}|x''-y''|^{2 L_2} \Theta_{j}^{q}(x,y)$ can be written as a finite linear combination of terms of the following form 
\begin{align*}
\int_{\mathbb{R}^{n_2}} e^{-i\lambda\cdot (x^{\prime\prime}-y^{\prime\prime})} ~
\partial^{\vartheta _1}_{\lambda} \Upsilon^{q}(\lambda) \, D^{\alpha} \partial^{\vartheta_2}_{\lambda}\Theta_{j}(x'', \lambda)(x^\prime,y^\prime)~ d\lambda
\end{align*}
where $|\alpha|=2L_1$ and $|\vartheta_1 + \vartheta_2| = 2L_2.$ 
  
In view of the Euclidean Plancherel theorem for $\lambda$-variable, we get that the $L^2$ norm of the above kernel in $y$-variable is dominated by 
\begin{align} \label{est:Mauceri-type-lambda-singularity}
&\int_{\mathbb{R}^{n_1}}\int_{\mathbb{R}^{n_2}} |\partial^{\vartheta_1}_{\lambda} \Upsilon^{q}(\lambda)|^{2} \left| D^{\alpha} \partial^{\vartheta_2}_{\lambda} \Theta_{j}  (x'', \lambda) (x^\prime,y^\prime)\right|^{2} d\lambda \, dy^{\prime}\\
\nonumber & \lesssim \int_{\mathbb{R}^{n_1}}\int_{\mathbb{R}^{n_2}} \frac{\left| D^{\alpha} \partial^{\vartheta_2}_{\lambda}\Theta_{j}(x'', \lambda) (x^\prime,y^\prime)\right|^{2}}{|\lambda|^{2|\vartheta_1|}} d\lambda \, dy^{\prime},
\end{align}
where the last inequality follows from the fact that  $\displaystyle \sup_{\lambda, q} |\lambda|^{|\vartheta_1|}|\partial^{\vartheta_1}_{\lambda} \Upsilon^{q}(\lambda)| \leq C_{\vartheta_1} < \infty$. 

For a fixed $\lambda \neq 0$, by Leibniz formula, the operator $|\lambda|^{-|\vartheta_1|}  D^{\alpha} \partial_{\lambda}^{\vartheta_2} \Theta_{j}(x'', \lambda)$ can be expressed as a finite sum of operators of the forms 
$$ \left(|\lambda|^{-|\vartheta_1|} D^{\nu_1} \partial_{\lambda}^{\vartheta_3} \Theta(x'', \lambda) \right) \left( D^{\nu_2} \partial_{\lambda}^{\vartheta_4}S_{j}(\lambda) \right), $$
where $\nu_1 + \nu_2 = \alpha$ and $\vartheta_3 + \vartheta_4 = \vartheta_2$. 

We take one such term and decompose it as follows: 
\begin{align} \label{est:Mauceri-type-lambda-singularity-decompose}
& \left(|\lambda|^{-|\vartheta_1|} D^{\nu_1} \partial_{\lambda}^{\vartheta_3} \Theta(x'', \lambda) \right) \left( D^{\nu_2} \partial_{\lambda}^{\vartheta_4}S_{j}(\lambda) \right) \\ 
\nonumber & \quad \quad = \sum_{j'=0}^{\infty} \left(|\lambda|^{-|\vartheta_1|} D^{\nu_1} \partial_{\lambda}^{\vartheta_3} \Theta(x'', \lambda) \right) \chi_{j'}(\lambda)\left( D^{\nu_2} \partial_{\lambda}^{\vartheta_4}S_{j}(\lambda) \right). 
\end{align}
But, Lemma \ref{lem:finite-shifts} implies that the infinite sum on the right hand side of \eqref{est:Mauceri-type-lambda-singularity-decompose} will survive only when $|j-j'| \leq C L$ where the constant $C > 0$ depends only on $n_1$ and $n_2$. 

Let us also consider the following open subsets of $\mathbb{R}^{n_1}$, for $1 \leq k \leq n_1$: 
\begin{align} \label{def:x-prime-sets-Mauceri-calc-1}
\mathcal{D}_k^+ & = \left\{ z^{\prime} \in \mathbb{R}^{n_1} : z_k^{\prime} > 0 \, \, \textup{and} \, z_k^{\prime} > |z_i^{\prime}| \, \, \textup{for all} \, i \neq k \right \}, \\ 
\nonumber \textup{and} \quad \mathcal{D}_k^- & = \left\{ z^{\prime} \in \mathbb{R}^{n_1} : z_k^{\prime} < 0 \, \, \textup{and} \, - z_k^{\prime} > |z_i^{\prime}| \, \, \textup{for all} \, i \neq k \right\}, 
\end{align}
Clearly, all the sets $\mathcal{D}_k^+$ and $\mathcal{D}_j^-$ are pairwise disjoint and the set $\mathbb{R}^{n_1} \setminus \cup_{k =1}^{n_1} \left( \mathcal{D}_k^+ \cup \mathcal{D}_k^- \right)$ is of Lebesgue measure zero. 

Let us now estimate terms $I$ and $II$ of \eqref{eq:first-break-lem:General-Kernel-estimate-lambda-singularity}. 

\medskip \noindent \textbf{\underline{Estimation of term $I$ of \eqref{eq:first-break-lem:General-Kernel-estimate-lambda-singularity}}:}
It suffices to estimate terms of the following form
\begin{align} \label{kernel-decom}
& \frac{{\mathbbm{1}}_{j,x^{\prime}}(y^{\prime})}{(|x'|+|y'|)^{2 L_2}} \int\limits_{\mathbb{R}^{n_1}} \left( |\lambda|^{- |\vartheta_1 + \vartheta_4|} \left( D^{\nu_1} \partial_{\lambda}^{\vartheta_3} \Theta(x'', \lambda) \right) \chi_{j'}(\lambda) \right) (x^{\prime}, z^{\prime}) \, \left( |\lambda|^{|\vartheta_4|} D^{\nu_2} \partial_{\lambda}^{\vartheta_4} S_{j}(\lambda) \right) (z^{\prime}, y^{\prime}) \, dz^{\prime} \\ 
& \nonumber =: \sum_{k=1}^{n_1} I^{1, \lambda, x''}_{k,+} (x^{\prime}, y^{\prime}) + \sum_{k=1}^{n_1} I^{1, \lambda, x''}_{k,-} (x^{\prime}, y^{\prime}) + I^{2, \lambda, x''} (x^{\prime}, y^{\prime}), 
\end{align} 
where 
\begin{align*} 
I^{1, \lambda, x''}_{k,+} (x^{\prime}, y^{\prime}) & = \frac{{\mathbbm{1}}_{j,x^{\prime}}(y^{\prime})}{(|x'|+|y'|)^{2 L_2}} \int_{\mathfrak{E}_{j} (x^{\prime}) \cap \mathcal{D}_k^+} \frac{\left(|\lambda|^{- |\vartheta_1 + \vartheta_4|} \left( D^{\nu_1} \partial_{\lambda}^{\vartheta_3} \Theta(x'', \lambda) \right) \chi_{j'}(\lambda) \right) (x^{\prime}, z^{\prime})}{(|x'|+z_k')^{2 L_2}} \\ 
& \quad \quad \quad \quad \quad \quad \quad \quad \quad \quad \quad  (|x'|+z_k')^{2 L_2} \left( |\lambda|^{|\vartheta_4|} D^{\nu_2} \partial_{\lambda}^{\vartheta_4} S_{j}(\lambda) \right) (z^{\prime}, y^{\prime}) \, dz^{\prime}, \\ 
I^{1, \lambda, x''}_{k,-} (x^{\prime}, y^{\prime}) & = \frac{{\mathbbm{1}}_{j,x^{\prime}}(y^{\prime})}{(|x'|+|y'|)^{2 L_2}} \int_{\mathfrak{E}_{j} (x^{\prime}) \cap \mathcal{D}_k^-} \frac{\left( |\lambda|^{- |\vartheta_1 + \vartheta_4|} \left( D^{\nu_1} \partial_{\lambda}^{\vartheta_3} \Theta(x'', \lambda) \right) \chi_{j'}(\lambda) \right) (x^{\prime}, z^{\prime})}{(|x'|-z_k')^{2 L_2}} \\ 
& \quad \quad \quad \quad \quad \quad \quad \quad \quad \quad \quad  (|x'|-z_k')^{2 L_2} \left( |\lambda|^{|\vartheta_4|} D^{\nu_2} \partial_{\lambda}^{\vartheta_4} S_{j}(\lambda) \right) (z^{\prime}, y^{\prime}) \, dz^{\prime}, \\ 
I^{2, \lambda, x''} (x^{\prime}, y^{\prime}) & = \frac{{\mathbbm{1}}_{j,x^{\prime}}(y^{\prime})}{(|x'|+|y'|)^{2 L_2}} \int_{\mathfrak{E}_{j} (x^{\prime})^c} \left(|\lambda|^{- |\vartheta_1 + \vartheta_4|} \left( D^{\nu_1} \partial_{\lambda}^{\vartheta_3} \Theta(x'', \lambda) \right) \chi_{j'} (\lambda) \right) (x^{\prime}, z^{\prime}) \\ 
& \quad \quad \quad \quad \quad \quad \quad \quad \quad \quad \quad \left( |\lambda|^{|\vartheta_4|} D^{\nu_2} \partial_{\lambda}^{\vartheta_4} S_{j} (\lambda) \right) (z^{\prime}, y^{\prime}) \, dz^{\prime}.
\end{align*}

\medskip \textbf{\underline{Analysis of the term $I^{1, \lambda, x''}_{k,+} (x^{\prime}, y^{\prime}) $ in \eqref{kernel-decom}}:} 

We can rewrite $I^{1, \lambda, x''}_{k,+} (x^{\prime}, y^{\prime})$ as follows: 
\begin{align*}
I^{1, \lambda, x''}_{k,+} (x^{\prime}, y^{\prime}) = \int_{\mathbb{R}^{n_1}} T^{1, \lambda, x'}_{k,+} (z^{\prime}, y^{\prime}) {\mathbbm{1}}_{j,x^{\prime}}(z^{\prime}) f^{1, \lambda, x}_{k, +} (z^{\prime}) \, dz^{\prime},  
\end{align*} 
where 
$$f^{1, \lambda, x}_{k,+} (z^{\prime}) = \mathbbm{1}_{\mathcal{D}_k^+} (z^{\prime}) \frac{\left( |\lambda|^{-|\vartheta_1 + \vartheta_4|} \left( D^{\nu_1} \partial_{\lambda}^{\vartheta_3} \Theta(x'', \lambda) \right) \chi_{j'}(\lambda) \right) (x^{\prime}, z^{\prime})}{(|x'|+z_k')^{2 L_2}},$$ 
and 
$$T^{1, \lambda, x'}_{k,+} (z^{\prime}, y^{\prime}) = {\mathbbm{1}}_{j,x^{\prime}}(y^{\prime}) \frac{(|x'|+z_k')^{2 L_2}}{(|x'|+|y'|)^{2 L_2}} \left( |\lambda|^{|\vartheta_4|} D^{\nu_2} \partial_{\lambda}^{\vartheta_4} S_{j} (\lambda) \right)(z^{\prime}, y^{\prime}).$$

Since $|\lambda|^{-|\vartheta_1 + \vartheta_4|} D^{\nu_1} \partial_{\lambda}^{\vartheta_3} \Theta(x'', \lambda) $ can be expressed as a finite linear combination of operators of the form $|\lambda|^{-\left(\frac{|\gamma_1 + \gamma_2|}{2} + |\vartheta_1 + \vartheta_4| \right)} \delta^{\gamma_1}(\lambda) \bar{\delta}^{\gamma_2}(\lambda) (\partial_{\lambda}^{\vartheta_3} \Theta(x'', \lambda))$ with $\gamma_1 + \gamma_2 = \nu_1$. Thus, it suffices to consider one such piece in the definition of the function $f^{1, \lambda, x}_{k,+}$. By abuse of notation, we use $f^{1, \lambda, x}_{k,+}$ to denote one such associated piece, namely, 
\begin{align*}
f^{1, \lambda, x}_{k,+}(z^{\prime}) & = \mathbbm{1}_{\mathcal{D}_k^+} (z^{\prime}) \frac{\left(|\lambda|^{-\left(\frac{|\gamma_1 + \gamma_2|}{2} + |\vartheta_1 + \vartheta_4| \right)} \left( \delta^{\gamma_1}(\lambda) \bar{\delta}^{\gamma_2}(\lambda)  \partial_{\lambda}^{\vartheta_3} \Theta(x'', \lambda) \right) \chi_{j'}(\lambda) \right) (x^{\prime}, z^{\prime})}{(|x'|+z_k')^{2 L_2}} \\
& = \mathbbm{1}_{\mathcal{D}_k^+} (z^{\prime}) \frac{\left( \left(\mathcal{L}(\lambda)_{ \vartheta_3, \vartheta_1 + \vartheta_4}^{\gamma_1, \gamma_2} \Theta(x'', \lambda) \right) \chi_{j'} (\lambda) \right) (x^{\prime}, z^{\prime})}{(|x'|+z_k')^{2 L_2}}.
\end{align*} 
 
Therefore, 
\begin{align*}
\int_{\mathbb{R}^{n_1}} \int_{\mathbb{R}^{n_2}} \left| I^{1, \lambda, x''}_{k,+} (x^{\prime}, y^{\prime}) \right|^2 d\lambda \, dy^{\prime} & \lesssim \int_{\mathbb{R}^{n_1}} \int_{\mathbb{R}^{n_2}} \left| \int_{\mathbb{R}^{n_1}} T^{1, \lambda, x'}_{k,+}(z^{\prime}, y^{\prime}) {\mathbbm{1}}_{j,x^{\prime}}(z^{\prime}) f^{1, \lambda, x}_{k, +} (z^{\prime}) \, dz^{\prime} \right|^2 d\lambda \, dy^{\prime} \\ 
& = \int_{\mathbb{R}^{n_1+n_2}} \left| \left( \left( A^{1, x}_{k, +} \right)^* F^{1, x}_{k, +} \right) (y) \right|^2 \, dy, 
\end{align*}
where $F^{1, x}_{k, +}$ is the function on $\mathbb{R}^{n_1 + n_2}$ given by 
$$ F^{1, x}_{k, +} (z) = {\mathbbm{1}}_{j,x^{\prime}}(z^{\prime}) \int_{\mathbb{R}^{n_2}} e^{ - i \lambda \cdot z^{\prime \prime}} f^{1, \lambda, x}_{k, +} (z^{\prime}) \, d\lambda $$
and $\left( A^{1, x}_{k, +} \right)^*$ is the adjoint of the operator $A^{1, x}_{k, +}$ and the kernel $A^{1, x}_{k, +} (z,y)$ of the operator $A^{1, x}_{k, +}$ given by 
$$ A^{1, x}_{k, +} (z,y) = \int_{\mathbb{R}^{n_2}} e^{ - i \lambda \cdot ( z^{\prime \prime} - y^{\prime \prime}) } {\mathbbm{1}}_{j,x^{\prime}}(y^{\prime}) \frac{(|x'|+z_k')^{2 L_2}}{(|x'|+|y'|)^{2 L_2}} \left( |\lambda|^{|\vartheta_4|} D^{\nu_2} \partial_{\lambda}^{\vartheta_4}S_{j}(\lambda) \right) (z^{\prime}, y^{\prime}) \, d \lambda.$$

We therefore have 
\begin{align} \label{est:mauceri-type-term1-calc-op-norm}
\int_{\mathbb{R}^{n_1}} \int_{\mathbb{R}^{n_2}} \left| I^{1, \lambda, x''}_{k,+} (x^{\prime}, y^{\prime}) \right|^2 d\lambda \, dy^{\prime}  \lesssim \left\| A^{1, x}_{k, +} \right\|^2_{op} \left\| F^{1, x}_{k, +} \right\|^2_2.  
\end{align}

Now, the $L^2$-norm of the function $F^{1, x}_{k, +}$ comes from the assumption \eqref{def:grushin-kernel-Maucheri-hormander-cond1-L0-N0}. More precisely, 
\begin{align} \label{est:mauceri-type-term1-calc-op-norm-easy-part} 
\left\| F^{1, x}_{k, +} \right\|^2_2 & = \int_{\mathfrak{E}_{j} (x^{\prime}) \cap \mathcal{D}_k^+ } \int_{\mathbb{R}^{n_2}} \left| \frac{\left( \left( \mathcal{L}(\lambda)_{ \vartheta_3, \vartheta_1 + \vartheta_4}^{\gamma_1, \gamma_2} \Theta(x'', \lambda) \right) \chi_{j'} (\lambda) \right) (x^{\prime}, z^{\prime})}{(|x'|+z_k')^{2 L_2}}\right|^{2} d\lambda \, dz^{\prime} \\ 
\nonumber & \lesssim \int_{\mathfrak{E}_{j} (x^{\prime})}\int_{\mathbb{R}^{n_2}} \left| \frac{\left( \left( \mathcal{L}(\lambda)_{ \vartheta_3, \vartheta_1 + \vartheta_4}^{\gamma_1, \gamma_2} \Theta(x'', \lambda) \right) \chi_{j'} (\lambda) \right) (x^{\prime}, z^{\prime})}{(|x'|+ |z'|)^{2 L_2}}\right|^{2} d\lambda \, dz^{\prime} \\ 
\nonumber & \lesssim 2^{-j\left( |\gamma_1 + \gamma_2|+ 2 L_2 \right)} |B(x, 2^{-j/2})|^{-1}.
\end{align} 

We now turn to the main estimation, namely the operator norm $\left\| A^{1, x}_{k, +} \right\|_{op}$. Note that the kernel $A^{1, x}_{k, +} (z,y)$ is a finite linear combination of terms of the following form: 
$$\int_{\mathbb{R}^{n_2}} e^{- i \lambda \cdot ( z^{\prime \prime} - y^{\prime \prime})}{\mathbbm{1}}_{j,x^{\prime}}(y^{\prime})\frac{|x^{\prime}|^{b_1} {z^{\prime}_{k}}^{b_2}}{(|x'|+|y'|)^{2 L_2}} (z'-y')^{\nu_2} |\lambda|^{|\vartheta_4|}
\left( \partial_{\lambda}^{\vartheta_4} S_{j}(\lambda) \right) (z^{\prime}, y^{\prime}) d\lambda, $$ 
with $b_1 + b_2 = 2 L_2$.

Writing $z^{\prime}_{k} = (z^{\prime}_{k} - y^{\prime}_{k}) + y^{\prime}_{k}$, with arbitrary $b_3 + b_4 = b_2$ and $\tilde{\nu}_2 = \nu_2 + b_4 e_{k}$, it suffices to estimate terms of the following form: 
\begin{align*}
\int_{\mathbb{R}^{n_2}} e^{- i \lambda \cdot ( z^{\prime \prime} - y^{\prime \prime})}{\mathbbm{1}}_{j,x^{\prime}}(y^{\prime})\frac{|x^{\prime}|^{b_1}{y^{\prime}_{k}}^{b_3}} {(|x'|+|y'|)^{2 L_2}}  (z'-y')^{\tilde{\nu}_2} |\lambda|^{|\vartheta_4|}
\left( \partial_{\lambda}^{\vartheta_4} S_{j}(\lambda) \right) (z^{\prime}, y^{\prime}) \, d\lambda.
\end{align*} 

In view of Lemma \ref{weighted-kernel-estimate-3}, the above term can be written as a finite linear combination of terms of the following form 
\begin{align} \label{Reduced-kernel-expression-T-1}
& {\mathbbm{1}}_{j,x}(y^{\prime})\frac{|x^{\prime}|^{b_1} {y^{\prime}_{k}}^{b_3}}{(|x'|+|y'|)^{2 L_2}} {y^{\prime}}^{\alpha_{1}} \int\limits_{[0,1]^{N_1} \times \Omega^{N_2} \times [0,1]^{|\tilde{\nu}|}} \int_{\mathbb{R}^{n_2}} e^{-i \lambda \cdot (z''-y'')} |\lambda|^{|\vartheta_4|} \, \mathfrak{A}_l(\lambda) \\ 
\nonumber & \quad \sum_{\mu} C_{\mu, \vec{c}} \left( \tau^{\frac{1}{2} \theta_1} \partial_\tau^{\theta_2} \psi_{j}\right) ((2\mu + \tilde{1} + \vec{c}(\omega)) |\lambda|) \Phi_{\mu}^{\lambda}(z^{\prime}) \Phi_{\mu + \tilde{\mu}}^{\lambda}(y^{\prime}) g(\omega) \, d\lambda \, d \omega \\ 
\nonumber & =  {\mathbbm{1}}_{j,x^{\prime}}(y^{\prime}) \frac{|x^{\prime}|^{b_1} {y^{\prime}_{k}}^{b_3}}{(|x'|+|y'|)^{2 L_2}}  \int\limits_{[0,1]^{N_1} \times \Omega^{N_2} \times [0,1]^{|\tilde{\nu}|}} \int_{\mathbb{R}^{n_2}} e^{-i \lambda \cdot (z''-y'')} \left( |\lambda| {y^{\prime}} \right)^{\alpha_{1}} |\lambda|^{|\vartheta_4| - |\alpha_1|} \, \mathfrak{A}_l(\lambda) \\ 
\nonumber & \quad \quad \sum_{\mu} C_{\mu, \vec{c}} \left(\tau^{\frac{1}{2} \theta_1} \partial_\tau^{\theta_2} \psi_{j}\right) ((2\mu + \tilde{1} + \vec{c}(\omega)) |\lambda|) \Phi_{\mu}^{\lambda}(z^{\prime}) \Phi_{\mu + \tilde{\mu}}^{\lambda}(y^{\prime}) g(\omega) \, d\lambda \, d \omega, 
\end{align} 
where $N_1, N_2, | \alpha_1 | \leq |\vartheta_4| - l$, $|\mathcal{\theta}_1| \leq |\mathcal{\theta}_2| \leq |\tilde{\nu}_2| + 2 |\vartheta_4| - 2 l$, $|\mathcal{\theta}_2| - \frac{|\mathcal{\theta}_1|}{2} = \frac{|\tilde{\nu}_2|}{2} + |\vartheta_4| - l - \frac{|\alpha_1|}{2} \geq (|\tilde{\nu}_2| + |\vartheta_4| - l) / 2$, and $\mathfrak{A}_l$ is a continuous function on $\mathbb{R}^{n_2} \setminus \{0\}$ which is homogeneous of degree $-l$. 

Using the ideas from Remark $4.8$ of \cite{Bagchi-Garg-1} on how to tackle terms of the form $\left( |\lambda| {y^{\prime}} \right)^{\alpha_{1}}$ (in fact, these terms behave like $\tau^{\frac{\alpha_{1}}{2}}$), we deduce from \eqref{Reduced-kernel-expression-T-1} 
\begin{align} \label{Op-norm-1} 
\left\| A^{1, x}_{k, +} \right\|_{op} & \lesssim 2^{j (2 L_2 - b_1 - b_3)/2} 2^{j |\alpha_1|/2} 2^{j(|\vartheta_4| - |\alpha_1| - l)} 2^{-j \left( |\theta_2| - \frac{|\theta_1|}{2} \right)} \\ 
\nonumber & = 2^{j b_4/2} 2^{j |\alpha_1|/2} 2^{j(|\vartheta_4| - |\alpha_1| - l)} 2^{-j \left( \frac{|\tilde{\nu}_2|}{2} + |\vartheta_4| - l - \frac{|\alpha_1|}{2} \right)} \\ 
\nonumber & = 2^{j b_4/2} 2^{-j 
|\tilde{\nu}_2|/2} \\ 
\nonumber & = 2^{-j |\nu_2|/2}. 
\end{align}

In view of \eqref{est:mauceri-type-term1-calc-op-norm-easy-part}, \eqref{Op-norm-1} and the condition $|j'-j| \leq C L $, we get from \eqref{est:mauceri-type-term1-calc-op-norm} that 
\begin{align*}
|B(x, 2^{-j/2})| \int_{\mathbb{R}^{n_1}} \int_{\mathbb{R}^{n_2}} \left| I^{1, \lambda, x''}_{k,+} (x^{\prime}, y^{\prime}) \right|^2 d\lambda \, dy^{\prime} \lesssim 2^{-j |\nu_2|} 2^{-j\left( |\nu_1 + \nu_2| + 2 L_2 \right)} = 2^{-j(2L_1+2L_2)} = 2^{-2jL}.
\end{align*}
 
\medskip \textbf{\underline{Analysis of the term $I^{1, \lambda, x''}_{k, -} (x^{\prime}, y^{\prime}) $ in \eqref{kernel-decom}}:} 

It can be estimated essentially same as the term $I^{1, \lambda, x''}_{k,+} (x^{\prime}, y^{\prime})$. 

\medskip \textbf{\underline{Analysis of the term $I^{2, \lambda, x''} (x^{\prime}, y^{\prime})$ in \eqref{kernel-decom}}:} 

Similar to the analysis of $I^{1,\lambda, x''}_{k,+} (x^{\prime}, y^{\prime})$, we rewrite \begin{align*}
I^{2,\lambda, x''} (x^{\prime}, y^{\prime}) = \int_{\mathbb{R}^{n_1}} T^{2, \lambda}(z^{\prime},y^{\prime}) \left(1 - {\mathbbm{1}}_{j,x^{\prime}}(z^{\prime}) \right) f^{2, \lambda, x^{\prime}}(z^{\prime}) \, dz', 
\end{align*}
where 
$$f^{2, \lambda, x}(z^{\prime}) = |\lambda|^{- |\vartheta_1 + \vartheta_4|} \left( \left( D^{\nu_1} \partial_{\lambda}^{\vartheta_3}\Theta(x'', \lambda) \right) \chi_{j'}(\lambda) \right) (x^{\prime}, z^{\prime}),$$ 
and 
$$T^{2, \lambda, x'} (z^{\prime},y^{\prime}) = {\mathbbm{1}}_{j,x^{\prime}} (y^{\prime}) \frac{|\lambda|^{|\vartheta_4|}}{(|x'|+|y'|)^{2 L_2}} \left( D^{\nu_2} \partial_{\lambda}^{\vartheta_4} S_{j}(\lambda) \right)(z^{\prime}, y^{\prime}).$$ 

As earlier, with $\gamma_1 + \gamma_2 = \nu_1$, it suffices to analyse 
\begin{align*}
f^{2, \lambda, x}(z^{\prime}) &= \left(|\lambda|^{-\left(\frac{|\gamma_1 + \gamma_2|}{2} + |\vartheta_1 + \vartheta_4| \right)} \left( \delta^{\gamma_1}(\lambda) \bar{\delta}^{\gamma_2}(\lambda) \partial_{\lambda}^{\vartheta_3} \Theta(x'', \lambda) \right) \chi_{j'}(\lambda) \right) (x^{\prime}, z^{\prime}) \\
& = \left(\mathcal{L}(\lambda)_{ \vartheta_3, \vartheta_1+\vartheta_4}^{ \gamma_1, \gamma_2} \Theta(x'', \lambda)\chi_{j'}(\lambda)\right)(x^{\prime}, z^{\prime}), 
\end{align*}
which implies that 
\begin{align*}
& \int_{\mathbb{R}^{n_1}} \int_{\mathbb{R}^{n_2}} \left| I^{2, \lambda, x''} (x^{\prime}, y^{\prime}) \right|^2 d\lambda \, dy^{\prime} \\ 
& \quad \lesssim \int_{\mathbb{R}^{n_1}} \int_{\mathbb{R}^{n_2}} \left| \int_{\mathbb{R}^{n_1}} T^{2, \lambda, x'}(z^{\prime}, y^{\prime}) \left(1 - {\mathbbm{1}}_{j,x^{\prime}} (z^{\prime}) \right) f^{2, \lambda, x}(z^{\prime}) \, dz^{\prime} \right|^2 d\lambda \, dy^{\prime} \\ 
& \quad = \int_{\mathbb{R}^{n_1 + n_1}} \left| \left( \left( A^{2, x} \right)^* F^{2, x} \right) (y) \right|^2 \, dy, 
\end{align*}
where $F^{2,x}$ is the function given by 
$$ F^{2,x}(z) = \int_{\mathbb{R}^{n_2}} e^{- i \lambda \cdot z^{\prime \prime}} \left(1 - {\mathbbm{1}}_{j,x^{\prime}}(z^{\prime})\right)  f^{2, \lambda, x} (z^{\prime}) \, d\lambda $$
and $ \left( A^{2,x} \right)^*$ is the adjoint of the operator $A^{2,x}$, with the kernel $A^{2,x}(z,y)$ of the operator $A^{2,x}$ given by 
$$ A^{2,x}(z,y) = \int_{\mathbb{R}^{n_2}} e^{ - i \lambda \cdot ( z^{\prime \prime} - y^{\prime \prime}) } {\mathbbm{1}}_{j,x^{\prime}}(y^{\prime}) \frac{1}{(|x'|+|y'|)^{2 L_2}} \left( |\lambda|^{|\vartheta_4|} D^{\nu_2} \partial_{\lambda}^{\vartheta_4}S_{j}(\lambda) \right) (z^{\prime}, y^{\prime}) \, d \lambda.$$

We therefore have 
\begin{align} \label{est:mauceri-type-term2-calc-op-norm}
\int_{\mathbb{R}^{n_1}} \int_{\mathbb{R}^{n_2}} \left| I^{2, \lambda, x''} (x^{\prime}, y^{\prime}) \right|^2 d\lambda \, dy^{\prime} \lesssim \left\| A^{2,x} \right\|^2_{op} \left\| F^{2,x} \right\|^2_2. 
\end{align}

The $L^2$-norm of the function $F^{2, x}$ comes from the assumption \eqref{def:grushin-kernel-Maucheri-hormander-cond2-L0-N0}. More precisely, 
\begin{align} \label{est:mauceri-type-term1-calc-op-norm-easy-part-2} 
\left\| F^{2,x} \right\|^2_2 & = \int_{\mathfrak{E}_{j} (x^{\prime})^{c}} \int_{\mathbb{R}^{n_2}} \left|\left(\mathcal{L}(\lambda)_{ \vartheta_3, \vartheta_1 + \vartheta_4}^{\gamma_1, \gamma_2} \Theta(x'', \lambda) \chi_{j'}(\lambda))\right) (x^{\prime}, z^{\prime})\right|^{2} d\lambda \, dy^{\prime} \\ 
\nonumber & \lesssim 2^{-j\left( |\gamma_1 + \gamma_2|+ 4 L_2 \right)} |B(x, 2^{-j/2})|^{-1}. 
\end{align}

Finally, to estimate the operator norm of $A^{2,x}$, we apply ideas same as that done to estimate the operator norm of $A^{1,x}_{k, +}$ during the analysis of the term $I^{1, \lambda, x''}_{k, -}$. For this, using \eqref{Reduced-kernel-expression-T-1} we get that $A^{2,x}(z,y)$ can be written as a finite linear combination of the terms of the following form 
\begin{align} \label{Reduced-kernel-expression-T-2}
& {\mathbbm{1}}_{j,x^{\prime}}(y^{\prime}) \frac{1}{(|x'|+|y'|)^{2 L_2}}  \int\limits_{[0,1]^{N_1} \times \Omega^{N_2} \times [0,1]^{|\nu|}} \int_{\mathbb{R}^{n_2}} e^{-i \lambda \cdot (z''-y'')} \left( |\lambda| {y^{\prime}} \right)^{\alpha_{1}} |\lambda|^{|\vartheta_4| - |\alpha_1|} \, \mathfrak{A}_l(\lambda) \\ 
\nonumber & \quad \quad \sum_{\mu} C_{\mu, \vec{c}} \left(\tau^{\frac{1}{2} \theta_1} \partial_\tau^{\theta_2} \psi_{j}\right) ((2\mu + \tilde{1} + \vec{c}(\omega)) |\lambda|) \Phi_{\mu}^{\lambda}(z^{\prime}) \Phi_{\mu + \tilde{\mu}}^{\lambda}(y^{\prime}) g(\omega) \, d\lambda \, d \omega, 
\end{align} 
where the indices and the conditions on them are same as to those in \eqref{Reduced-kernel-expression-T-1} with the understanding that here $b_1 = b_2 = 0$. 

From the expression \eqref{Reduced-kernel-expression-T-2}, we get 
\begin{align} \label{Op-norm-2} 
\left\| A^{2,x} \right\|_{op} \lesssim 2^{j L_2} 2^{j |\alpha_1|/2} 2^{j(|\vartheta_4| - |\alpha_1| - l)} 2^{-j (|\theta_2| - \frac{|\theta_1|}{2})} = 2^{j L_2} 2^{-j |\nu_2|/2}.
\end{align}

In view of \eqref{est:mauceri-type-term1-calc-op-norm-easy-part-2}, \eqref{Op-norm-2} and the condition $|j'-j|\leq C L $, we get from \eqref{est:mauceri-type-term2-calc-op-norm} that 
\begin{align*}
& |B(x, 2^{-j/2})| \int_{\mathbb{R}^{n_1}} \int_{\mathbb{R}^{n_2}} \left| I^{2, \lambda, x''} (x^{\prime}, y^{\prime}) \right|^2 d\lambda \, dy^{\prime} \\ 
& \quad \lesssim 2^{2j L_2} 2^{-j|\nu_2|} 2^{-j\left( |\gamma_1 + \gamma_2| + 4 L_2 \right)} = 2^{-j(2L_1+2L_2)} = 2^{-2jL}, 
\end{align*}
completing the estimation of term $I$ of \eqref{eq:first-break-lem:General-Kernel-estimate-lambda-singularity}. 

Term $II$ of \eqref{eq:first-break-lem:General-Kernel-estimate-lambda-singularity} could be estimated in an analogous manner, so we omit its proof. This completes the proof of Lemma \ref{lem:General-Kernel-estimate-lambda-singularity}. 
\end{proof}

\begin{lemma} \label{lem:General-Kernel-grad-estimate-x-lambda-singularity}
Suppose $\Theta$ satisfies the conditions \eqref{def:grushin-kernel-Maucheri-hormander-cond1-L0-N0} and \eqref{def:grushin-kernel-Maucheri-hormander-cond2-L0-N0}, for some $L_0 \in \mathbb{N}$ and $N_0 = 1$. Then for all $L \leq \frac{L_0 - 1}{2}$ and $q \in \mathbb{N}$, we have 
\begin{align} 
\sup_{x\in \mathbb{R}^{n_1+n_2}} |B(x, 2^{-j/2})| \int_{\mathbb{R}^{n_1+n_2}} d(x,y)^{4L} |X_{x} \Theta_{j}^{q}(x,y)|^2 \, dy & \lesssim_{L_0} 2^{-2j L}2^{j}, \label{General-square-grad-x-estimate-singularity-q}
\end{align}
\end{lemma}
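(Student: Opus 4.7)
The plan is to imitate the proof of Lemma \ref{lem:General-Kernel-estimate-lambda-singularity}, but to first apply the gradient $X_x$ to the kernel $\Theta_j^q(x,y)$ and then carry out the $L^2$-Plancherel argument on the resulting expression. The key observation is that each component of $X_x$ contributes an extra factor of $2^{j/2}$ in the operator-norm or pointwise bound---either as a scalar prefactor bounded by $|\lambda|^{1/2}\lesssim 2^{j/2}$ on the spectral support of $S_j(\lambda)$, or as the norm of $A_j(\lambda)^{(*)}$ restricted to scale-$2^j$ Hermite eigenspaces---which, once squared, yields the extra factor $2^j$ on the right-hand side.

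Concretely, I would first decompose $X_x\Theta_j^q(x,y)$ using commutator identities. For $X_j=\partial_{x_j'}$, writing $\partial_{x_j'}=\tfrac{1}{2}(A_j(\lambda)-A_j(\lambda)^*)$ and using $[A_j(\lambda),\Theta(x'',\lambda)]=-|\lambda|^{1/2}\delta_j(\lambda)\Theta(x'',\lambda)$ together with $[A_j(\lambda)^*,\Theta(x'',\lambda)]=|\lambda|^{1/2}\bar\delta_j(\lambda)\Theta(x'',\lambda)$ gives
\begin{align*}
\partial_{x_j'}(\Theta(x'',\lambda) S_j(\lambda))=\Theta(x'',\lambda)\cdot\partial_{x_j'}S_j(\lambda)-\tfrac{|\lambda|^{1/2}}{2}(\delta_j(\lambda)+\bar\delta_j(\lambda))\Theta(x'',\lambda)\cdot S_j(\lambda).
\end{align*}
For $X_{j,k}=x_j'\partial_{x_k''}$, the Leibniz rule splits $\partial_{x_k''}$ onto $e^{-i\lambda\cdot(x''-y'')}$ (producing $-i\lambda_k$) and onto $\Theta(x'',\lambda)$. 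The first piece, $-ix_j'\lambda_k(\Theta(x'',\lambda) S_j(\lambda))$, I rewrite further using $x_j'\Theta(x'',\lambda)=\Theta(x'',\lambda) x_j'+\tfrac{|\lambda|^{-1/2}}{2}(\bar\delta_j(\lambda)-\delta_j(\lambda))\Theta(x'',\lambda)$; the second piece $x_j'(\partial_{x_k''}\Theta(x'',\lambda))S_j(\lambda)$ is of the exact form ${x'}^{\alpha_0}\partial_{x''}^{\beta_0}\Theta(x'',\lambda)\cdot S_j(\lambda)$ with $|\alpha_0|=|\beta_0|=1$, covered by the hypothesis $N_0=1$.

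Each resulting piece is then estimated by running the argument of Lemma \ref{lem:General-Kernel-estimate-lambda-singularity} verbatim. The ``commutator pieces'' (those containing $(\delta_j+\bar\delta_j)\Theta$ or $(\bar\delta_j-\delta_j)\Theta$) carry one extra non-commutative derivative on $\Theta$, which accounts for the strengthened requirement $L_0\geq 2L+1$, together with a scalar factor $|\lambda|^{1/2}$ or $|\lambda_k||\lambda|^{-1/2}$ bounded by $2^{j/2}$ on the support of $S_j(\lambda)$ (where $|\lambda|\lesssim 2^j$). The ``residual pieces'' take the form $\Theta\cdot\tilde S_j^{\pm}(\lambda) A_j(\lambda)^{(*)}$ or $\Theta\cdot x_j' S_j(\lambda)$, arising from $A_j(\lambda)S_j(\lambda)=\psi_j(H(\lambda)+2|\lambda|)A_j(\lambda)$ and the analogous adjoint identity; their kernels differ from that of $\Theta\cdot S_j(\lambda)$ by a bounded index shift $\Phi_{\mu\pm e_j}^\lambda$ and a multiplicative factor $\sqrt{2\mu_j|\lambda|}\lesssim 2^{j/2}$ on the spectral support. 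The main technical point is to verify that Lemma \ref{weighted-kernel-estimate-3} remains applicable in the presence of such bounded shifts in the Hermite multi-index; this is routine, since bounded shifts already arise in Lemma \ref{first-layer-lem} (via $\Phi_{\mu+\tilde\mu}^\lambda$) and the iterative integration-by-parts manipulations underlying Lemma \ref{weighted-kernel-estimate-3} are insensitive to them. Squaring the $2^{j/2}$ gain then produces the claimed factor of $2^j$.
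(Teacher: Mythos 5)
Your proposal is correct and follows essentially the same route as the paper: split $X_x$ into $\partial_{x_j'}$ and $x_j'\partial_{x_k''}$ pieces, use $\partial_{x_j'}=\tfrac12(A_j(\lambda)-A_j(\lambda)^*)$ and the $\delta_j(\lambda),\bar\delta_j(\lambda)$ commutator identities (costing the one extra derivative behind $L\leq\frac{L_0-1}{2}$), invoke the $|\alpha_0|=|\beta_0|=1$ case of \eqref{def:grushin-kernel-Maucheri-hormander-cond1-L0-N0} for the $x_j'\partial_{x_k''}\Theta$ term, and rerun the weighted Plancherel argument of Lemma \ref{lem:General-Kernel-estimate-lambda-singularity}, with the extra $2^{j/2}$ (hence $2^j$ after squaring) coming from the $|\lambda|^{1/2}$-type scalars and from $A_j(\lambda)^{(*)}$ acting on scale-$2^j$ spectral pieces. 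The only cosmetic difference is bookkeeping: the paper lands the residual $A_k(\lambda)$ on the right of $S_j(\lambda)$ and handles the cross term where a $\lambda$-derivative hits the factor $\lambda_l$ by replacing $(|x'|+|y'|)^{2L_2}$ with $(|x'|+|y'|)^{2L_2-1}$, whereas you absorb these via the intertwining $A_j(\lambda)S_j(\lambda)=\psi_j(H(\lambda)+2|\lambda|)A_j(\lambda)$ and homogeneous scalar factors within the same machinery.
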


Before we prove Lemma \ref{lem:General-Kernel-grad-estimate-x-lambda-singularity}, let us note that it immediately implies following analogous kernel estimates for $\Theta_{j}(x,y)$. 

\begin{lemma} \label{lem:General-Kernel-grad-estimate-x}
Suppose $\Theta$ satisfies the conditions \eqref{def:grushin-kernel-Maucheri-hormander-cond1-L0-N0} and \eqref{def:grushin-kernel-Maucheri-hormander-cond2-L0-N0}, for some $L_0 \in \mathbb{N}$ and $N_0 = 1$. Then for all $L \leq \frac{L_0 - 1}{2}$, we have 
\begin{align} 
\sup_{x\in \mathbb{R}^{n_1+n_2}} |B(x, 2^{-j/2})| \int_{\mathbb{R}^{n_1+n_2}} d(x,y)^{4L} |X_{x} \Theta_{j} (x,y)|^2 \, dy & \lesssim_{L_0} 2^{-2j L}2^{j}, \label{General-square-grad-x-estimate}
\end{align}
\end{lemma}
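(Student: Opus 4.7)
The plan is to mirror the deduction of Lemma \ref{lem:General-Kernel-estimate} from Lemma \ref{lem:General-Kernel-estimate-lambda-singularity}, now applied to $X_x \Theta_j$ in place of $\Theta_j$. Two ingredients are needed: the pointwise convergence $X_x \Theta_j^q(x,y) \to X_x \Theta_j(x,y)$ as $q \to \infty$, together with the uniform-in-$q$ bound already provided by Lemma \ref{lem:General-Kernel-grad-estimate-x-lambda-singularity}. Once both are in hand, Fatou's lemma closes the argument.

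For the convergence, I would first justify differentiating under the $\lambda$-integral in
$$ \Theta_j^q(x,y) = \int_{\mathbb{R}^{n_2}} e^{-i\lambda\cdot(x''-y'')} \Upsilon^q(\lambda) \Theta_j(x'', \lambda)(x', y') \, d\lambda $$
for each component of the horizontal gradient $X = (\partial_{x_j'}, \, x_j'\partial_{x_k''})$. The spectral operator $S_j = \psi_j(G)$ localizes the effective $\lambda$-support to $|\lambda| \lesssim 2^j$, and the hypotheses \eqref{def:grushin-kernel-Maucheri-hormander-cond1-L0-N0}--\eqref{def:grushin-kernel-Maucheri-hormander-cond2-L0-N0} with $N_0 = 1$ precisely encode the required control on ${x'}^{\alpha_0}\partial_{x''}^{\beta_0}\Theta(x'',\lambda)$ for $|\alpha_0|=|\beta_0|\leq 1$. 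This is exactly what the factor $x_j'$ coming from the vector field $X_{j,k}$ and the factor $-i\lambda_k$ coming from the exponential produce after pairing. Consequently, the integrand and its $X_x$-derivatives are dominated, uniformly in $q$, by a fixed integrable function of $\lambda$. Since $\Upsilon^q(\lambda) \to 1$ as $q \to \infty$ for $\lambda \neq 0$, dominated convergence yields the claimed pointwise convergence of $X_x\Theta_j^q(x,y)$ to $X_x\Theta_j(x,y)$.

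Next, Fatou's lemma gives
\begin{align*}
& |B(x,2^{-j/2})| \int_{\mathbb{R}^{n_1+n_2}} d(x,y)^{4L} |X_x\Theta_j(x,y)|^2 \, dy \\
& \qquad \leq \liminf_{q\to\infty} |B(x,2^{-j/2})| \int_{\mathbb{R}^{n_1+n_2}} d(x,y)^{4L} |X_x\Theta_j^q(x,y)|^2 \, dy \;\lesssim\; 2^{-2jL}2^j,
\end{align*}
the last bound being Lemma \ref{lem:General-Kernel-grad-estimate-x-lambda-singularity}, uniformly in $q \in \mathbb{N}$. Taking the supremum over $x \in \mathbb{R}^{n_1+n_2}$ yields \eqref{General-square-grad-x-estimate}.

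Since all the substantial kernel analysis is absorbed in Lemma \ref{lem:General-Kernel-grad-estimate-x-lambda-singularity}, I anticipate no new obstacle of real content; the only step requiring genuine care is confirming the uniform-in-$q$ $L^1(d\lambda)$ domination needed to commute $X_x$ with the $\lambda$-integral. This in turn reduces to observing that the $\psi_j$-factor imposes a compact $\lambda$-support (so any polynomial factor in $\lambda$ produced by $\partial_{x_k''}$ is harmless) and that the $|\alpha_0|=|\beta_0|=1$ instance of the hypotheses accommodates the multiplication by $x_j'$ in $X_{j,k}$.
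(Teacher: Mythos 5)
Your proposal is correct and matches the paper's own route: the paper deduces Lemma \ref{lem:General-Kernel-grad-estimate-x} from Lemma \ref{lem:General-Kernel-grad-estimate-x-lambda-singularity} exactly as Lemma \ref{lem:General-Kernel-estimate} from Lemma \ref{lem:General-Kernel-estimate-lambda-singularity}, noting only that one must check pointwise convergence of the gradients of $\Theta_j^q(x,y)$ to those of $\Theta_j(x,y)$ (left to the reader there), and then apply Fatou's lemma with the uniform-in-$q$ bound. Your dominated-convergence justification of that convergence, using the compact $\lambda$-support from $\psi_j$ and the $N_0=1$ hypotheses, simply fills in the detail the paper omits.
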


Lemma \ref{lem:General-Kernel-grad-estimate-x} can be deduced from Lemma \ref{lem:General-Kernel-grad-estimate-x-lambda-singularity} in the same way as Lemma \ref{lem:General-Kernel-estimate} from Lemma \ref{lem:General-Kernel-estimate-lambda-singularity}. The only change in the proof would be to argue that the gradients of $\Theta_{j}^{q} (x,y)$ converge pointwise to those of $\Theta_{j} (x,y)$ as $q \to \infty$. Since it is quite straightforward to verify, we leave the details and proceed to prove Lemma \ref{lem:General-Kernel-grad-estimate-x-lambda-singularity}. 

\begin{proof} [Proof of Lemma \ref{lem:General-Kernel-grad-estimate-x-lambda-singularity}] 
 As in the proof of Lemma \ref{lem:General-Kernel-estimate-lambda-singularity}, we divide the integral in the claimed estimate \eqref{General-square-grad-x-estimate-singularity-q} into two regions, namely, 
\begin{align} \label{eq:first-break-lem:General-Kernel-estimate-lambda-singularity-2}
I & = \left| B(x,2^{-j/2}) \right| \int_{\mathbb{R}^{n_2}} \int_{\mathfrak{E}_{j} (x^\prime)} |x'-y'|^{4L_1}\frac{|x''-y''|^{4 L_2}}{(|x'|+|y'|)^{4 L_2}} | X_x \Theta_{j}^{q}(x,y)|^2 \, dy^{\prime} \, dy^{\prime \prime}, \\ 
\nonumber \textup{and} \quad II & = \left| B(x,2^{-j/2}) \right| \int_{\mathbb{R}^{n_2}} \int_{\mathfrak{E}_{j} (x^{\prime})^c} |x'-y'|^{4L_1} |x''-y''|^{2 L_2} |X_x \Theta_{j}^{q}(x,y)|^2 \, dy^{\prime} \, dy^{\prime \prime}, 
\end{align}
with arbitrary $L_1$ and $L_2$ such that $L_1 + L_2=L$. As earlier, the sets $\mathfrak{E}_{j} (x^{\prime})$ are given by \eqref{def:set-Cj(x-prime)}. 

Once again, we only estimate term $I$ of \eqref{eq:first-break-lem:General-Kernel-estimate-lambda-singularity-2} and the estimation for term $II$ could be done in a similar manner. 

Note that 
\begin{align} \label{eq:first-break-lem:General-Kernel-estimate-lambda-singularity-2-derivative} 
|X_{x}\Theta_{j}^{q}(x,y)| \leq \sum_{k=1}^{n_1} \left| \frac{\partial}{\partial x_k^{\prime}} \Theta_{j}^{q}(x,y) \right| + \sum_{k=1}^{n_1}\sum_{l=1}^{n_2} \left| x_k^{\prime} \frac{\partial}{\partial x^{\prime \prime}_{l}} \Theta_{j}^{q}(x,y) \right|.
\end{align} 

\medskip \noindent \textbf{\underline{Estimation of the kernel corresponding to $\frac{\partial}{\partial x'_{k}}\Theta_{j}^{q}(x,y)$}:} 
We first consider terms of the type $\frac{\partial}{\partial x'_{k}}\Theta_{j}^{q}(x,y)$. It boils down to estimate terms of the form $|x^{\prime}-y^{\prime}|^{2L_1}|x^{\prime\prime}-y^{\prime\prime}|^{2L_2}\frac{\partial}{\partial x^{\prime}_{k}}\Theta_{j}^{q}(x,y)$ where $L_1+L_2=L$. Since $|x^{\prime\prime}-y^{\prime\prime}|$ commutes with $\frac{\partial}{\partial x^{\prime}_{k}}$, it is equivalent to estimate $|x^{\prime}-y^{\prime}|^{2L_1}\frac{\partial}{\partial x^{\prime}_{k}} \left(|x^{\prime\prime}-y^{\prime\prime}|^{2L_2}\Theta_{j}^{q}(x,y)\right).$ 

Since $\frac{\partial}{\partial x'_{k}}=\frac{1}{2}(A_{k}(\lambda) - A_{k}(\lambda)^{*})$, following the arguments leading to \eqref{est:Mauceri-type-lambda-singularity}, we are led to estimate 
\begin{align}\label{Plancherel-q-part-remove}
\int_{\mathbb{R}^{n_1}} \int_{\mathbb{R}^{n_2}}  \frac{\left| \left( D^{\alpha} A_{k}(\lambda) \partial^{\vartheta_2}_{\lambda} \Theta_{j}(x'', \lambda) \right) (x^\prime,y^\prime)\right|^{2}}{|\lambda|^{2|\vartheta_1|}} d\lambda \, dy^{\prime},
\end{align}
and an analogous term with $A_{k}(\lambda)$ replaced by $A_{k}(\lambda)^*$. 

Using also the relation $A_k(\lambda) M(\lambda) = |\lambda|^{\frac{1}{2}} \delta_{k}(\lambda) M(\lambda) + M(\lambda)  A_{k}(\lambda),$ we can perform calculations similar to the ones leading to \eqref{est:Mauceri-type-lambda-singularity-decompose} to argue that we need to analyse kernels of operators $|\lambda|^{-|\vartheta_1| + \frac{1}{2}} D^{\alpha} \delta_{k}(\lambda) \partial ^{\vartheta_2}_{\lambda} \Theta_{j}(x'', \lambda)$ and $|\lambda|^{-|\vartheta_1|} \left( D^{\alpha} \partial^{\vartheta_2}_{\lambda} \Theta_{j}(x'', \lambda) \right) A_{k}(\lambda)$, with $|\alpha| = 2 L_1$ and $|\vartheta_1 + \vartheta_2| = 2 L_2$, and we analyse kernels of these operators separately.

Operator $|\lambda|^{-|\vartheta_1| + \frac{1}{2}} D^{\alpha} \delta_{k}(\lambda) \partial_{\lambda}^{\vartheta_2} \Theta_{j}(x'', \lambda)$ is a finite linear combination of operators of the form
\begin{align*}
|\lambda|\left( |\lambda|^{-\left( \frac{|\gamma_1 + \gamma_2| + 1}{2} + |\vartheta_1| \right) } \delta^{\gamma_1}(\lambda) \bar{\delta}^{\gamma_2}(\lambda) \delta_{k}(\lambda) \partial ^{\vartheta_2}_{\lambda} \Theta_{j}(x'', \lambda) \right), \quad \text{with } |\gamma_1 + \gamma_2| = 2 L_1.
\end{align*}

Writing 
$$ K_1(x , y) = \int_{\mathbb{R}^{n_2}} e^{-i \lambda \cdot (x^{\prime\prime} - y^{\prime\prime})}  \left( |\lambda| \left(|\lambda|^{-\left( \frac{|\gamma_1| + |\gamma_2| + 1}{2} + |\vartheta_1| \right)} \delta^{\gamma_1 + e_k}(\lambda) \bar{\delta}^{\gamma_2}(\lambda) \partial ^{\vartheta_2}_{\lambda} \Theta_{j}(x'', \lambda) \right) \right) (x^{\prime},y^{\prime}) \, d\lambda, $$
one can essentially repeat the proof of Lemma  \ref{lem:General-Kernel-estimate-lambda-singularity} to show that  
\begin{align}\label{Grad-ker-1}
\left| B(x,2^{-j/2}) \right| \int_{\mathfrak{E}_{j} (x^{\prime})} \int_{\mathbb{R}^{n_2}} \left| \frac{K_1(x,y)}{(|x^{\prime}| + |y^{\prime}| )^{2L_2}} \right|^{2} \, dy^{\prime} \, dy^{\prime\prime} & \lesssim 2^{-2jL} 2^j.
\end{align}
Here the additional growth factor $2^j$ is a resultant of the extra $|\lambda|$ in the integral in the definition of $K_1 (x,y)$. 
 
On the other hand, the operator $|\lambda|^{-|\vartheta_1|} \left( D^{\alpha} \partial ^{\vartheta_2}_{\lambda}\Theta_{j}(x'', \lambda) \right) A_{k}(\lambda) $ is a finite linear combination of operators of the form 
\begin{align*}
\left(|\lambda|^{-\left(\frac{|\gamma_1 + \gamma_2|}{2} + |\vartheta_1|  \right)} \delta^{\gamma_1}(\lambda) \bar{\delta}^{\gamma_2}(\lambda) \partial ^{\vartheta_2}_{\lambda} \Theta_{j}(x'', \lambda) \right) A_k(\lambda), \quad \text{with } |\gamma_1 + \gamma_2| = 2 L_1.
\end{align*}
 
Let us write 
$$ K_2 (x, y) = \int_{\mathbb{R}^{n_2}} e^{-i \lambda \cdot (x^{\prime\prime} - y^{\prime\prime})} \left( \left(|\lambda|^{-\left(\frac{|\gamma_1 + \gamma_2|}{2} + |\vartheta_1|  \right)} \delta^{\gamma_1}(\lambda) \bar{\delta}^{\gamma_2}(\lambda) \partial ^{\vartheta_2}_{\lambda} \Theta_{j}(x'', \lambda) \right) A_k(\lambda) \right) (x^{\prime},y^{\prime}) \, d\lambda. $$ 

Once again, we can repeat the proof of Lemma \ref{lem:General-Kernel-estimate-lambda-singularity}. This time we have an extra $A_k(\lambda)$ in the integral of the kernel $K_2(x,y)$ which can in fact be clubbed in the calculations of $S_j$. Since $ A_{k}(\lambda) \Phi^{\lambda}_{\mu} = (2\mu_k |\lambda|)^{1/2} \Phi^{\lambda}_{\mu-e_{k}} $, an extra growth factor of $\tau^{1/2}$ will appear in the expressions analogous to \eqref{Reduced-kernel-expression-T-1} and \eqref{Reduced-kernel-expression-T-2}. So, we again have an extra growth of $2^{j/2}$ in the concerning operator norms. Overall, we get 
\begin{align} \label{grad-ker-2}
|B(x, 2^{-j/2})| \int_{\mathfrak{E}_{j} (x^{\prime})} \int_{\mathbb{R}^{n_2}} \left| \frac{K_2 (x,y)}{(|x^{\prime}| + |y^{\prime}| )^{2L_2} } \right|^{2} \, dy^{\prime} \, dy^{\prime\prime} & \lesssim 2^{-2jL} 2^j. 
\end{align}
 
This completes the proof of the claimed estimation of the kernel for $\frac{\partial}{\partial x'_{k}}\Theta_{j}^{q}(x,y)$. 

\medskip \noindent \textbf{\underline{Estimation of the kernel corresponding to $x^{\prime}_k \frac{\partial}{\partial x^{\prime \prime}_{l}}\Theta_{j}^{q}(x,y)$}:} 
Note that we can write 
\begin{align*}
& x^{\prime}_{k} \frac{\partial}{\partial x^{\prime\prime}_l} \Theta_j^{q} (x,y) \\ 
& = -i \int_{\mathbb{R}^{n_2}} e^{- i \lambda \cdot (x^{\prime\prime}-y^{\prime\prime})}\lambda_l x_{k}^{\prime} \Theta_{j}^q(x'', \lambda) (x^{\prime},y^{\prime}) \, d\lambda + \int_{\mathbb{R}^{n_2}} e^{- i \lambda \cdot (x^{\prime\prime}-y^{\prime\prime})} x_{k}^{\prime} \frac{\partial}{\partial x^{\prime\prime}_l} \Theta_{j}^q(x'', \lambda) (x^{\prime},y^{\prime}) \, d\lambda, 
\end{align*} 
and therefore, when we apply $|x^{\prime\prime}-y^{\prime\prime}|^{2L_2}$ to $x^{\prime}_{k}\frac{\partial}{\partial x^{\prime\prime}_l}\Theta_j^{q} (x,y)$, we get by Leibniz rule three types of terms, corresponding to 
\begin{itemize}
\item $\lambda_l x^{\prime}_k \left( \partial ^{\vartheta}_{\lambda} \Theta_{j}^q(x'', \lambda) \right) (x^{\prime}, y^{\prime})$, 

\item $x^{\prime}_k \left( \partial^{\vartheta - e_l}_{\lambda} \Theta_{j}^q(x'', \lambda) \right) (x^{\prime}, y^{\prime})$, 

\item $ x^{\prime}_k \left( \partial ^{\vartheta}_{\lambda} \frac{\partial}{\partial x^{\prime\prime}_l} \Theta_{j}^q(x'', \lambda) \right) (x^{\prime}, y^{\prime})$, 
\end{itemize}
with $|\vartheta|=2L_2$ . 
 
Let us first consider the term associated to 
$$\lambda_l x^{\prime}_k \left( \partial ^{\vartheta}_{\lambda} \Theta_{j}^q(x'', \lambda) \right) (x^{\prime},y^{\prime}) =  \frac{\lambda_l}{|\lambda|}|\lambda| x^{\prime}_k \left( \partial ^{\vartheta}_{\lambda} \Theta_{j}^q(x'', \lambda) \right) (x^{\prime}, y^{\prime}).$$
Since $|\lambda| x^{\prime}_k = \frac{1}{2} (A_{k}(\lambda) + A_{k}(\lambda))^* $, we have 
$$\frac{\lambda_l}{|\lambda|} |\lambda| x^{\prime}_k \left( \partial ^{\vartheta}_{\lambda} \Theta_{j}^q(x'', \lambda) \right) (x^{\prime}, y^{\prime}) = \frac{\lambda_l}{2|\lambda|} \left( A_{k}(\lambda) + A_{k}(\lambda)^* \right) \partial^{\vartheta}_{\lambda} \Theta_{j}^q(x'', \lambda) (x^{\prime}, y^{\prime}),$$ 
and these type of terms have already been dealt with in \eqref{Grad-ker-1} and \eqref{grad-ker-2}. Therefore, we get the desired result. 

Next, let us consider the term corresponding to $x^{\prime}_k \partial ^{\vartheta-e_l}_{\lambda} \Theta_{j}^{q} (x'', \lambda)$. For this term, note that because of the presence of the factor $x^{\prime}_k$, when we perform calculations leading to \eqref{kernel-decom}, we would actually have to take $(|x'|+|y'|)^{2 L_2 - 1}$ in the denominator of each of the terms $I^{1, \lambda, x''}_{k,+} (x^{\prime}, y^{\prime})$, $I^{1, \lambda, x''}_{k,-} (x^{\prime}, y^{\prime})$ and $I^{2, \lambda, x''} (x^{\prime}, y^{\prime})$. Afterwards, we repeat those calculations, and the resultant estimate would have $L_2 - \frac{1}{2}$ instead of $L_2$. This explains a maximum of extra growth of $2^j$. 

Finally, let us consider the term corresponding to $x^{\prime}_k \partial ^{\vartheta}_{\lambda} \frac{\partial}{\partial x^{\prime\prime}_l} \Theta_{j}^q(x'', \lambda)$. For this term, note that while we have already estimated terms of the form $\partial^{\vartheta}_{\lambda} \Theta_{j}^q(x'', \lambda)$, the presence of $x^{\prime}_k \frac{\partial}{\partial x^{\prime\prime}_l}$ amounts to $|\alpha_0| = |\beta_0| = 1$ in condition \eqref{def:grushin-kernel-Maucheri-hormander-cond1-L0-N0} which results in the growth factor as in other two cases.

Combining all the estimates above we obtain 
\begin{align*}
|B(x, 2^{-j/2})| \int_{\mathbb{R}^{n_1+n_2}} d(x,y)^{4L} |X_{x} \Theta_{j}(x,y)|^2 \, dy \lesssim  2^{-2jL} 2^{j}, 
\end{align*}
which is the claimed estimate \eqref{General-square-grad-x-estimate-singularity-q}. 
This completes the proof of the Lemma \ref{lem:General-Kernel-grad-estimate-x-lambda-singularity}. 
\end{proof}

Using Lemma \ref{lem:General-Kernel-grad-estimate-x} we can prove the following estimates. 

\begin{corollary} \label{Grad-ker}
Suppose $\Theta$ satisfies the conditions \eqref{def:grushin-kernel-Maucheri-hormander-cond1-L0-N0} and  \eqref{def:grushin-kernel-Maucheri-hormander-cond2-L0-N0} for $L_0 = \floor*{Q/2} + 3$ and $N_0 = 1$, then we have 
\begin{align} 
\int_{\mathbb{R}^{n_1 + n_2}} d(x,y)^{Q+1} |X_{x} \Theta_{j}(x,y)|^{2} \, dy & \lesssim 2^{j/2}. \label{Cor-grad-x-esti-higher-derivative} 
\end{align}
\end{corollary}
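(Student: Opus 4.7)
The corollary is a consequence of Lemma \ref{lem:General-Kernel-grad-estimate-x} combined with a splitting of the integration domain at the natural scale $r_j = 2^{-j/2}$. The key auxiliary fact is the volume lower bound $|B(x, 2^{-j/2})| \gtrsim 2^{-jQ/2}$, which is immediate from the asymptotic $|B(x, r)| \sim r^{n_1+n_2} \max\{r, |x'|\}^{n_2}$ recalled in the introduction.

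Write $\int_{\mathbb{R}^{n_1+n_2}} d(x,y)^{Q+1} |X_x \Theta_j(x,y)|^2 \, dy = I_{\mathrm{in}} + I_{\mathrm{out}}$, where $I_{\mathrm{in}}$ integrates over $\{y : d(x,y) \leq 2^{-j/2}\}$ and $I_{\mathrm{out}}$ over its complement. For the inner piece, bound $d(x,y)^{Q+1} \leq 2^{-(Q+1)j/2}$, apply Lemma \ref{lem:General-Kernel-grad-estimate-x} with $L = 0$, and then use the volume lower bound to conclude $I_{\mathrm{in}} \lesssim 2^{-(Q+1)j/2} \cdot 2^j \cdot 2^{jQ/2} = 2^{j/2}$.

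For the outer piece, select the integer $L = \floor*{Q/4}+1$, which satisfies both $4L \geq Q+1$ and $L \leq (L_0-1)/2$ for the given $L_0 = \floor*{Q/2}+3$; the latter reduces to the elementary inequality $\floor*{Q/4} \leq \floor*{Q/2}/2$, which holds for all $Q \in \mathbb{N}$ (the tight case being $Q \equiv 3 \pmod 4$, where $4L = Q+1$ exactly). Since $Q+1-4L \leq 0$ and $d(x,y) > 2^{-j/2}$ on the outer region, we obtain the pointwise estimate $d(x,y)^{Q+1} \leq 2^{(4L-Q-1)j/2} \cdot d(x,y)^{4L}$, so by Lemma \ref{lem:General-Kernel-grad-estimate-x} and the volume lower bound,
\begin{align*}
I_{\mathrm{out}} \leq 2^{(4L-Q-1)j/2} \int d(x,y)^{4L} |X_x \Theta_j(x,y)|^2 \, dy \lesssim 2^{j (1 - (Q+1)/2 + Q/2)} = 2^{j/2}.
\end{align*}
Summing the two contributions yields the claimed bound. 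The only nontrivial point of the argument is the range check for $L$; everything else is routine manipulation using the weighted Plancherel estimate of Lemma \ref{lem:General-Kernel-grad-estimate-x} and the volume asymptotic.
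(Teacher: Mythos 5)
Your argument is correct, and it takes a mildly different route from the paper's. The paper proves the corollary by applying Lemma \ref{lem:General-Kernel-grad-estimate-x} for two integer values of $L$ whose corresponding powers $4L$ bracket $Q+1$ (e.g.\ $L=\frac{Q}{4}$ and $L=\frac{Q+4}{4}$ when $Q\equiv 0 \pmod 4$), interpolating between the two weighted estimates to reach the exponent $Q+1$, and then invoking $|B(x,2^{-j/2})|\gtrsim 2^{-jQ/2}$; this forces a case analysis according to $Q \bmod 4$. You instead split the $y$-integral at the natural scale $d(x,y)\sim 2^{-j/2}$, use the $L=0$ case of Lemma \ref{lem:General-Kernel-grad-estimate-x} on the inner region (legitimate, since $L=0$ lies in the admissible range $L\le \frac{L_0-1}{2}$) and the single choice $L=\floor*{Q/4}+1$ on the outer region, where $4L\ge Q+1$; your verification that $\floor*{Q/4}+1\le \frac{\floor*{Q/2}+2}{2}$ for all $Q$ is correct, so the hypothesis $L_0=\floor*{Q/2}+3$ suffices exactly as in the paper. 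The two approaches are of comparable depth — both ultimately exploit convexity in the exponent of $d(x,y)$ together with the volume lower bound — but yours handles all residues of $Q$ modulo $4$ uniformly with one choice of $L$, at the cost of an explicit domain decomposition, while the paper keeps a single global integral at the cost of the case distinction. One trivial remark: your parenthetical about the ``tight case'' is slightly off — equality in $4L=Q+1$ occurs for $Q\equiv 3\pmod 4$, but equality in the constraint $L\le\frac{L_0-1}{2}$ occurs for $Q\equiv 0,1\pmod 4$; this has no bearing on the validity of the proof.
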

\begin{proof} 
Let us first assume that $Q \equiv 0~(mod ~ 4)$. In that case one can choose $L=\frac{Q}{4}$ and $L=\frac{Q+4}{4}$ in \eqref{General-square-grad-x-estimate} to conclude that 
\begin{align}\label{grad-even-even}
|B(x,2^{-j/2})| \int_{\mathbb{R}^{n_1 + n_2}}d(x,y)^{Q} |X_{x} \Theta_{j}(x,y)|^{2} \, dy & \lesssim 2^{-jQ/2} 2^{j}, \\ 
\nonumber \text{and } \quad |B(x,2^{-j/2})| \int_{\mathbb{R}^{n_1 + n_2}}d(x,y)^{Q+4} |X_{x} \Theta_{j}(x,y)|^{2} \, dy & \lesssim 2^{-j(Q+4)/4} 2^{j}.
\end{align}
	 
Interpolating the two estimates in \eqref{grad-even-even}, we get
\begin{align*}
|B(x,2^{-j/2})| \int_{\mathbb{R}^{n_1 + n_2}}d(x,y)^{Q+1} |X_{x}\Theta_{j}(x,y)|^{2} \, dy \lesssim 2^{-jQ/2} 2^{j/2}, 
\end{align*}
which (in view of the fact that $|B(x,2^{-j/2})| \gtrsim 2^{-jQ/2}$) immediately implies that 
\begin{align*}
\int_{\mathbb{R}^{n_1 + n_2}}d(x,y)^{Q+1} |X_{x}\Theta_{j}(x,y)|^{2} \, dy \lesssim 2^{j/2}. 
\end{align*}

When $Q \equiv 1~(mod ~ 4)$, we can repeat the above process with $L = \frac{Q-1}{4}$ and $L = \frac{Q+3}{4}$, whereas for $Q \equiv 2~(mod ~ 4)$, we can do the same with the help of $L = \frac{Q-2}{4}$ and $L = \frac{Q+2}{4}$, and when $Q \equiv 3~(mod ~ 4)$, we can just consider $L = \frac{Q+1}{4}$. 

This completes the proof of Corollary \ref{Grad-ker}. 
\end{proof}

Next, we discuss an analogue of Lemma \ref{lem:General-Kernel-grad-estimate-x-lambda-singularity} with gradients in $y$-variable. 

\begin{lemma} \label{lem:General-Kernel-grad-estimate-y-lambda-singularity}
Suppose $\Theta$ satisfies the conditions \eqref{def:grushin-kernel-Maucheri-hormander-cond1-L0-N0} and \eqref{def:grushin-kernel-Maucheri-hormander-cond2-L0-N0}, for some $L_0 \in \mathbb{N}$ and $N_0 = 0$. Then for all $L \leq \frac{L_0}{2}$ and $q \in \mathbb{N}$, we have 
\begin{align} 
\sup_{x\in \mathbb{R}^{n_1+n_2}} \left| B(x, 2^{-j/2}) \right| \int_{\mathbb{R}^{n_1+n_2}} d(x,y)^{4L} |X_{y} \Theta_{j}^{q}(x,y)|^2 \, dy \lesssim  2^{-2j L} 2^{j}. \label{General-square-grad-y-estimate-singularity-q}
\end{align}
\end{lemma}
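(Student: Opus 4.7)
The plan is to follow the template of Lemma~\ref{lem:General-Kernel-grad-estimate-x-lambda-singularity} with the mirror-image modifications appropriate to the $y$-variable. I would begin by decomposing the integration in $y$ as in \eqref{eq:first-break-lem:General-Kernel-estimate-lambda-singularity-2}, with the factors $|x^\prime - y^\prime|^{4L_1}|x^{\prime\prime}-y^{\prime\prime}|^{4L_2}/(|x^\prime|+|y^\prime|)^{4L_2}$ on $\mathfrak{E}_j(x^\prime)$ and $|x^\prime - y^\prime|^{4L_1}|x^{\prime\prime}-y^{\prime\prime}|^{2L_2}$ on $\mathfrak{E}_j(x^\prime)^c$, and splitting the gradient as
\[
|X_y \Theta_j^q(x,y)| \leq \sum_{k=1}^{n_1} \left|\tfrac{\partial}{\partial y_k^\prime} \Theta_j^q(x,y)\right| + \sum_{k=1}^{n_1}\sum_{l=1}^{n_2} \left| y_k^\prime \tfrac{\partial}{\partial y_l^{\prime\prime}} \Theta_j^q(x,y)\right|,
\]
in analogy with \eqref{eq:first-break-lem:General-Kernel-estimate-lambda-singularity-2-derivative}. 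After multiplying by $|x^\prime-y^\prime|^{2L_1}|x^{\prime\prime}-y^{\prime\prime}|^{2L_2}$ and integrating by parts in $\lambda$ exactly as in the derivation of \eqref{est:Mauceri-type-lambda-singularity}, each term is reduced to the same Plancherel framework used in Lemma~\ref{lem:General-Kernel-estimate-lambda-singularity}, but now with an extra $A_k(\lambda)$ or $A_k(\lambda)^*$ acting from the right on $\Theta_j(x^{\prime\prime},\lambda)$.

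For the first sum, observe that $\tfrac{\partial}{\partial y_k^\prime} = \tfrac{1}{2}(A_k(\lambda) - A_k(\lambda)^*)$ acts only on the $y^\prime$-side of the kernel (the exponential $e^{-i\lambda\cdot(x^{\prime\prime}-y^{\prime\prime})}$ is untouched), and integration by parts in $y^\prime$ yields the kernel of $\Theta(x^{\prime\prime},\lambda) S_j(\lambda) A_k(\lambda)^{(*)}$. For the second sum, $\tfrac{\partial}{\partial y_l^{\prime\prime}}$ can only hit the exponential, producing the factor $i\lambda_l$; coupled with $y_k^\prime$ and the identity $|\lambda| y_k^\prime = \tfrac{1}{2}(A_k(\lambda)+A_k(\lambda)^*)$ on the $y^\prime$-side, we obtain $\lambda_l y_k^\prime = \tfrac{\lambda_l}{2|\lambda|}(A_k(\lambda)+A_k(\lambda)^*)$ with the bounded multiplier $|\lambda_l/|\lambda||\leq 1$. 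Crucially, no $\partial_{x^{\prime\prime}}^{\beta_0}$-derivative of $\Theta(x^{\prime\prime},\lambda)$ is ever produced, and no multiplicative factor $x^{\prime\alpha_0}$ gets attached to $\Theta$; this is precisely why $N_0 = 0$ is sufficient here, in contrast to the $N_0=1$ requirement in Lemma~\ref{lem:General-Kernel-grad-estimate-x-lambda-singularity}, where the $x_k^\prime$ factor sat on the same side as $\Theta$ and the $x^{\prime\prime}$-derivative landed on the symbol.

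Since in both cases the extra $A_k(\lambda)^{(*)}$ already sits on the same side as $S_j(\lambda)$, one can club it with $S_j(\lambda)$ and invoke Lemma~\ref{weighted-kernel-estimate-3}, exactly as in the treatment of the kernel $K_2$ in the proof of Lemma~\ref{lem:General-Kernel-grad-estimate-x-lambda-singularity}. The identity $A_k(\lambda) \Phi_\mu^\lambda = (2\mu_k |\lambda|)^{1/2} \Phi_{\mu - e_k}^\lambda$ (and its adjoint analogue) inserts an extra $\tau^{1/2}$ growth factor in the analogues of \eqref{Reduced-kernel-expression-T-1} and \eqref{Reduced-kernel-expression-T-2}, which on the dyadic support of $S_j$ (where $(2\mu+\tilde 1)|\lambda|\sim 2^j$) is of order $2^{j/2}$. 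Squared, this produces precisely the extra factor $2^j$ appearing on the right-hand side of \eqref{General-square-grad-y-estimate-singularity-q}. The main bookkeeping obstacle will be tracking the shift indices through the Leibniz-type expansion so that the number of $\mathcal{L}(\lambda)_{\vartheta_1,\vartheta_2}^{\nu,\gamma}$-derivatives of $\Theta$ invoked stays within $L_0$ for all $L \le L_0/2$; since no $x^{\prime\prime}$-derivatives of $\Theta$ are ever produced, this counting is in fact less demanding than in the $x$-gradient case and should yield the claim without further difficulty.
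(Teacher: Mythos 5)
Your plan is essentially the paper's own argument: integrating by parts in $y'$ to turn $\partial_{y'_k}=\tfrac12(A_k(\lambda)-A_k(\lambda)^*)$ into an $A_k(\lambda)^{(*)}$ acting from the right (clubbed with $S_j(\lambda)$, where $A_k(\lambda)\Phi^{\lambda}_{\mu}=(2\mu_k|\lambda|)^{1/2}\Phi^{\lambda}_{\mu-e_k}$ yields the extra $2^{j/2}$, hence $2^j$ after squaring), using that $\partial_{y''_l}$ only hits the exponential so that $\lambda_l y'_k=\tfrac{\lambda_l}{2|\lambda|}\bigl(A_k(\lambda)+A_k(\lambda)^*\bigr)$, and correctly identifying that no $\partial^{\beta_0}_{x''}$ or ${x'}^{\alpha_0}$ ever lands on $\Theta$, which is exactly why $N_0=0$ suffices here. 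The one detail you gloss over is the Leibniz cross-term in which a $\lambda$-derivative produced by the $|x''-y''|^{2L_2}$ weight falls on the factor $\lambda_l$, leaving a bare $y'_k\,\partial_{\lambda}^{\vartheta-e_l}\Theta_j^q$: the paper treats it by writing $y'_k=-(x'_k-y'_k)+x'_k$, handling $(x'_k-y'_k)$ as a $D^{e_k}$-commutator (one fewer $\lambda$-derivative, one more non-commutative derivative, so no extra growth) and $x'_k$ as in the $x$-gradient lemma; alternatively, your identity used without $\lambda_l$ costs a factor $|\lambda|^{-1}$, which the $\vartheta_2$-index in condition \eqref{def:grushin-kernel-Maucheri-hormander-cond1-L0-N0} is built to absorb, so either route closes this case within your framework.
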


\begin{proof}
 Following the terminology of the proof of Lemma \ref{lem:General-Kernel-grad-estimate-x-lambda-singularity}, for any $f \in C_c^{\infty} (\mathbb{R}^{n_1+n_2})$, we have 
\begin{align*}
& \int_{\mathbb{R}^{n_1}} \int_{\mathbb{R}^{n_2}} e^{-i\lambda\cdot x^{\prime\prime}} \frac{\partial}{\partial y^{\prime}_{k}} \left( \partial^{\vartheta_2}_{\lambda} \Theta_{j}^q(x'', \lambda) \right) (x^{\prime},y^{\prime}) f^{\lambda}(y^{\prime}) \, d\lambda \, dy' \\ 
& = - \int_{\mathbb{R}^{n_1}} \int_{\mathbb{R}^{n_2}} e^{-i\lambda\cdot x^{\prime\prime}} \left( \partial^{\vartheta_2}_{\lambda} \Theta_{j}^q(x'', \lambda) \right) (x^{\prime},y^{\prime})\frac{\partial f^{\lambda}}{\partial y'_{k}} (y) \, d\lambda \, dy^{\prime} \\ 
& = \frac{1}{2} \int_{\mathbb{R}^{n_1}} \int_{\mathbb{R}^{n_2}} e^{-i\lambda\cdot x^{\prime\prime}} \left( \partial^{\vartheta_2}_{\lambda} \Theta_{j}^q(x'', \lambda) \right) (x^{\prime},y^{\prime}) \left( \left( A_k(\lambda)^* - A_k(\lambda) \right) f^{\lambda} \right) (y^{\prime}) \, d\lambda \, dy^{\prime} \\ 
& = \frac{1}{2} \left\{ Op \left( \left( \partial^{\vartheta_2}_{\lambda} \Theta_{j}^q(x'', \lambda) \right) A_k(\lambda)^* \right) - Op \left( \left( \partial^{\vartheta_2}_{\lambda} \Theta_{j}^q(x'', \lambda) \right) A_{k}(\lambda) \right) \right\} f(x).
\end{align*}
But we have already dealt with these type of terms leading to estimate \eqref{grad-ker-2}. Hence we get the desired estimate for kernels corresponding to $X_{y} \Theta_{j}^{q}(x,y)$.

On the other hand, note that 
\begin{align*}
y^{\prime}_k \frac{\partial}{\partial y''_{l}}  \Theta_{j}^{q}(x,y) = i \int_{\mathbb{R}^{n_2}} e^{- i \lambda \cdot(x^{\prime \prime}-y^{\prime \prime})} (y^{\prime}_{k} \lambda_l ) \Theta_j^{q}(x^{\prime}, y^{\prime}) \, d\lambda,
\end{align*}
and therefore, when we apply $|x^{\prime}- y^{\prime}|^{2L_2}$ to $(y^{\prime}_{k} \lambda_l ) \Theta_j^{q}(x^{\prime}, y^{\prime})$, we get by Leibniz rule two types of terms, corresponding to 
\begin{itemize}
\item  $(y^{\prime}_{k} \lambda_l ) \left( \partial^{\vartheta}_{\lambda} \Theta_j^{q}(x'', \lambda) \right) (x^{\prime}, y^{\prime})$, 

\item $ y^{\prime}_{k} \left( \partial^{\vartheta-e_l}_{\lambda} \Theta_j^{q}(x'', \lambda) \right) (x^{\prime}, y^{\prime})$, 
\end{itemize} 
with $|\vartheta|=2L_2$. 

Let us first analyse the term $(y^{\prime}_{k} \lambda_l ) \left( \partial^{\vartheta}_{\lambda} \Theta_{j}^q(x'', \lambda) \right) (x^{\prime}, y^{\prime})$. For $f \in C_c^{\infty} (\mathbb{R}^{n_1+n_2})$, we have 
\begin{align*}
& \int_{\mathbb{R}^{n_1}} \int_{\mathbb{R}^{n_2}} e^{-i\lambda\cdot x^{\prime\prime}} y^{\prime}_{k} \lambda_l \left( \partial^{\vartheta}_{\lambda} \Theta_{j}^q(x'', \lambda) \right) (x^{\prime},y^{\prime}) f^{\lambda}(y^{\prime}) \, d\lambda \, dy' \\
& = \frac{1}{2} \int_{\mathbb{R}^{n_1}} \int_{\mathbb{R}^{n_2}} e^{-i\lambda\cdot x^{\prime\prime}} \left( \partial^{\vartheta}_{\lambda} \Theta_{j}^q(x'', \lambda) \right) (x^{\prime}, y^{\prime}) \frac{\lambda_l}{|\lambda|} \left( \left( A_k(\lambda)^{*} + A_k(\lambda) \right) f^{\lambda} \right) (y^{\prime}) \, d\lambda \, dy^{\prime} \\ 
& = \frac{1}{2} \left\{ Op \left( \left( \partial^{\vartheta}_{\lambda} \Theta_{j}^q(x'', \lambda) \right) \frac{\lambda_l}{|\lambda|} A_{k}(\lambda)^{*} \right) - Op \left( \left( \partial^{\vartheta}_{\lambda} \Theta_{j}^q(x'', \lambda) \right) \frac{\lambda_l}{|\lambda|}  A_{k}(\lambda) \right) \right\} f(x).
\end{align*} 

For operators with the expression as above, we can essentially repeat the analysis leading to \eqref{grad-ker-2}. 

Now, for the term $y^{\prime}_{k} \left( \partial^{\vartheta-e_l}_{\lambda} \Theta_j^{q}(x'', \lambda) \right) (x^{\prime}, y^{\prime})$, let us write
\begin{align*}
y^{\prime}_{k} \left( \partial^{\vartheta-e_l}_{\lambda} \Theta_j^{q}(x'', \lambda) \right) (x^{\prime}, y^{\prime}) & = - (x^{\prime}_{k} - y^{\prime}_{k}) \left( \partial^{\vartheta-e_l}_{\lambda} \Theta_j^{q} (x'', \lambda) \right) (x^{\prime}, y^{\prime}) \\ 
& \quad + x^{\prime}_{k} \left( \partial^{\vartheta-e_l}_{\lambda} \Theta_j^{q} (x'', \lambda) \right) (x^{\prime}, y^{\prime}). 
\end{align*}
Note that $(x^{\prime}_{k}-y^{\prime}_{k})$ corresponds to the action of the operator $D^{e_k}$ on the operator. Therefore, estimating the kernel associated to the term $(x^{\prime}_{k}-y^{\prime}_{k}) \left( \partial^{\vartheta-e_l}_{\lambda} \Theta_j^{q}(x'', \lambda) \right) (x^{\prime}, y^{\prime})$ is equivalent to estimating the kernel associated to the term $\left( D^{e_k} \partial^{\vartheta-e_l}_{\lambda} \Theta_j^{q}(x'', \lambda) \right) (x^{\prime}, y^{\prime})$, which can be done using the methodology of Lemma \ref{lem:General-Kernel-estimate}. Here we will not have any extra growth of $2^j$, because even though we have one less derivative in $\lambda$-variable but we also have one extra non-commutative derivative. We have already discussed the estimation of kernel associated to the term $x^{\prime}_{k} \left( \partial^{\vartheta-e_l}_{\lambda} \Theta_j^{q}(x'', \lambda) \right) (x^{\prime}, y^{\prime})$ in the previous Lemma. Combining all the above estimates we get \eqref{General-square-grad-y-estimate-singularity-q}, and this completes the proof of Lemma \ref{lem:General-Kernel-grad-estimate-y-lambda-singularity}. 
\end{proof}

As earlier, an analogue of Lemma \ref{lem:General-Kernel-grad-estimate-y-lambda-singularity} is true when one replaces $\Theta_{j}^{q}(x,y)$ by $\Theta_{j} (x,y)$. 

\subsection{\texorpdfstring{$L^{\infty}$}{}-weighted Plancherel (gradient) estimates} \label{subsec:mauceri-type-L-infty-weighted-kernel-estimates} In this subsection, we shall see that by assuming more (nearly double) number of derivatives in the conditions of the type \eqref{def:grushin-kernel-Maucheri-hormander-cond1-L0-N0} and \eqref{def:grushin-kernel-Maucheri-hormander-cond2-L0-N0}, one can get $L^{\infty}$-weighted Plancherel estimates. 

\begin{lemma} \label{lem:General-Sup-kernel-estimate}
Suppose $\Theta$ satisfies the conditions \eqref{def:grushin-kernel-Maucheri-hormander-cond1-L0-N0} and \eqref{def:grushin-kernel-Maucheri-hormander-cond2-L0-N0} for some $L_0 \in \mathbb{N}$ and $N_0 = 0$. Then for all $L\leq \frac{L_0}{2}$, we have the following estimate
\begin{align} \label{General-Sup-estimate} 
d(x,y)^{2L} |\Theta_{j} (x,y)| \lesssim_{L_0} 2^{-jL} \left| B(x, 2^{-j/2}) \right|^{-1/2} \left| B(y, 2^{-j/2}) \right|^{-1/2}. 
\end{align} 
\end{lemma}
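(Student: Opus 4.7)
The plan is to bootstrap the $L^2$-weighted Plancherel estimate from Lemma \ref{lem:General-Kernel-estimate} to an $L^\infty$-weighted pointwise bound by a standard factorization trick combined with Cauchy--Schwarz. Choose an auxiliary smooth cutoff $\tilde{\psi}_j$ of the same dyadic form as $\psi_j$ (supported in a slightly enlarged dyadic band) such that $\tilde{\psi}_j \, \psi_j = \psi_j$. Setting $\tilde{S}_j := \tilde{\psi}_j(G)$, one has $\tilde{S}_j \, S_j = S_j$, and therefore one may write
\begin{align*}
\Theta_j \;=\; \Theta \circ S_j \;=\; \bigl(\Theta \circ \tilde{S}_j\bigr) \circ S_j \;=:\; K_j \circ S_j,
\end{align*}
which at the level of kernels reads $\Theta_j(x,y) = \int_{\mathbb{R}^{n_1+n_2}} K_j(x,z)\, S_j(z,y)\, dz$. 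Since $\tilde{\psi}_j$ satisfies the same structural properties as $\psi_j$, the operator $K_j = \Theta \circ \tilde{S}_j$ fulfils the exact analogue of the $L^2$-weighted Plancherel bound \eqref{General-square-estimate} (see also Remark \ref{choice:partition-of-unity}).

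Next, I would use the quasi-triangle inequality for the Grushin metric in the form $d(x,y)^{2L} \lesssim d(x,z)^{2L} + d(z,y)^{2L}$, so that
\begin{align*}
d(x,y)^{2L}|\Theta_j(x,y)| \;\lesssim\; \int d(x,z)^{2L}|K_j(x,z)|\,|S_j(z,y)|\,dz \;+\; \int |K_j(x,z)|\,d(z,y)^{2L}|S_j(z,y)|\,dz.
\end{align*}
For the first integral, Cauchy--Schwarz gives the upper bound
\begin{align*}
\left(\int d(x,z)^{4L}|K_j(x,z)|^2\,dz\right)^{1/2}\!\left(\int |S_j(z,y)|^2\,dz\right)^{1/2} \lesssim\; 2^{-jL}|B(x,2^{-j/2})|^{-1/2} |B(y,2^{-j/2})|^{-1/2},
\end{align*}
where the first factor is controlled by Lemma \ref{lem:General-Kernel-estimate} (applied to $K_j$) and the second by the standard unweighted Plancherel bound for the Grushin multiplier $S_j = \psi_j(G)$, namely $|B(y,2^{-j/2})|\int |S_j(z,y)|^2\,dz \lesssim 1$, which is classical and established in \cite{MartiniSikoraGrushinMRL, MartiniMullerGrushinRevistaMath, Bagchi-Garg-1, BBGG-1}. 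Symmetrically, for the second integral, Cauchy--Schwarz together with Lemma \ref{lem:General-Kernel-estimate} at $L = 0$ (for $K_j$) and the weighted Plancherel estimate
\begin{align*}
|B(y,2^{-j/2})|\int d(z,y)^{4L}|S_j(z,y)|^2\,dz \;\lesssim\; 2^{-2jL}
\end{align*}
for the Grushin spectral multiplier $S_j$ (again available from \cite{Bagchi-Garg-1, BBGG-1} for $L \leq L_0/2$) yields the same bound. Adding the two estimates gives precisely \eqref{General-Sup-estimate}.

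The main routine point to verify carefully is the availability of the weighted Plancherel estimate for $S_j$ up to the range $L \leq L_0/2$, but since $\psi_j$ is a smooth compactly supported dyadic cutoff, this is a by-now classical estimate for Grushin spectral multipliers and does not impose any additional assumption beyond what is already granted by the hypotheses. No genuinely new difficulty arises: the factorization reduces the $L^\infty$ bound to the $L^2$ estimate of Lemma \ref{lem:General-Kernel-estimate} combined with well-known Plancherel-type kernel estimates for $\psi_j(G)$.
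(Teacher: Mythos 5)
Your argument is correct, and it takes a genuinely different route from the paper. The paper proves the pointwise bound directly: it re-runs the machinery of Lemma \ref{lem:General-Kernel-estimate-lambda-singularity}, distributing the weight $d(x,y)^{2L}$ between the two factors $\Theta(x'',\lambda)$ and $S_j(\lambda)$ at the operator level via the commutators $D^{\alpha}$ and the $\lambda$-derivatives, inserting the projections $\chi_{j'}(\lambda)$ (Lemma \ref{lem:finite-shifts}), and then applying Cauchy--Schwarz pointwise in the intermediate variable (see \eqref{kernel-decom-2-Cauchy-Schwarz}); the $\Theta$-half is controlled by \eqref{def:grushin-kernel-Maucheri-hormander-cond1-L0-N0}--\eqref{def:grushin-kernel-Maucheri-hormander-cond2-L0-N0}, and the $S_j$-half by the explicit Hermite expansion \eqref{kernel-express-final}, as in \eqref{plancherel-Sj}. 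You instead bootstrap: factor $\Theta_j = K_j\circ S_j$ through an enlarged cutoff, split the weight by the quasi-triangle inequality for $d$, and invoke the already-proved $L^2$ estimate \eqref{General-square-estimate} together with the (un)weighted Plancherel bounds for $\psi_j(G)$; since the hypotheses and the range $L\le L_0/2$, $N_0=0$ coincide with those of Lemma \ref{lem:General-Kernel-estimate}, nothing is lost, and the Plancherel estimates for a smooth dyadic spectral cutoff are indeed classical (they also follow from the paper's own Lemma \ref{weighted-kernel-estimate-3}-based computation culminating in \eqref{plancherel-Sj}). Two small points you should make explicit: (i) Lemma \ref{lem:General-Kernel-estimate} is stated for the fixed partition $\{\psi_j\}$, so applying it to $K_j=\Theta\circ\tilde{\psi}_j(G)$ needs the (immediate) remark that its proof only uses the dyadic support and derivative bounds of the cutoff; you can even avoid this by writing $\Theta_j=\Theta_j\circ\tilde{S}_j$ (using $\psi_j\tilde{\psi}_j=\psi_j$), which lets you cite Lemma \ref{lem:General-Kernel-estimate} verbatim and places the auxiliary cutoff only in the standard $\tilde{S}_j$-estimates; (ii) the kernel-composition formula and the Fubini step are routine but worth a line, since $K_j(x,\cdot)$ and $S_j(\cdot,y)$ are square integrable while $S_j$ itself is not Hilbert--Schmidt. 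What your route buys is brevity: no $q$-truncation, no repetition of the region decomposition, and a transparent reduction of the $L^\infty$ bound to the $L^2$ bound plus standard spectral-multiplier estimates. What the paper's route buys is self-containedness, and its intermediate estimates (notably \eqref{plancherel-Sj}) are recycled for the gradient bounds in Lemmas \ref{General-Sup-kernel-grad-estimate-x-variable} and \ref{General-Sup-kernel-grad-estimate-y-variable}, where one factor carries a derivative and the bookkeeping is done once and for all.
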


Following arguments given in the previous subsection (see the explanation on how Lemma \ref{lem:General-Kernel-estimate} was deduced from Lemma \ref{lem:General-Kernel-estimate-lambda-singularity}), Lemma \ref{lem:General-Sup-kernel-estimate} is an immediate consequence of the following result. 

\begin{lemma} \label{lem:General-Sup-kernel-estimate-lambda-singularity}
Suppose $\Theta$ satisfies the conditions \eqref{def:grushin-kernel-Maucheri-hormander-cond1-L0-N0} and \eqref{def:grushin-kernel-Maucheri-hormander-cond2-L0-N0} for some $L_0 \in \mathbb{N}$ and $N_0 = 0$. Then for all $L\leq \frac{L_0}{2}$ and $q \in \mathbb{N}$, we have 
\begin{align} \label{General-Sup-estimate-singularity-q} 
d(x,y)^{2L} |\Theta_{j}^{q}(x,y)| \lesssim_{L_0} 2^{-jL} \left| B(x, 2^{-j/2}) \right|^{-1/2} \left| B(y, 2^{-j/2}) \right|^{-1/2}. 
\end{align} 
\end{lemma}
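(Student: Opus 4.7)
The strategy is to derive the pointwise $L^{\infty}$-estimate \eqref{General-Sup-estimate-singularity-q} from the $L^{2}$-weighted Plancherel estimate already in hand (Lemma \ref{lem:General-Kernel-estimate-lambda-singularity}) via a factorisation and a Cauchy--Schwarz argument, using the standard weighted Plancherel estimate for Grushin spectral multipliers as a companion input.

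The first step is to produce a useful factorisation. Pick an auxiliary cut-off $\widetilde{\psi}_{j}\in C_c^{\infty}$ of the same dyadic scale as $\psi_{j}$ with $\widetilde{\psi}_{j}\psi_{j}=\psi_{j}$. Since $S_{j}=\psi_{j}(G)$ has the $\lambda$-fibred representation $\psi_{j}(H(\lambda))$, the identity $\psi_{j}(H(\lambda))=\widetilde{\psi}_{j}(H(\lambda))\,\psi_{j}(H(\lambda))$ yields the operator identity $\Theta_{j}^{q}=\Theta_{j}^{q}\circ\widetilde{\psi}_{j}(G)$, and consequently the kernel identity
\begin{equation*}
\Theta_{j}^{q}(x,y)=\int_{\mathbb{R}^{n_1+n_2}}\Theta_{j}^{q}(x,z)\,\widetilde{\psi}_{j}(G)(z,y)\,dz.
\end{equation*}

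The second step is to insert the weight $d(x,y)^{2L}$ and use the triangle inequality $d(x,y)^{2L}\lesssim d(x,z)^{2L}+d(z,y)^{2L}$ (valid up to a quasi-metric constant), so that
\begin{equation*}
d(x,y)^{2L}|\Theta_{j}^{q}(x,y)|\lesssim\int d(x,z)^{2L}|\Theta_{j}^{q}(x,z)|\,|\widetilde{\psi}_{j}(G)(z,y)|\,dz+\int|\Theta_{j}^{q}(x,z)|\,d(z,y)^{2L}|\widetilde{\psi}_{j}(G)(z,y)|\,dz.
\end{equation*}
For each piece, Cauchy--Schwarz splits the weight into a $\Theta_{j}^{q}$ factor and a $\widetilde{\psi}_{j}(G)$ factor. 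The $\Theta_{j}^{q}$ factors are controlled by Lemma \ref{lem:General-Kernel-estimate-lambda-singularity} (with exponents $L$ and $0$, respectively), producing $2^{-jL}|B(x,2^{-j/2})|^{-1/2}$ and $|B(x,2^{-j/2})|^{-1/2}$. The $\widetilde{\psi}_{j}(G)$ factors are controlled by the classical weighted Plancherel estimate for smooth dyadic spectral multipliers of $G$ in the form
\begin{equation*}
|B(y,2^{-j/2})|\int_{\mathbb{R}^{n_1+n_2}}d(z,y)^{4L}|\widetilde{\psi}_{j}(G)(z,y)|^{2}\,dz\lesssim 2^{-2jL},
\end{equation*}
which is the Grushin weighted-Plancherel theorem of \cite{MartiniSikoraGrushinMRL,DuongOuhabazSikoraWeightedPlancherel2002JFA} (also a special case of the general bound underlying Section 4 of \cite{Bagchi-Garg-1}). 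Multiplying the two factors gives the required bound $2^{-jL}|B(x,2^{-j/2})|^{-1/2}|B(y,2^{-j/2})|^{-1/2}$ from each piece, proving \eqref{General-Sup-estimate-singularity-q}. The pointwise estimate \eqref{General-Sup-estimate} of Lemma \ref{lem:General-Sup-kernel-estimate} then follows by $q\to\infty$ exactly as in the deduction of Lemma \ref{lem:General-Kernel-estimate} from Lemma \ref{lem:General-Kernel-estimate-lambda-singularity}.

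The argument is short and the only subtle point is choosing the factorisation so as not to spoil the $q$-uniformity: because $\widetilde{\psi}_{j}(G)$ is free of any $\Upsilon^{q}$-cut-off, its weighted Plancherel estimate is uniform in $q$, while the $q$-dependent cut-off remains absorbed in $\Theta_{j}^{q}$ where the uniform bound of Lemma \ref{lem:General-Kernel-estimate-lambda-singularity} is already available. No extra number of derivatives on $\Theta$ is required beyond that used in Lemma \ref{lem:General-Kernel-estimate-lambda-singularity}, which is why the hypothesis $L\leq L_{0}/2$, $N_{0}=0$ is inherited verbatim.
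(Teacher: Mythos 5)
Your argument is correct, and it reaches \eqref{General-Sup-estimate-singularity-q} by a genuinely different route than the paper. The paper does not factor through an auxiliary cut-off: it redoes the whole region-by-region analysis of Lemma \ref{lem:General-Kernel-estimate-lambda-singularity} for the composition $\Theta^{q}(x'',\lambda)\,S_j(\lambda)$ (keeping $\Upsilon^q$ attached to $\Theta$ rather than peeling it off), decomposes the kernel as in \eqref{kernel-decom-2} over $\mathfrak{E}_j(x')\cap\mathcal{D}_k^{\pm}$ and $\mathfrak{E}_j(x')^c$, applies Cauchy--Schwarz in the intermediate variable inside each region, and then estimates the $\Theta$-factor by conditions \eqref{def:grushin-kernel-Maucheri-hormander-cond1-L0-N0}--\eqref{def:grushin-kernel-Maucheri-hormander-cond2-L0-N0} and the $S_j$-factor by a hand-made weighted Plancherel bound obtained from the Hermite-expansion machinery of Lemma \ref{weighted-kernel-estimate-3} (see \eqref{kernel-express-final}--\eqref{plancherel-Sj}). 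You instead insert a fattened dyadic piece $\widetilde{\psi}_j(G)$ with $\widetilde{\psi}_j\psi_j=\psi_j$ (legitimate, since $\Upsilon^q(\lambda)$ is scalar and commutes through, so $\Theta_j^q=\Theta_j^q\circ\widetilde{\psi}_j(G)$ fiberwise in $\lambda$), split the weight by the quasi-triangle inequality, and after Cauchy--Schwarz reduce everything to (i) the already-proved estimate of Lemma \ref{lem:General-Kernel-estimate-lambda-singularity} with exponents $L$ and $0$, both admissible under $L\le L_0/2$, $N_0=0$, and (ii) the standard weighted Plancherel estimate $|B(y,2^{-j/2})|\int d(z,y)^{4L}|\widetilde{\psi}_j(G)(z,y)|^2\,dz\lesssim 2^{-2jL}$, which is independent of $\Theta$ and of $q$ and follows from Gaussian heat-kernel bounds and finite propagation speed as in \cite{DuongOuhabazSikoraWeightedPlancherel2002JFA} (or from the paper's own Lemma \ref{weighted-kernel-estimate-3} machinery applied to the spectral multiplier $\psi_j$, i.e.\ condition \eqref{cond:General-hypo} for $S_j$); the numerology matches exactly, since $d^{4L}$ is again a multiple of $4$ and all the weight landing on the $\Theta$-factor is at most $4L\le 2L_0$. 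What your approach buys is brevity and reuse: the hard region analysis is done once (in the $L^2$ lemma) instead of twice, at the cost of two routine points you should state explicitly -- that the kernel composition identity $\Theta_j^q(x,y)=\int\Theta_j^q(x,z)\widetilde{\psi}_j(G)(z,y)\,dz$ holds a.e.\ (justified fiberwise in $\lambda$ via Plancherel in $z''$, exactly as the paper manipulates products of kernels), and that the self-adjointness/symmetry of $\widetilde{\psi}_j(G)$ lets you integrate its kernel in the first variable with the ball centered at $y$.
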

\begin{proof} 
We repeat the proof of Lemma \ref{lem:General-Kernel-estimate-lambda-singularity}, and break the estimate into two regions $I$ and $II$ given by 
\eqref{eq:first-break-lem:General-Kernel-estimate-lambda-singularity}. The first notable difference from the proof of Lemma \ref{lem:General-Kernel-estimate-lambda-singularity} is that we do not write $\Theta_{j}^q(x'', \lambda)$ as $\Upsilon^{q}(\lambda) \Theta_{j}(x'', \lambda)$. Instead, we stick to $\Theta_{j}^q(x'', \lambda)$ itself. Therefore, for term $I$, we need to analyse operators of the following form: 
$$ \left( D^{\alpha_1} \partial_{\lambda}^{\vartheta_1} \Theta^{q}(x'', \lambda) \right) \left( D^{\alpha_2} \partial_{\lambda}^{\vartheta_2}S_{j}(\lambda) \right), $$
where $|\alpha_1 + \alpha_2| = 2 L_1$ and $|\vartheta_1 + \vartheta_2| = 2 L_2$. 

As earlier, we take one such term and decompose it as follows: 
\begin{align} \label{est:Mauceri-type-lambda-singularity-decompose-pointwise1}
& \left( D^{\alpha_1} \partial_{\lambda}^{\vartheta_1}\Theta^{q}(x'', \lambda) \right) \left( D^{\alpha_2} \partial_{\lambda}^{\vartheta_2}S_{j}(\lambda) \right) \\ 
\nonumber & \quad = \sum_{j^{\prime}=0}^{\infty} \left(|\lambda|^{-|\vartheta_2|} D^{\alpha_1} \partial_{\lambda}^{\vartheta_1} \Theta^{q}(x'', \lambda) \right) \chi_{j^{\prime}}(\lambda) \left( |\lambda|^{|\vartheta_2|} D^{\alpha_2} \partial_{\lambda}^{\vartheta_2}S_{j}(\lambda) \right), 
\end{align}
and by Lemma \ref{lem:finite-shifts} we know that the above sum will survive only when $|j-j'| \leq C L$ for some constant $C > 0$ which depends only on $n_1$ and $n_2$. 

Note also that $ Op \left( \left( \partial_{\lambda}^{\tilde{\vartheta}} \Upsilon^{q}(\lambda) \right) \chi_{j^{\prime}} (\lambda) \right)$ is a Hilbert-Schmidt operator on $L^2 \left( \mathbb{R}^{n_1 + n_2} \right)$ for any $q \in \mathbb{N}$ and $\tilde{\vartheta} \in \mathbb{N}^{n_2}$. Now, with $\mathcal{D}_k^+$ and $\mathcal{D}_k^-$ as in \eqref{def:x-prime-sets-Mauceri-calc-1}, we initiate the analysis of term $I$ as in the proof of Lemma \ref{lem:General-Kernel-estimate-lambda-singularity}.

It suffices to estimate terms of the following form
\begin{align} \label{kernel-decom-2}
&\frac{{\mathbbm{1}}_{j,x^{\prime}}(y^{\prime})}{(|x'|+|y'|)^{2 L_2}} \int_{\mathbb{R}^{n_1+n_2}} Op \left( \left( |\lambda|^{-|\vartheta_2|} D^{\alpha_1} \partial_{\lambda}^{\vartheta_1}\Theta^{q}(x'', \lambda) \right) \chi_{j^{\prime}}(\lambda) \right) (x, z) \\
\nonumber & \quad \quad \quad \quad \quad \quad \quad \quad \quad \quad Op \left( |\lambda|^{|\vartheta_2|} D^{\alpha_2} \partial_{\lambda}^{\vartheta_2} S_{j}(\lambda) \right) (z, y) \, dz \\ 
\nonumber & \quad = : \sum_{k=1}^{n_1} \mathcal{I}^{1}_{k,+} (x,y) + \sum_{k=1}^{n_1} \mathcal{I}^{1}_{k,-} (x,y) + \mathcal{I}^{2} (x,y), 
\end{align}
where 
\begin{align*} 
\mathcal{I}^{1}_{k,+} (x,y) & = \frac{{\mathbbm{1}}_{j,x^{\prime}}(y^{\prime})}{(|x'|+|y'|)^{2 L_2}} \int_{\mathbb{R}^{n_2}} \int_{\mathfrak{E}_{j} (x') \cap \mathcal{D}_k^+} \frac{ Op \left( \left( |\lambda|^{-|\vartheta_2|} D^{\alpha_1} \partial_{\lambda}^{\vartheta_1} \Theta^{q}(x'', \lambda) \right) \chi_{j^{\prime}}(\lambda) \right) (x, z)}{(|x'|+z_k')^{2 L_2}} \\ 
& \quad \quad \quad \quad \quad \quad \quad \quad \quad \quad \quad \quad  (|x'|+z_k')^{2 L_2} Op \left( |\lambda|^{|\vartheta_2|} D^{\alpha_2} \partial_{\lambda}^{\vartheta_2}S_{j}(\lambda) \right) (z, y) \, dz' \, dz'', \\ 
\mathcal{I}^{1}_{k,-} (x,y) & = \frac{{\mathbbm{1}}_{j,x^{\prime}}(y^{\prime})}{(|x'|+|y'|)^{2 L_2}} \int_{\mathbb{R}^{n_2}} \int_{\mathfrak{E}_{j} (x') \cap \mathcal{D}_k^-} \frac{Op \left( \left( |\lambda|^{-|\vartheta_2|} D^{\alpha_1} \partial_{\lambda}^{\vartheta_1}\Theta^{q}(x'', \lambda) \right) \chi_{j^{\prime} }(\lambda) \right) (x, z)}{(|x'|-z_k')^{2 L_2}} \\ 
& \quad \quad \quad \quad \quad \quad \quad \quad \quad \quad \quad \quad  (|x'|-z_k')^{2 L_2} Op \left( |\lambda|^{|\vartheta_2|} D^{\alpha_2} \partial_{\lambda}^{\vartheta_2}S_{j}(\lambda) \right) (z, y) \, dz' \, dz'', \\ 
\mathcal{I}^{2} (x,y) & =  \frac{{\mathbbm{1}}_{j,x^{\prime}}(y^{\prime})}{(|x'|+|y'|)^{2 L_2}} \int_{\mathbb{R}^{n_2}} \int_{\mathfrak{E}_{j} (x')^c} Op \left( \left( |\lambda|^{-|\vartheta_2|} D^{\alpha_1}(\partial_{\lambda}^{\vartheta_1}\Theta^{q}(x'', \lambda) \right) \chi_{j^{\prime}}(\lambda) \right) (x, z) \\ 
& \quad \quad \quad \quad \quad \quad \quad \quad \quad \quad \quad \quad Op \left( |\lambda|^{|\vartheta_2|} D^{\alpha_2} \partial_{\lambda}^{\vartheta_2}S_{j}(\lambda) \right) (z, y) \, dz' \, dz''.
\end{align*} 

\medskip \textbf{\underline{Analysis of the term $\mathcal{I}^{1}_{k,+} (x,y) $ in \eqref{kernel-decom-2}}:} By Cauchy-Schwartz inequality  
\begin{align} \label{kernel-decom-2-Cauchy-Schwarz}
& \left| \mathcal{I}^{1}_{k,+} (x,y) \right| \\ 
\nonumber & \quad \leq \left( \int_{\mathbb{R}^{n_2}} \int_{\mathfrak{E}_{j} (x') \cap \mathcal{D}_k^+} \left| \frac{ Op \left( \left( |\lambda|^{-|\vartheta_2|} D^{\alpha_1} \partial_{\lambda}^{\vartheta_1}\Theta^{q}(x'', \lambda) \right) \chi_{j^{\prime}}(\lambda) \right) (x, z) }{(|x'|+z_k')^{2 L_2}}\right|^2 dz' \, dz'' \right)^{1/2} \\
\nonumber & \quad \quad \times \left( \int_{\mathbb{R}^{n_2}} \int_{\mathfrak{E}_{j} (x') \cap \mathcal{D}_k^+}  \left| \frac{(|x'|+z_k')^{2 L_2}}{(|x'|+|y'|)^{ 2L_2}} Op \left( |\lambda|^{|\vartheta_2|} D^{\alpha_2} \partial_{\lambda}^{\vartheta_2}S_{j}(\lambda) \right) (z, y) \right|^2 \, dz' \, dz'' \right)^{1/2}. 
\end{align}
 
It is at this stage that in the first term on the right hand side of the above inequality we perform analysis analogous to the one done in \eqref{est:Mauceri-type-lambda-singularity}, namely, we apply the Euclidean Plancherel theorem in $z''$-variable, and apply Leibniz's differentiation rule in $\lambda$-derivative on $\Upsilon^{q}(\lambda) \Theta (x'', \lambda)$, to get 
\begin{align}
& \int_{\mathbb{R}^{n_2}} \int_{\mathfrak{E}_{j} (x') \cap \mathcal{D}_k^+} \left| \frac{ |\lambda|^{-|\vartheta_2|} Op \left( \left( D^{\alpha_1} \partial_{\lambda}^{\vartheta_1} \Theta^{q}(x'', \lambda) \right) \chi_{j^{\prime}}(\lambda) \right) (x, z)}{(|x'|+z_k')^{2 L_2}} \right|^2 dz' \, dz'' \\ 
\nonumber & \lesssim \sum_{\vartheta_3 + \vartheta_4 = \vartheta_1}  \int_{\mathfrak{E}_{j} (x') } \int_{\mathbb{R}^{n_2}} \left| \frac{ |\lambda|^ {-|\vartheta_2|} \left( \partial^{\vartheta_3}_{\lambda}  \Upsilon^{q}(\lambda) \right) \left( \left( D^{\alpha_1} \partial_{\lambda}^{\vartheta_4 } \Theta (x'', \lambda) \right) \chi_{j^{\prime}}(\lambda) \right) (x^{\prime}, z^{\prime})}{(|x'|+|z'|)^{2 L_2}}\right|^2 d\lambda \, dz^{\prime} \\ 
\nonumber & \lesssim \sum_{\vartheta_3 + \vartheta_4 = \vartheta_1}  \int_{\mathfrak{E}_{j} (x')} \int_{\mathbb{R}^{n_2}} \left| \frac{ |\lambda|^ {-|\vartheta_2 + \vartheta_3|} \left( \left( D^{\alpha_1} \partial_{\lambda}^{\vartheta_4 } \Theta (x'', \lambda) \right) \chi_{j^{\prime}}(\lambda) \right) (x^{\prime}, z^{\prime})}{(|x'|+|z'|)^{2 L_2}}\right|^2  d\lambda \, dz^{\prime}. 
\end{align}
Then, as earlier, since $D^{\alpha_1} \partial_{\lambda}^{\vartheta_4} \Theta(x'', \lambda)$ can be written as a finite linear combination of operators of the form $|\lambda|^{-\frac{|\gamma_1 + \gamma_2|}{2}} \delta^{\gamma_1}(\lambda) \bar{\delta}^{\gamma_2}(\lambda) \partial_{\lambda}^{\vartheta_4} \Theta(x'', \lambda)$ with $\gamma_1 + \gamma_2 = \alpha_1$, it suffices to estimate 
$$ \int_{\mathfrak{E}_{j} (x') } \int_{\mathbb{R}^{n_2}} \left| \frac{ \left( |\lambda|^{-(|\vartheta_2 + \vartheta_3| + \frac{|\gamma_1 + \gamma_2|}{2})} \left( \delta^{\gamma_1}(\lambda) \bar{\delta}^{\gamma_2}(\lambda) \partial_{\lambda}^{\vartheta_4} \Theta(x'', \lambda) \right) \chi_{j^{\prime}}(\lambda) \right) (x^{\prime}, z^{\prime})}{(|x'|+|z'|)^{2 L_2}}\right|^2  d\lambda \, dz^{\prime}, $$ 
and therefore using condition \eqref{def:grushin-kernel-Maucheri-hormander-cond1-L0-N0}, we get 
\begin{align}\label{Given-condition}
& |B(x,2^{-j/2})| \int_{\mathfrak{E}_{j} (x')} \int_{\mathbb{R}^{n_2}} \left| \frac{\left( |\lambda|^{-(|\vartheta_2 + \vartheta_3|)} \left( D^{\alpha_1} \partial_{\lambda}^{\vartheta_4} \Theta(x'', \lambda) \right) \chi_{j^{\prime}}(\lambda) \right) (x^{\prime}, z^{\prime})}{(|x'|+|z'|)^{2 L_2}} \right|^2 d\lambda \, dz^{\prime} \\ 
\nonumber & \quad \lesssim 2^{-j(|\gamma_1 + \gamma_2| + 2 L_2)} = 2^{-j(|\alpha_1| + 2 L_2)}. 
\end{align}

On the other hand, instead of the operator norm of the type $A^{1, x}_{k, +}$ (as in \eqref{Op-norm-1}), this time we have to compute the weighted Plancherel estimate for the second term on the right hand side of the inequality \eqref{kernel-decom-2-Cauchy-Schwarz}. For the same, note that we have already seen in the proof of Lemma \ref{lem:General-Kernel-estimate-lambda-singularity} that ${\mathbbm{1}}_{j,x^{\prime}}(y^{\prime}) \frac{(|x'|+z_k')^{2 L_2}}{(|x'|+|y'|)^{2 L_2}} Op \left( |\lambda|^{|\vartheta_2|} D^{\alpha_2} \partial_{\lambda}^{\vartheta_2} S_{j}(\lambda) \right) (z, y)$ can be written as a finite linear combination of terms of the following form
\begin{align}\label{kernel-express-final}
& {\mathbbm{1}}_{j,x^{\prime}}(y^{\prime}) \frac{|x^{\prime}|^{b_1} {y^{\prime}_{k}}^{b_3}}{(|x'|+|y'|)^{2 L_2}} \int\limits_{[0,1]^{N_1} \times \Omega^{N_2} \times [0,1]^{|\tilde{\nu}|}} \int_{\mathbb{R}^{n_2}} e^{-i \lambda \cdot (z''-y'')} \left( |\lambda| {y^{\prime}} \right)^{\alpha_3} |\lambda|^{|\vartheta_2| - |\alpha_3|} \, \mathfrak{A}_l(\lambda) \\ 
\nonumber & \quad \sum_{\mu} C_{\mu, \vec{c}} \left(\tau^{\frac{1}{2} \theta_1} \partial_\tau^{\theta_2} \psi_{j}\right) ((2\mu + \tilde{1} + \vec{c}(\omega)) |\lambda|) \Phi_{\mu}^{\lambda}(z^{\prime}) \Phi_{\mu + \tilde{\mu}}^{\lambda}(y^{\prime}) g(\omega) \, d\lambda \, d \omega, 
\end{align}
where $N_1, N_2, | \alpha_3 | \leq |\vartheta_2| - l$, $ |\mathcal{\theta}_1| \leq |\mathcal{\theta}_2| \leq |\tilde{\nu}| + 2 |\vartheta_2| - 2 l$, $|\mathcal{\theta}_2| - \frac{|\mathcal{\theta}_1|}{2} = \frac{|\tilde{\nu}|}{2} + |\vartheta_2| - l - \frac{|\alpha_3|}{2} \geq (|\tilde{\nu}| + |\vartheta_2| - l) / 2$, and $\mathfrak{A}_l$ is a continuous function on $\mathbb{R}^{n_2} \setminus \{0\}$ which is homogeneous of degree $-l$. Also, $b_1$, $b_3$ and $\tilde{\nu}$ should be understood as in \eqref{Reduced-kernel-expression-T-1}. 

Using the expression of \eqref{kernel-express-final}, we get by the Plancherel theorem in $z''$-variable 
\begin{align}\label{plancherel-Sj}
& |B(y, 2^{-j/2})| \int_{\mathbb{R}^{n_2}} \int_{\mathfrak{E}_{j} (x') \cap \mathcal{D}_k^+} \left| {\mathbbm{1}}_{j,x^{\prime}}(y^{\prime}) \frac{(|x'|+z_k')^{2 L_2}}{(|x'|+|y'|)^{ 2 L_2}} Op \left( |\lambda|^{|\vartheta_2|} D^{\alpha_2}  \partial_{\lambda}^{\vartheta_2} S_{j}(\lambda) \right) (z, y) \right|^2 dz' \, dz'' \\ 
\nonumber & \quad = |B(y, 2^{-j/2})| \int_{\mathbb{R}^{n_2}} \int_{\mathfrak{E}_{j} (x') \cap \mathcal{D}_k^+} \left| {\mathbbm{1}}_{j,x^{\prime}}(y^{\prime}) \frac{(|x'|+z_k')^{2 L_2}}{(|x'|+|y'|)^{ 2L_2}} \left( |\lambda|^{|\vartheta_2|} D^{\alpha_2}  \partial_{\lambda}^{\vartheta_2}S_{j}(\lambda) \right) (z', y') \right|^2 d\lambda \, dz \\ 
\nonumber & \quad \lesssim 2^{-j |\alpha_2|}. 
\end{align}

Finally, estimates \eqref{Given-condition} and \eqref{plancherel-Sj} together imply that 
\begin{align*}
\left| \mathcal{I}^{1}_{k,+} (x, y) \right| & \lesssim 2^{-j(|\alpha_1|+2L_2)/2} |B(x,2^{-j/2})|^{-1/2} 2^{-j|\alpha_2|/2} |B(y, 2^{-j/2})|^{-1/2}\\
& = 2^{-jL} |B(x,2^{-j/2})|^{-1/2} |B(y, 2^{-j/2})|^{-1/2}.
\end{align*}

The analysis of the terms $\mathcal{I}^{1}_{k,-} (x,y)$, $\mathcal{I}^{2} (x,y)$, and of the term $II$ could be done using similar ideas, and we leave those details. This completes the proof of Lemma \ref{lem:General-Sup-kernel-estimate-lambda-singularity}. 
\end{proof}

\begin{lemma} \label{General-Sup-kernel-grad-estimate-x-variable}
Suppose $\Theta$ satisfies the conditions \eqref{def:grushin-kernel-Maucheri-hormander-cond1-L0-N0} and \eqref{def:grushin-kernel-Maucheri-hormander-cond2-L0-N0} for some $L_0 \in \mathbb{N}$ and $N_0 = 1$. Then for all $L\leq \frac{L_0 - 1}{2}$, we have 
\begin{align*}
d(x,y)^{2L} |X_{x} \Theta_{j}(x,y)| & \lesssim_{L_0} 2^{-jL} 2^{j/2} \left| B(x, 2^{-j/2}) \right|^{-1/2} \left| B(y, 2^{-j/2}) \right|^{-1/2}.
\end{align*}
\end{lemma}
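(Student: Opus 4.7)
The plan is to deduce this $L^\infty$-weighted gradient estimate by combining the Cauchy--Schwarz decomposition used to prove Lemma~\ref{lem:General-Sup-kernel-estimate-lambda-singularity} with the $\lambda$-scaled commutator manipulations used in the proof of Lemma~\ref{lem:General-Kernel-grad-estimate-x-lambda-singularity}. As in the deduction of Lemma~\ref{lem:General-Sup-kernel-estimate} from its ``$\lambda$-regularised'' companion, I would first work with $\Theta_j^q$ in place of $\Theta_j$, establish the estimate uniformly in $q$, and pass to the limit $q \to \infty$ using convergence of the gradients of $\Theta_j^q$ to those of $\Theta_j$. Decomposing $X_x$ into the components $\partial/\partial x'_k$ and $x'_k \partial/\partial x''_l$, and splitting $d(x,y)^{2L}$ using $L_1+L_2=L$ together with the region decomposition from \eqref{eq:first-break-lem:General-Kernel-estimate-lambda-singularity-2}, it suffices to bound each resulting piece pointwise.

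For the $\partial/\partial x'_k$ component, I would use $\partial/\partial x'_k = \tfrac{1}{2}(A_k(\lambda) - A_k(\lambda)^*)$ together with the identity $A_k(\lambda) M(\lambda) = |\lambda|^{1/2}\delta_k(\lambda) M(\lambda) + M(\lambda) A_k(\lambda)$ (and its $A_k(\lambda)^*$ analogue) to reduce matters to two operator types: $|\lambda|^{1/2}\delta_k(\lambda) D^\alpha \partial_\lambda^{\vartheta_2} \Theta_j^q(x'',\lambda)$ and $(D^\alpha \partial_\lambda^{\vartheta_2} \Theta_j^q(x'',\lambda)) A_k(\lambda)$, exactly as in the proof of Lemma~\ref{lem:General-Kernel-grad-estimate-x-lambda-singularity}. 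Each is treated by the composition decomposition \eqref{est:Mauceri-type-lambda-singularity-decompose-pointwise1} (with only finitely many surviving indices $j'$ by Lemma~\ref{lem:finite-shifts}), subdivision of the $z'$-integration over $\mathfrak{E}_j(x') \cap \mathcal{D}_k^\pm$ and $\mathfrak{E}_j(x')^c$, and Cauchy--Schwarz as in \eqref{kernel-decom-2-Cauchy-Schwarz}. The ``$\Theta$-side'' factor is controlled by \eqref{def:grushin-kernel-Maucheri-hormander-cond1-L0-N0} (or \eqref{def:grushin-kernel-Maucheri-hormander-cond2-L0-N0}), giving the baseline rate $2^{-j(|\alpha_1|+2L_2)/2}|B(x,2^{-j/2})|^{-1/2}$ as in \eqref{Given-condition}. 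The ``$S_j$-side'' factor, handled by the Plancherel identity and Lemma~\ref{weighted-kernel-estimate-3} as in \eqref{plancherel-Sj}, now carries either an extra $|\lambda|^{1/2} \sim 2^{j/2}$ (case~(a)) or an extra $A_k(\lambda)\Phi_\mu^\lambda$ of magnitude $(2\mu_k|\lambda|)^{1/2} \sim 2^{j/2}$ (case~(b)); upon squaring and integrating, both contribute an extra factor of $2^j$ on the $L^2$-weighted side, hence exactly one extra $2^{j/2}$ after taking the square root in Cauchy--Schwarz.

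For the $x'_k \partial/\partial x''_l$ component, I would proceed as in the last part of the proof of Lemma~\ref{lem:General-Kernel-grad-estimate-x-lambda-singularity}: $\partial/\partial x''_l$ inside the $\lambda$-integral produces either a $\lambda_l$ factor or a $\partial_{x''_l}$ falling directly on $\Theta$, while $|\lambda| x'_k = \tfrac{1}{2}(A_k(\lambda) + A_k(\lambda)^*)$ converts $\lambda_l x'_k$ into the same two operator types as above. The Leibniz remainders $x'_k \partial_\lambda^{\vartheta - e_l}$ and $x'_k \partial_\lambda^\vartheta \partial_{x''_l}$ are absorbed respectively by retaining only $(|x'|+|y'|)^{2L_2-1}$ in the denominator of the corresponding $\mathcal{I}^{1}_{k,\pm}$-type terms (which shifts $L_2$ to $L_2 - \tfrac{1}{2}$ and hence costs a single $2^{j/2}$) and by invoking the hypothesis with $|\alpha_0|=|\beta_0|=1$, which is exactly why we require $N_0=1$. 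The main obstacle throughout is the careful redistribution of the weights $(|x'|+|y'|)^{-2L_2}$, the powers of $|\lambda|$ and the stray $x'_k$-factors across the two sides of Cauchy--Schwarz so as to ensure the various $2^{j/2}$ gains combine to yield exactly one extra $2^{j/2}$ (and no more) over the bound of Lemma~\ref{lem:General-Sup-kernel-estimate-lambda-singularity}; this bookkeeping is what dictates the loss of one extra derivative, i.e.\ $L \leq (L_0-1)/2$ instead of $L \leq L_0/2$.
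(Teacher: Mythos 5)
Your proposal follows exactly the route the paper intends: the paper omits the proof, stating only that one should combine the methodology of Lemma~\ref{lem:General-Sup-kernel-estimate-lambda-singularity} (the Cauchy--Schwarz pointwise scheme) with that of Lemma~\ref{lem:General-Kernel-grad-estimate-x-lambda-singularity} (the gradient manipulations via $A_k(\lambda)$, $\delta_k(\lambda)$ and the $N_0=1$ hypothesis), which is precisely what you do, including the correct accounting of the single extra $2^{j/2}$ and the use of $|\alpha_0|=|\beta_0|=1$ for the $x'_k\partial_{x''_l}$ component. The argument is correct and matches the paper's approach.
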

\begin{proof}
One can prove Lemma \ref{General-Sup-kernel-grad-estimate-x-variable} following the the methodology of proofs of Lemmas \ref{lem:General-Sup-kernel-estimate-lambda-singularity} and  \ref{lem:General-Kernel-grad-estimate-x-lambda-singularity} , so we omit it's proof. 
\end{proof}

Similar estimates could be proved for $y$-gradients with $N_0 = 0$ itself. More precisely, 
\begin{lemma} \label{General-Sup-kernel-grad-estimate-y-variable}
Suppose $\Theta$ satisfies the conditions \eqref{def:grushin-kernel-Maucheri-hormander-cond1-L0-N0} and \eqref{def:grushin-kernel-Maucheri-hormander-cond2-L0-N0} for some $L_0 \in \mathbb{N}$ and $N_0 = 0$. Then for all $L\leq \frac{L_0}{2}$, we have 
\begin{align*}
d(x,y)^{2L} |X_{y} \Theta_{j}(x,y)| & \lesssim_{L_0} 2^{-jL} 2^{j/2} \left| B(x, 2^{-j/2}) \right|^{-1/2} \left| B(y, 2^{-j/2}) \right|^{-1/2}. 
\end{align*}
\end{lemma}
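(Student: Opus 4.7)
The plan is to combine the $L^\infty$-weighted Plancherel methodology of Lemma \ref{lem:General-Sup-kernel-estimate-lambda-singularity} with the handling of $y$-gradients developed in the proof of Lemma \ref{lem:General-Kernel-grad-estimate-y-lambda-singularity}. As usual I would first establish the uniform-in-$q$ bound
$$ d(x,y)^{2L}\,|X_y \Theta_j^q(x,y)| \lesssim 2^{-jL} 2^{j/2}\, |B(x, 2^{-j/2})|^{-1/2}\,|B(y, 2^{-j/2})|^{-1/2} $$
for the regularised operator $\Theta_j^q$, and then pass to the limit $q \to \infty$ via Fatou/dominated convergence on the kernel, exactly as in the deduction of Lemma \ref{lem:General-Sup-kernel-estimate} from Lemma \ref{lem:General-Sup-kernel-estimate-lambda-singularity}.

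For the uniform estimate I would decompose $X_y$ into the two families $\partial_{y'_k}$ and $y'_k \partial_{y''_l}$. For a term of the form $\partial_{y'_k}\Theta_j^q(x,y)$, testing against $f$ and integrating by parts in $y'$ (as in the first display of the proof of Lemma \ref{lem:General-Kernel-grad-estimate-y-lambda-singularity}) converts $\partial_{y'_k}$ into the combination $\tfrac12 (A_k(\lambda)^* - A_k(\lambda))$ acting on the right of $\Theta(x'',\lambda)\circ S_j(\lambda)$. I would then follow the template of \eqref{kernel-decom-2}--\eqref{kernel-decom-2-Cauchy-Schwarz}: insert the Hermite projection $\chi_{j'}(\lambda)$ between $\Theta(x'',\lambda)$ and $S_j(\lambda)A_k(\lambda)^{(*)}$ (by Lemma \ref{lem:finite-shifts} only $O(L)$ values of $j'$ survive), apply pointwise Cauchy--Schwarz, and bound the $\Theta$-factor by hypothesis \eqref{def:grushin-kernel-Maucheri-hormander-cond1-L0-N0} with $N_0 = 0$. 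The second Cauchy--Schwarz factor is the weighted $L^2$-Plancherel norm of $|\lambda|^{|\vartheta_2|}D^{\alpha_2}\partial_\lambda^{\vartheta_2}S_j(\lambda)\circ A_k(\lambda)^{(*)}$. This is estimated exactly as in \eqref{plancherel-Sj} except that the identity $A_k(\lambda)\Phi_\mu^\lambda = \sqrt{2\mu_k|\lambda|}\,\Phi_{\mu - e_k}^\lambda$ (and its adjoint analogue) contributes one additional factor of $\sqrt{(2|\mu|+n_1)|\lambda|} \lesssim 2^{j/2}$ on the support of $\psi_j$. This single extra $2^{j/2}$ is the source of the gradient growth in the target bound.

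For the mixed derivative $y'_k\partial_{y''_l}\Theta_j^q(x,y)$, I proceed as in the second half of the proof of Lemma \ref{lem:General-Kernel-grad-estimate-y-lambda-singularity}. Applying the weight $|x''-y''|^{2L_2}$ inside the Fourier integral and differentiating via Leibniz yields three types of terms. Terms carrying a prefactor $y'_k\lambda_l$ rewrite as $\tfrac{\lambda_l}{2|\lambda|}(A_k(\lambda)+A_k(\lambda)^*)$ on the right of $\partial_\lambda^\vartheta\Theta^q(x'',\lambda)$ and are handled exactly as in the previous paragraph, again producing the $2^{j/2}$ from the creation/annihilation factor. Terms of the form $y'_k\partial_\lambda^{\vartheta-e_l}\Theta_j^q$ are treated by the splitting $y'_k = -(x'_k - y'_k) + x'_k$: the first piece is a $D^{e_k}$ non-commutative derivative on $\Theta$ which is absorbed into one of the $\delta^{\gamma_1}\bar{\delta}^{\gamma_2}$ factors in $\mathcal{L}(\lambda)_{\vartheta_1,\vartheta_2}^{\nu,\gamma}$ (trading one $\lambda$-derivative for one non-commutative derivative, which preserves the required decay), while the $x'_k$ piece is handled as the $|x'|$-part of $I^{1,\lambda,x''}_{k,+}$ in \eqref{kernel-decom-2} by carrying $(|x'|+|y'|)^{2L_2-1}$ in the denominator of the Cauchy--Schwarz split. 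Terms of the form $y'_k\partial_\lambda^\vartheta(\partial_{y''_l}\Theta^q)$ do not arise here since the $y''$-derivative was already absorbed into the $\lambda$-factor by integration by parts.

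The main obstacle, as in the $L^2$ case, is the bookkeeping for the $y'_k\partial_\lambda^{\vartheta-e_l}\Theta_j^q$ piece: one must verify that after splitting $y'_k = -(x'_k - y'_k) + x'_k$, the deficit of one $\lambda$-derivative (compared with the $\partial_{y'_k}$ case) is compensated either by the extra non-commutative derivative $D^{e_k}$ or by the reduced power $(|x'|+|y'|)^{2L_2-1}$ in the Cauchy--Schwarz denominator, leaving the final bound precisely of order $2^{-jL}2^{j/2}|B(x,2^{-j/2})|^{-1/2}|B(y,2^{-j/2})|^{-1/2}$ uniformly in $(x,y)$. All remaining computations are mechanical and run parallel to the corresponding estimates in Lemmas \ref{lem:General-Sup-kernel-estimate-lambda-singularity} and \ref{lem:General-Kernel-grad-estimate-y-lambda-singularity}.
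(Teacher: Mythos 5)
Your proposal is correct and follows exactly the route the paper intends: the paper omits this proof, stating only that it follows by combining the $L^\infty$-methodology of Lemma~\ref{lem:General-Sup-kernel-estimate-lambda-singularity} with the $y$-gradient manipulations of Lemma~\ref{lem:General-Kernel-grad-estimate-y-lambda-singularity}, which is precisely your plan (integration by parts converting $\partial_{y'_k}$ and $y'_k\lambda_l$ into right-composition with $A_k(\lambda)^{(*)}$ yielding the $2^{j/2}$, the splitting $y'_k=-(x'_k-y'_k)+x'_k$ for the $\partial_\lambda^{\vartheta-e_l}$ piece, and Cauchy--Schwarz against the $S_j$-factor as in \eqref{plancherel-Sj}). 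The only cosmetic point is that, since $\Theta(x'',\lambda)$ has no $y''$-dependence, the Leibniz expansion produces just the two types of terms (as in the paper), the third type being vacuous for the reason you note rather than by any integration by parts.
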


\subsection{Proofs of Theorems \ref{thm:Mauceri-kernel-a=0-weak-11}, \ref{thm:Mauceri-kernel-a=0} and \ref{thm:Mauceri-kernel-a=0-more-derivative}} \label{subsec:proofs-mauceri-type-theorems} 

In this subsection we explain how Theorems \ref{thm:Mauceri-kernel-a=0-weak-11}, \ref{thm:Mauceri-kernel-a=0} and \ref{thm:Mauceri-kernel-a=0-more-derivative} would follow from the weighted kernel estimates that we have established in the previous two subsections. But, before doing that, let us remark that here our assumptions on the choice of $L_0$ are $1$ or $2$ more than those required for results stated in Subsection \ref{subsec:results-for-Grsuhin-pseudo}. As also pointed out in Subsection \ref{subsec:intro-methodology-proofs}, the extra assumption is needed due to the method of our proof. For example, in order to apply estimate \eqref{General-square-estimate} of Lemma \ref{lem:General-Kernel-estimate}, we have to take $L \in \mathbb{N}$. This is indeed required as we have already seen that our proof of estimates of the type \eqref{General-square-estimate} is built on direct calculations pertaining to the action of the distance function $d(x,y)$, which forces us to work with multiples of 4 in powers of $d(x,y)$. Similar remark is applicable for other weighted kernel estimates too.

\begin{proof}[Proof of Theorem \ref{thm:Mauceri-kernel-a=0-weak-11}]
In view of Theorem \ref{thm:weak-type-bound-operator}, it suffices to prove that the operator $\Theta$ satisfies estimates \eqref{cond:General-hypo-sup} and \eqref{cond:General-hypo-y-grad-sup} for $R_0= Q + 2$. So we decompose the operator $\Theta$ as $\Theta = \sum_{j} \Theta_j$, where $\Theta_j$'s are defined in \eqref{def:operator-countable-break-smooth}. 

Now we have from Lemma \ref{lem:General-Sup-kernel-estimate} that for all natural numbers $L \leq (Q+2) / 2$, 
\begin{align*} 
d(x,y)^{2L} |\Theta_{j} (x,y)| \lesssim 2^{-jL} \left| B(x, 2^{-j/2}) \right|^{-1/2} \left| B(y, 2^{-j/2}) \right|^{-1/2}. 
\end{align*}
Therefore, by convexity, for all real numbers $0 \leq \mathfrak{r}\leq Q +2$, we get 
\begin{align*} 
d(x,y)^{\mathfrak{r}} |\Theta_{j} (x,y)| \lesssim 2^{-j\mathfrak{r}/2} \left| B(x, 2^{-j/2}) \right|^{-1/2} \left| B(y, 2^{-j/2}) \right|^{-1/2}, 
\end{align*}
which implies that condition \eqref{cond:General-hypo-sup} is satisfied for $R_0= Q + 2$. 

Similarly we get from Lemma \ref{General-Sup-kernel-grad-estimate-y-variable} that for all real numbers $0 \leq \mathfrak{r}\leq Q +2$, we have
\begin{align*}
d(x,y)^{\mathfrak{r}} |X_{y} \Theta_{j}(x,y)| & \lesssim 2^{-j\mathfrak{r}/2} 2^{j/2} \left| B(x, 2^{-j/2}) \right|^{-1/2} \left| B(y, 2^{-j/2}) \right|^{-1/2}. 
\end{align*}
which imply that conditions \eqref{cond:General-hypo-y-grad-sup} is satisfied for $R_0= Q + 2$. 

Hence, keeping Remark \ref{choice:partition-of-unity} in mind, Theorem \ref{thm:Mauceri-kernel-a=0-weak-11} follows from Theorem \ref{thm:weak-type-bound-operator}. 
\end{proof}

\begin{proof}[Proof of Theorem \ref{thm:Mauceri-kernel-a=0}]
We are given that conditions \eqref{def:grushin-kernel-Maucheri-hormander-cond1-L0-N0} and \eqref{def:grushin-kernel-Maucheri-hormander-cond2-L0-N0} hold for $L_0= \floor*{Q/2} + 3$ and $N_0$ =1, we can make use of Lemmas \ref{lem:General-Kernel-estimate} and \ref{lem:General-Kernel-grad-estimate-x} to get that for all real numbers $0 \leq \mathfrak{r}\leq \floor*{Q/2} +2$, 
\begin{align*}
|B(x, 2^{-j/2})| \int_{\mathbb{R}^{n_1 + n_2}} d(x,y)^{2\mathfrak{r}} |\Theta_{j}(x,y)|^2 \, dy & \lesssim 2^{-j\mathfrak{r}},\\
|B(x, 2^{-j/2})| \int_{\mathbb{R}^{n_1+n_2}} d(x,y)^{2\mathfrak{r}} |X_{x} \Theta_{j} (x,y)|^2 \, dy & \lesssim 2^{-j \mathfrak{r}}2^{j}.
\end{align*}
Thus, we see that the operator $\Theta$ satisfies conditions \eqref{cond:General-hypo} and \eqref{cond:General-hypo-grad} for $R_0= \floor*{Q/2} +2$. With this, Theorem \ref{thm:Mauceri-kernel-a=0} follows from Theorem \ref{thm:main-sparse} and Remark  \ref{choice:partition-of-unity}.
\end{proof}

\begin{proof}[Proof of Theorem \ref{thm:Mauceri-kernel-a=0-more-derivative}]
Since we have assumed conditions \eqref{def:grushin-kernel-Maucheri-hormander-cond1-L0-N0} and \eqref{def:grushin-kernel-Maucheri-hormander-cond2-L0-N0} for $L_0= Q + 3$ and $N_0$ =1, Lemmas \ref{lem:General-Sup-kernel-estimate},   \ref{General-Sup-kernel-grad-estimate-x-variable} and \ref{General-Sup-kernel-grad-estimate-y-variable} are applicable, and we get that for all real numbers $0 \leq \mathfrak{r}\leq Q +2$, 
\begin{align*}
d(x,y)^{\mathfrak{r}} |\Theta_{j} (x,y)| & \lesssim 2^{-j\mathfrak{r}/2} \left| B(x, 2^{-j/2}) \right|^{-1/2} \left| B(y, 2^{-j/2}) \right|^{-1/2}, \\
d(x,y)^{\mathfrak{r}} |X_{x} \Theta_{j}(x,y)| & \lesssim 2^{-j\mathfrak{r}/2} 2^{j/2} \left| B(x, 2^{-j/2}) \right|^{-1/2} \left| B(y, 2^{-j/2}) \right|^{-1/2},\\ 
d(x,y)^{\mathfrak{r}} |X_{y} \Theta_{j}(x,y)| & \lesssim 2^{-j\mathfrak{r}/2} 2^{j/2} \left| B(x, 2^{-j/2}) \right|^{-1/2} \left| B(y, 2^{-j/2}) \right|^{-1/2}.
\end{align*}
That is, $\Theta$ satisfies conditions \eqref{cond:General-hypo-sup}, \eqref{cond:General-hypo-y-grad-sup}, and \eqref{cond:General-hypo-grad-sup} for $R_0 = Q + 2$, and therefore Theorem \ref{thm:Mauceri-kernel-a=0-more-derivative} follows from Theorem \ref{thm:main-sparse-more-derivative} and Remark \ref{choice:partition-of-unity}.
\end{proof} 

\section{Proofs of results of Subsection \ref{subsec:main-results-joint-functional-calculus}} \label{Sec:Shifted-Grushin-pseudo-multipliers} 

In this section, we shall provide proofs of Theorems \ref{thm:joint-shift-multi-weak-type}, \ref{thm:joint-shift-multi-sparse-less-derivative} and \ref{thm:joint-shift-multi-sparse-more-derivative}. We shall actually show that the operator family $\Theta(x'', \lambda)$ given by \eqref{def:theta-lambda-example-with-shifts} satisfies the following conditions for appropriate values of $L_0$ and $N_0$: 
\newcommand{\KCKCaaREV}{
\begin{align} \label{def:grushin-kernel-Maucheri-hormander-cond1-L0-N0-for-symbols} 
& |B(x, 2^{-j/2})| \int_{\mathfrak{E}_{j} (x^{\prime})} \int_{\mathbb{R}^{n_2}}
\left| \frac{ \left( \left( \widetilde{\mathcal{L}(\lambda)}_{\vartheta_1, \vartheta_2}^{\nu} {x'}^{\alpha_0} \partial^{\beta_0}_{x''} \Theta(x'', \lambda) \right) \chi_{j}(\lambda)\right)(x^{\prime}, y^{\prime})}{(|x^{\prime}|+|y^{\prime}|)^{|\vartheta_1 + \vartheta_2|}} \right|^2 d\lambda \, dy^{\prime} \tag{$\widetilde{KC1}$-$L_0$-$N_0$} \\ 
\nonumber & \quad \lesssim_{L_0, N_0} 2^{j |\beta_0|} 2^{-j\left( |\nu| + |\vartheta_1 + \vartheta_2| \right)}, \\ 
\nonumber & \textup{for all } |\nu| + |\vartheta_1 + \vartheta_2| + |\beta_0| \leq L_0 \, \textup{ and } |\alpha_0|= |\beta_0| \leq N_0, \\ 
\label{def:grushin-kernel-Maucheri-hormander-cond2-L0-N0-for-symbols}
& |B(x, 2^{-j/2})| \int_{\mathfrak{E}_{j} (x^{\prime})^c} \int_{\mathbb{R}^{n_2}} \left| \left( \left( \widetilde{\mathcal{L}(\lambda)}_{\vartheta_1, \vartheta_2}^{\nu} {x'}^{\alpha_0} \partial^{\beta_0}_{x''} \Theta(x'', \lambda) \right)  \chi_{j}(\lambda) \right) (x^{\prime}, y^{\prime}) \right|^2 d\lambda \, dy' \tag{$\widetilde{KC2}$-$L_0$-$N_0$} \\ 
\nonumber & \quad \lesssim_{L_0, N_0} 2^{j |\beta_0|} 2^{-j\left( |\nu| + 2 |\vartheta_1 + \vartheta_2| \right)}, \\ 
\nonumber & \textup{for all } |\nu| + 2|\vartheta_1 + \vartheta_2| + |\beta_0| \leq L_0 \, \textup{ and } |\alpha_0|= |\beta_0| \leq N_0.
\end{align}
}

\begin{KCKCaaREV}
\end{KCKCaaREV}

where $\widetilde{\mathcal{L}(\lambda)}_{\vartheta_1, \vartheta_2}^{\nu} \Theta(x'', \lambda)$ is the differential operator given by 
$$ \widetilde{\mathcal{L}(\lambda)}_{\vartheta_1, \vartheta_2}^{\nu} \Theta(x'', \lambda) =  |\lambda|^{- |\vartheta_2|} D^{\nu} \partial_{\lambda}^{\vartheta_1} \Theta(x'', \lambda).$$ 

Let us explain the relation between conditions \eqref{def:grushin-kernel-Maucheri-hormander-cond1-L0-N0} and \eqref{def:grushin-kernel-Maucheri-hormander-cond1-L0-N0-for-symbols}, and conditions \eqref{def:grushin-kernel-Maucheri-hormander-cond2-L0-N0} and \eqref{def:grushin-kernel-Maucheri-hormander-cond2-L0-N0-for-symbols}, and the reason behind introducing these seemingly new conditions. 

Recall that $D_j$ denotes the commutator by the multiplication operator $x'_j$, that is, $D_j (M) = [x_j, M]$. It follows that $D_j = |\lambda|^{-1/2} \left( \delta_j (\lambda) + \bar{\delta}_j (\lambda) \right),$ and therefore $D^{\nu}$ can be expressed as a finite linear combination of operators of the form $|\lambda|^{-|\gamma_1 + \gamma_2|/2} \delta^{\gamma_1}(\lambda) \bar{\delta}^{\gamma_2}(\lambda)$ with $\gamma_1 + \gamma_2 = \nu$, implying that condition \eqref{def:grushin-kernel-Maucheri-hormander-cond1-L0-N0} imply those of the type \eqref{def:grushin-kernel-Maucheri-hormander-cond1-L0-N0-for-symbols} and condition \eqref{def:grushin-kernel-Maucheri-hormander-cond2-L0-N0} imply those of the type \eqref{def:grushin-kernel-Maucheri-hormander-cond2-L0-N0-for-symbols}. Also, a peek into the proofs of Section \ref{sec:mauceri-type-weighted-kernel-estimates} (see, for example, the analysis of the term $I^{1, \lambda, x''}_{k,+} (x^{\prime}, y^{\prime}) $ in \eqref{kernel-decom}) reveals that the proofs actually make use of the assumptions of the type \eqref{def:grushin-kernel-Maucheri-hormander-cond1-L0-N0-for-symbols} and \eqref{def:grushin-kernel-Maucheri-hormander-cond2-L0-N0-for-symbols} only. 

Now, in the particular case where the operator family $\Theta(x'', \lambda)$ is as in \eqref{def:theta-lambda-example-with-shifts}, we must work with conditions \eqref{def:grushin-kernel-Maucheri-hormander-cond1-L0-N0-for-symbols} and \eqref{def:grushin-kernel-Maucheri-hormander-cond2-L0-N0-for-symbols} only, otherwise the operators  $\delta^{\gamma_1}(\lambda) \bar{\delta}^{\gamma_2}(\lambda)$ would fall on the symbol function $m(x, \tau, \kappa)$ too, and thus producing large number of unwanted spatial derivatives of $m$. 

Summarising, with an understanding that Theorems \ref{thm:Mauceri-kernel-a=0-weak-11}, \ref{thm:Mauceri-kernel-a=0} and \ref{thm:Mauceri-kernel-a=0-more-derivative} hold true with conditions \eqref{def:grushin-kernel-Maucheri-hormander-cond1-L0-N0-for-symbols}, \eqref{def:grushin-kernel-Maucheri-hormander-cond2-L0-N0-for-symbols} in place of \eqref{def:grushin-kernel-Maucheri-hormander-cond1-L0-N0} and \eqref{def:grushin-kernel-Maucheri-hormander-cond2-L0-N0}, we have 
\begin{itemize}
\item Theorem \ref{thm:joint-shift-multi-weak-type} follows from Theorem \ref{thm:Mauceri-kernel-a=0-weak-11} if we take $L_0= Q +2$ and $N_0=0$.

\item Theorem \ref{thm:joint-shift-multi-sparse-less-derivative} follows from Theorem \ref{thm:Mauceri-kernel-a=0} if we take $L_0= \floor*{Q/2} + 3$ and $N_0=1$.

\item Theorem \ref{thm:joint-shift-multi-sparse-more-derivative} follows from Theorem \ref{thm:Mauceri-kernel-a=0-more-derivative} if we take $L_0= Q + 3$ and $N_0=1$. 
\end{itemize}

\medskip \noindent \textbf{\underline{$\Theta(x'', \lambda)$ satisfies the condition \eqref{def:grushin-kernel-Maucheri-hormander-cond1-L0-N0-for-symbols}}:} We shall show that if for some $L_0, N_0 \in \mathbb{N}$, the symbol function $m$ satisfies condition \eqref{assumption:decay-grushin-joint-symb} for all $|\alpha| + |\beta| + |\Gamma| \leq L_0$, $ |\Gamma| \leq  N_0$ and the cancellation condition \eqref{def:grushin-symb-vanishing-0-condition} for $|\beta'| \leq L_0$, then the family $\Theta(x'', \lambda)$ given by \eqref{def:theta-lambda-example-with-shifts} satisfies the condition \eqref{def:grushin-kernel-Maucheri-hormander-cond1-L0-N0-for-symbols}. 

With $\psi_j$ as in \eqref{def:spectral-break} and $\Upsilon^{q}$ given by \eqref{def:spectral-break-kappa-variable}, we define for every $j, q \in \mathbb{N}$ 
\begin{align*}
m^q_{j}(x, \tau, \kappa)=  m(x, \tau, \kappa) \, \psi_j(|(\tau, \kappa)|) \Upsilon^q (\kappa) = m_j(x, \tau, \kappa) \Upsilon^q (\kappa), 
\end{align*}
then in view of Lemma \ref{lem:finite-shifts} we have that 
\begin{align*}
\left( \widetilde{\mathcal{L}(\lambda)}_{\vartheta_1, \vartheta_2}^{\nu} {x'}^{\alpha_0} \partial^{\beta}_{x''} \Theta(x'', \lambda) \right) \chi_{j}(\lambda) = \sum_{|j - j'| \leq C L_0} \left( \widetilde{\mathcal{L}(\lambda)}_{\vartheta_1, \vartheta_2}^{ \nu} {x'}^{\alpha_0} \partial^{\beta}_{x''} \Theta_{j'} (x'', \lambda) \right) \chi_{j}(\lambda), 
\end{align*}
and therefore it suffices to work with $\left( \widetilde{\mathcal{L}(\lambda)}_{\vartheta_1, \vartheta_2}^{ \nu} {x'}^{\alpha_0} \partial^{\beta}_{x''} \Theta_{j} (x'', \lambda) \right) \chi_{j}(\lambda)$. 

Moreover, as already seen in Section \ref{subsec:mauceri-type-L2-weighted-kernel-estimates}, it suffices to prove estimates for $\Theta^q_{j}$ with bounds uniform in $q \in \mathbb{N}$. So, we pursue these estimates. 

Note that for $\tilde{c}$ coming from a fixed compact set, we have $|(2\mu + \tilde{c} |\lambda|, \lambda)| \sim |(2\mu + \tilde{c}) |\lambda| |$. Now, by Lemma \ref{weighted-kernel-estimate-3} and the orthogonality of (scaled) Hermite functions, the kernel of the operator 
$Op \left( \left( \widetilde{\mathcal{L}(\lambda)}_{\vartheta_1, \vartheta_2}^{ \nu} {x'}^{\alpha_0} \partial^{\beta}_{x''} \Theta_{j}^{q}(x'', \lambda) \right) \chi_{j}(\lambda)\right)$ can be written as finite linear combination of terms of the forms 
\begin{align} \label{eq:cancel-calc-kc1lo-1} 
& {x^{\prime}}^{\alpha_{1}} \int_{[0,1]^{N_1} \times \Omega^{N_2} \times [0,1]^{|\nu|}} g(\omega) \int_{\mathbb{R}^{n_2}} e^{-i \lambda \cdot (x^{\prime \prime} - y^{\prime \prime})} \mathfrak{A}_l(\lambda) |\lambda|^{-|\vartheta_2|} \, {x'}^{\alpha_0} \\ 
\nonumber & \quad \sum_{2^j \leq (2|\mu|+n_1)|\lambda| < 2^{j+1}} C_{\mu, \tilde{c}, \vec{c}} \left( \tau^{\frac{1}{2} \theta_1} \partial_\tau^{\theta_2} \partial_{\kappa}^{\beta_2} \partial^{\beta}_{x''} m_j^q\right) (x,(2\mu + \tilde{c} + \vec{c}(\omega)) |\lambda|,\lambda) \Phi_{\mu + \tilde{\mu}}^{\lambda}(x^{\prime}) \Phi_{\mu}^{\lambda}(y^{\prime}) \, d \lambda \, d\omega, 
\end{align} 
where $N_1, N_2 \leq |\vartheta_1| - (|\beta_2| + l)$, $| \alpha_1 | \leq |\vartheta_1| - (|\beta_2|+l)$, $ |\mathcal{\theta}|_1 \leq |\mathcal{\theta}_2| \leq |\nu| + 2 |\vartheta_1| - 2  (|\beta_2|+l)$, $|\mathcal{\theta}_2| - \frac{|\mathcal{\theta}_1|}{2} = \frac{|\nu|}{2} + |\vartheta_1| -  (|\beta_2|+l)- \frac{|\alpha_1|}{2} $, $C_{\mu, \tilde{c}, \vec{c}}$ is a bounded function of $\mu$ and $\vec{c}$, and $\mathfrak{A}_l$ is a continuous function on $\mathbb{R}^{n_2} \setminus \{0\}$ which is homogeneous of degree $-l$.

By Leibniz formula in $\kappa$ variable, for $\beta_3 \leq \beta_2$, it suffices to consider the kernel of the form 
\begin{align*} 
& {x^{\prime}}^{\alpha_{1}}\int_{[0,1]^{N_1} \times \Omega^{N_2} \times [0,1]^{|\nu|}} g(\omega) \int_{\mathbb{R}^{n_2}} e^{-i \lambda \cdot (x^{\prime \prime} - y^{\prime \prime})} \mathfrak{A}_l(\lambda) |\lambda|^{-|\vartheta_2|} \partial^{\beta_2-\beta_3}_{\lambda} \Upsilon^{q}(\lambda)\\
\nonumber & \sum_{2^j \leq (2|\mu|+n_1)|\lambda| < 2^{j+1}} {x'}^{\alpha_0} C_{\mu, \tilde{c}, \vec{c}} \left(\tau^{ \frac{1}{2} \theta_1} \partial_\tau^{\theta_2}\partial_{\kappa}^{\beta_3} \partial^{\beta}_{x''} m_j\right) (x,(2\mu + \tilde{c} + \vec{c}(\omega)) |\lambda|,\lambda) \Phi_{\mu + \tilde{\mu}}^{\lambda}(x^{\prime}) \Phi_{\mu}^{\lambda}(y^{\prime}) \, d \lambda \, d\omega.
\end{align*}

From here on we drop the integration on the compact set $[0,1]^{N_1} \times \Omega^{N_2} \times [0,1]^{|\nu|}$ as it will become clear from the calculations that the estimate that we get are uniform in $\omega$ (for fixed $N_1$, $N_2$ and $\nu$). Now, 
\begin{align}\label{Pseudo-shift-first-condition} 
& \int_{\mathfrak{E}_{j} (x^{\prime})} \int_{\mathbb{R}^{n_2}}
\left| \frac{ \left( \left( \widetilde{\mathcal{L}(\lambda)}_{\vartheta_1, \vartheta_2}^{ \nu} {x'}^{\alpha_0} \partial^{\beta_0}_{x''} \Theta^q_j (x'', \lambda) \right) \chi_{j}(\lambda)\right)(x^{\prime}, y^{\prime})}{(|x^{\prime}|+|y^{\prime}|)^{|\vartheta_1 + \vartheta_2|}} \right|^2 d\lambda \, dy^{\prime} \\ 
\nonumber & \lesssim 2^{j(|\vartheta_1 + \vartheta_2|-|\alpha_1|)}  \int_{\mathbb{R}^{n_2}}
\left| \mathfrak{A}_l(\lambda)\right|^{2}  |\lambda|^{-2|\vartheta_2|} \left| \partial^{\beta_2 - \beta_3}_{\lambda} \Upsilon^{q}(\lambda)\right|^2 \\
\nonumber & \quad \quad \quad \quad \quad \quad  \sum_{\mu} \left| {x'}^{\alpha_0} \left(\tau^{ \frac{1}{2} \theta_1}  \partial_\tau^{\theta_2}\partial_{\kappa}^{\beta_3}  \partial^{\beta_0}_{x''} m_j\right) (x,(2\mu + \tilde{c} + \vec{c}(\omega)) |\lambda|,\lambda)  \right|^2 \left| \Phi_{\mu + \tilde{\mu}}^{\lambda}(x^{\prime}) \right|^2 d\lambda. 
\end{align}
Using condition \eqref{def:grushin-symb-vanishing-0-condition}, we get as an application of Taylor's theorem that 
$$ \partial_\tau^{\theta_2} \partial_{\kappa}^{\beta_3}\partial^{\beta_0}_{x''} m_j (x, \tau,\kappa) = \sum_{\tilde{\beta}_3 \geq \beta_3 : |\tilde{\beta}_3| = |\beta_2| + l + |\vartheta_2|} \frac{ \kappa^{\tilde{\beta}_3 - \beta_3} }{ \left( \tilde{\beta}_3 - \beta_3 \right)!} \int_0^1 \partial_\tau^{\theta_2} \partial_{\kappa}^{\tilde{\beta}_3} \partial^{\beta_0}_{x''} m_j (x, \tau, t\kappa) \, dt, $$
which implies that 
\begin{align*}
\left| \partial_\tau^{\theta_2} \partial_{\kappa}^{\beta_3}\partial^{\beta_0}_{x''} m_j (x, \tau,\kappa) \right|^2 \lesssim \left( |\kappa|^{|\beta_2| + l+ |\vartheta_2| - |\beta_3|} \right)^2  \sum_{\tilde{\beta}_3 \geq \beta_3 : |\tilde{\beta}_3| = |\beta_2| + l+ |\vartheta_2|}   \int_0^1 \left|  \partial_\tau^{\theta_2}\partial_{\kappa}^{\tilde{\beta}_3} \partial^{\beta_0}_{x''} m_j (x, \tau, t\kappa) \right|^2 \, dt.
\end{align*}

Now, since 
$$ \sup_{q \in \mathbb{N}} \sup_{\kappa \in \mathbb{R}^{n_2} \setminus \{0\}} |\kappa|^{|\beta_2| + l - |\beta_3|} \left| \mathfrak{A}_l(\kappa) \right| \left| \partial^{\beta_2-\beta_3}_{\kappa} \Upsilon^{q}(\kappa) \right| \leq C_{\beta_2, \beta_3, l} < \infty,$$ 
therefore \eqref{Pseudo-shift-first-condition} is dominated by $2^{j(|\vartheta_1 + \vartheta_2|-|\alpha_1|)}$ times a finite sum of terms of the form 
\begin{align*}
& \int_0^1 \int_{\mathbb{R}^{n_2}} \sum_{\mu} \left| \left(\tau^{ \frac{1}{2} \theta_1} \partial_\tau^{\theta_2}\partial_{\kappa}^{\tilde{\beta}_3} \left( {x'}^{\alpha_0} \partial^{\beta_0}_{x''} m_j \right) \right) (x,(2 \mu + \tilde{c} + \vec{c}(\omega)) |\lambda|, t \lambda) 
\right|^2 \left| \Phi_{\mu + \tilde{\mu}}^{\lambda}(x^{\prime}) \right|^2 d\lambda \, dt \\
& = \int_0^1 \int_{\mathbb{R}^{n_2}} \sum_{\mu} \left| \left(\tau^{ \frac{1}{2} \theta_1} \partial_\tau^{\theta_2}\partial_{\kappa}^{\tilde{\beta}_3} \left( {x'}^{\alpha_0} \partial^{\beta_0}_{x''} m_j \right) \right) (x,(2 \mu - 2 \tilde{\mu} + \tilde{c} + \vec{c}(\omega)) |\lambda|, t \lambda) 
\right|^2 \left| \Phi_{\mu }^{\lambda}(x^{\prime}) \right|^2 d\lambda \, dt \\
& \lesssim \int_{\mathbb{R}^{n_2}} \sum_{\mu} \left| \sup_{t \in [0,1]} \left| \left(\tau^{ \frac{1}{2} \theta_1} \partial_\tau^{\theta_2} \partial_{\kappa}^{\tilde{\beta}_3} \left( {x'}^{\alpha_0} \partial^{\beta_0}_{x''} m_j \right) \right) (x,(2 \mu - 2 \tilde{\mu} + \tilde{c} + \vec{c}(\omega)) |\lambda|, t \lambda) \right| \right|^2 \left| \Phi_{\mu }^{\lambda}(x^{\prime}) \right|^2 d\lambda \\ 
& \lesssim \int_{\mathbb{R}^{n_2}} \sum_{\mu} \left| \sup_{\kappa \in \mathbb{R}^{n_2}} \left| \left(\tau^{ \frac{1}{2} \theta_1} \partial_\tau^{\theta_2} \partial_{\kappa}^{\tilde{\beta}_3} \left( {x'}^{\alpha_0} \partial^{\beta_0}_{x''} m_j \right) \right) (x,(2 \mu - 2 \tilde{\mu} + \tilde{c} + \vec{c}(\omega)) |\lambda|, \kappa) \right| \right|^2 \left| \Phi_{\mu }^{\lambda}(x^{\prime}) \right|^2 d\lambda \\ 
& \lesssim \sup_{x_0} \int_{\mathbb{R}^{n_2}} \sum_{\mu} \left| \widetilde{m}_j^{\theta_1, \theta_2, \tilde{\beta}_3, \beta_0, \tilde{\alpha}} (x_0, (2 \mu - 2 \tilde{\mu} + \tilde{c} + \vec{c}(\omega)) |\lambda|) \right|^2 \left| \Phi_{\mu }^{\lambda}(x^{\prime}) \right|^2 d\lambda, 
\end{align*}
where in the final line we have taken 
$$ \widetilde{m}_j^{\theta_1, \theta_2, \tilde{\beta}_3, \beta_0,\tilde{\alpha}} (x_0, \tau) = \left| \sup_{\kappa \in \mathbb{R}^{n_2}} \left| \left(\tau^{ \frac{1}{2} \theta_1} \partial_\tau^{\theta_2} \partial_{\kappa}^{\tilde{\beta}_3} \left( {x'}^{\alpha_0} \partial^{\beta_0}_{x''} m_j \right) \right) (x_0, \tau, \kappa) \right| \right|.$$

It is clear that as a function of $\tau$-variable, support of $\widetilde{m}_j^{\theta_1, \theta_2, \tilde{\beta}_3, \beta_0,\tilde{\alpha}} (x_0, \tau) $ is contained in $[-2^{j+2}, 2^{j+2}]^{n_1}$. With that, we can make use of the calculations of Lemmas 10 and 11 of \cite{MartiniMullerGrushinRevistaMath} where the authors worked with estimates of Hermite functions to handle shifts of the form appearing in our case too, and then we can conclude from the final estimate above that \eqref{Pseudo-shift-first-condition} is dominated by a finite sum of terms of the form 
\begin{align*}
& |B(x, 2^{-j/2})|^{-1} 2^{j(|\vartheta_1 + \vartheta_2|-|\alpha_1|)} \sup_{x_0 \in \mathbb{R}^{n_1 + n_2}} \left\| \widetilde{m}_j^{\theta_1, \theta_2, \tilde{\beta}_3, \beta_0, \tilde{\alpha}} (x_0, \cdot) \right\|^2_{L^{\infty}} \\
& \lesssim |B(x, 2^{-j/2})|^{-1} 2^{j(|\vartheta_1 + \vartheta_2|-|\alpha_1|)} \sup_{x_0 \in \mathbb{R}^{n_1 + n_2}} \left\| \sup_{\kappa \in \mathbb{R}^{n_2}} \left| \left(\tau^{ \frac{1}{2} \theta_1} \partial_\tau^{\theta_2} \partial_{\kappa}^{\tilde{\beta}_3} \left( {x'}^{\alpha_0} \partial^{\beta_0}_{x''} m_j \right) \right) (x_0, \tau, \kappa) \right| \right\|^2_{L^{\infty} (d \tau)} \\ 
& \lesssim |B(x, 2^{-j/2})|^{-1} 2^{-j(|\nu| + |\vartheta_1 + \vartheta_2|)} 2^{ j |\beta_0|} 
\end{align*}
where the last inequality follows from the conditions of \eqref{eq:cancel-calc-kc1lo-1}. 

This completes the proof of our claim on $\Theta(x'', \lambda)$ satisfying condition \eqref{def:grushin-kernel-Maucheri-hormander-cond1-L0-N0-for-symbols}. 

\medskip \noindent \textbf{\underline{$\Theta(x'', \lambda)$ satisfies the condition \eqref{def:grushin-kernel-Maucheri-hormander-cond2-L0-N0-for-symbols}}:}
In an analogous manner, one can show that if for some $L_0, N_0 \in \mathbb{N}$, the symbol function $m$ satisfies condition \eqref{assumption:decay-grushin-joint-symb} for all $|\alpha| + |\beta| + |\Gamma| \leq L_0$, $ |\Gamma| \leq  N_0$ and the cancellation condition \eqref{def:grushin-symb-vanishing-0-condition} for $|\beta'| \leq L_0$, then the family $\Theta(x'', \lambda)$ satisfies the condition \eqref{def:grushin-kernel-Maucheri-hormander-cond2-L0-N0-for-symbols}, so we omit the details. 

\section*{Acknowledgements} 
SB and RG were supported in parts from their individual INSPIRE Faculty Fellowships from DST, Government of India. RB was supported by the Senior Research Fellowship from CSIR, Government of India. AG was supported by Centre for Applicable Mathematics, TIFR.

\providecommand{\bysame}{\leavevmode\hbox to3em{\hrulefill}\thinspace}
\providecommand{\MR}{\relax\ifhmode\unskip\space\fi MR }
\providecommand{\MRhref}[2]{%
  \href{http://www.ams.org/mathscinet-getitem?mr=#1}{#2}
}
\providecommand{\href}[2]{#2}

\end{document}